\title[Genus two Heegaard splittings for non-prime 3-manifolds]
{Disk complexes and genus two Heegaard splittings for non-prime 3-manifolds}
\author{Sangbum Cho}
\thanks{The first-named author is supported by Basic Science Research Program through the National Research Foundation of Korea (NRF) funded by the Ministry of Education, Science and Technology (2012006520).}
\address{Department of Mathematics Education, Hanyang University, Seoul 133-791,
Korea}
\email{scho@hanyang.ac.kr}
\author{Yuya Koda}
\thanks{The second-named author is supported by
Japan Society for Promotion of Science (JSPS) Postdoctoral Fellowships for Research Abroad.}
\address{
Mathematical Institute \newline
\indent Tohoku University, Sendai, 980-8578, Japan \newline
\indent and \newline
\indent (Temporary) Dipartimento di Matematica  \newline
\indent Universit\`{a} di Pisa, Largo Bruno Pontecorvo 5, 56127 Pisa, Italy}
\email{koda@math.tohoku.ac.jp}
\theoremstyle{plain}
\newtheorem*{theorem*}{Theorem}
\newtheorem*{lemma*} {Lemma}
\newtheorem*{corollary*} {Corollary}
\newtheorem*{proposition*}{Proposition}
\newtheorem*{conjecture*}{Conjecture}
\newtheorem{theorem}{Theorem}[section]
\newtheorem{lemma}[theorem]{Lemma}
\theoremstyle{remark}
\theoremstyle{definition}
\newcommand{\Natural}{\mathbb{N}}
\newcommand{\Integer}{\mathbb{Z}}
\newcommand{\Rational}{\mathbb{Q}}
\begin{document}

\maketitle

\begin{abstract}
Given a genus two Heegaard splitting for a non-prime 3-manifold,
we define a special subcomplex of the disk complex for one of the handlebodies of the splitting,
and then show that it is contractible.
As applications, first we show that the complex of Haken spheres for the splitting is contractible,
which refines the results of Lei and Lei-Zhang.
Secondly, we classify all the genus two Heegaard splittings for non-prime 3-manifolds,
which is a generalization of the result of Montesinos-Safont.
Finally, we show that the mapping class group of the splitting, called the Goeritz group,
is finitely presented by giving its explicit presentation.
\end{abstract}

\vspace{1em}

\begin{small}
\hspace{2em}  \textbf{2010 Mathematics Subject Classification}:
57N10; 57M60


\hspace{2em}
\textbf{Keywords}:
mapping class group; Heegaard splitting; group action; tree
\end{small}

\section*{Introduction}

Every closed orientable 3-manifold $M$ can be decomposed into
two handlebodies $V$ and $W$ by cutting $M$ along a closed orientable surface $\Sigma$ embedded in it.
This is called a {\it Heegaard splitting} for the manifold $M$,
and denoted by the triple $(V, W; \Sigma)$.
The surface $\Sigma$ is called a {\it Heegaard surface}
and its genus is called the {\it genus} of the splitting.
A separating $2$-sphere $P$ in $M$ is called a
 {\it Haken sphere} for the splitting $(V, W; \Sigma)$
if $P$ intersects the Heegaard surface $\Sigma$ in a single essential circle.
If $(V, W; \Sigma)$ is a genus two Heegaard splitting for $M$
that admits a Haken sphere, then $M$ is one of the $3$-sphere,
$S^2 \times S^1$, lens spaces or their connected sums.
In particular, if the manifold $M$ is non-prime,
then $M$ is a connected sum whose summands are lens spaces or $S^2 \times S^1$.

In this paper, we study the genus two Heegaard splittings for non-prime $3$-manifolds.
Given a genus two Heegaard splitting $(V, W; \Sigma)$
for a closed orientable non-prime $3$-manifold $M$,
we define a special subcomplex of the disk complex for each of the handlebodies $V$ and $W$,
which we will call the semi-primitive disk complex, and then show that it is contractible.
The semi-primitive disk complex is an analogue of the primitive disk complexes studied in the authors' previous works \cite{Cho08, Cho12, CK12, CK13a, CK13b, Kod11} to find presentations of certain kinds of mapping class groups, including some Goeritz groups.

Understanding the structure of the semi-primitive disk complexes with their properties, we produce several applications.
First, we prove that the complex of Haken spheres is contractible for the genus two Heegaard splitting for any non-prime $3$-manifold.
The complex of Haken spheres is the simplicial complex whose vertices are isotopy classes of Haken spheres, and it has been an interesting problem to understand the structure of it since Scharlemann \cite{Sch04} showed that the complex for the genus two Heegaard splitting for the $3$-sphere is connected.
In Lei \cite{Lei05} and Lei-Zhang \cite{LZ04}, it was shown that the complexes of Haken spheres are connected for genus two Heegaard splittings for non-prime $3$-manifolds.
In Theorem \ref{cor:contractibility of sphere complexes} in this work, we refine their results in an alternative way, showing that those complexes are actually contractible.

Secondly, we classify all the genus two Heegaard splittings for non-prime $3$-manifolds.
Indeed, any non-prime $3$-manifold $M$ admits at most two different genus two Heegaard splittings, and it is known from Montesinos-Safont \cite{MS88} that,
if $M$ is the connected sum of two lens spaces $L(p, q_1)$ and $L(p, q_2)$,
then there exists a unique genus two Heegaard surface for $M$ up to homeomorphism
if and only if ${q_1}^2 \equiv 1$ or ${q_2}^2 \equiv 1 \pmod p$.
Including this result, we determine all the non-prime $3$-manifolds that admit unique Heegaard surfaces up to homeomorphism, which is stated in Theorem \ref{thm:Generalization of Montesinos-Safont theorem}.

The final application is to obtain a presentation of the mapping class group
of a genus two Heegaard splitting for a non-Haken $3$-manifold,
using the semi-primitive disk complex.
Such a group is called a (genus two) Goeritz group.
Precisely, the {\it Goeritz group} of a Heegaard splitting $(V, W; \Sigma)$
for a manifold $M$ is the group of
isotopy classes of orientation-preserving homeomorphisms of $M$ that preserve $V$ and $W$ setwise.
In Theorem \ref{thm:presentations of the Goeritz groups} in this work,
we show that the genus two Goeritz groups
for any non-prime $3$-manifolds are all finitely presented by giving their explicit presentations.

The Goeritz groups have been interesting objects in the study of Heegaard splittings.
For example, some interesting questions on Goeritz groups were proposed by Minsky in \cite{Go07}.
A Goeritz group will be ``small" when the gluing map of the two handlebodies
that defines the Heegaard splitting is sufficiently complicated.
Indeed, Namazi \cite{Nam07} showed that the Goeritz group is actually
a finite group when the Heegaard splitting has ``high'' {\it Hempel distance}.
Here, we just simply mention that the Hempel distance
is a measure of complexity of the gluing map that defines the splitting.
We refer to \cite{Hem01} for its precise definition.
Namazi's result is improved by Johnson in \cite{Joh10} showing
that the Goeritz group is finite if the Hempel distance of the splitting is at least four.
We refer the reader to \cite{Joh11, Joh12} for related topics.
The Goeritz groups of Heegaard splittings of low Hempel distance
are not as ``small" as in the case of the high Hempel distance.

For example, it is easy to see that the Goeritz group of
the genus $g$ Heegaard splitting for $\#_g(S^2 \times S^1)$,
which is the double of the genus $g$ handlebody $V$, is isomorphic to the mapping class group of $V$.
We note that the Hempel distance of this splitting is zero.
The mapping class group of a handlebody of genus at least two is, of course, not finite.
A finite generating set of this group is obtained by Suzuki \cite{Suz77} and its finite presentation is obtained by Grasse \cite{Gra89} and Wajnryb \cite{Waj98} independently.
See also \cite{McC91, Joh95}.

It is natural to ask if a given Goeritz group is finitely generated or presented, and so finding a generating set or a presentation of it has been an important problem.
But beyond the case of $\#_g(S^2 \times S^1)$, the generating sets or the presentations of the groups have been obtained only for few manifolds with their splittings of small genus.
In the case of the $3$-sphere, it is known that the Goeritz group for the genus two splitting is finitely presented from the works \cite{Goe33, Sch04, Akb08, Cho08}.
Further, a finite presentation of the Goeritz group of the genus two Heegaard splitting is obtained for each of the lens spaces $L(p, 1)$ in \cite{Cho12} and $S^2 \times S^1$ in \cite{CK13a}.
In addition, finite presentations of the genus two Goeritz groups of some other lens spaces are given in \cite{CK13b}.
For the higher genus Goeritz groups of the $3$-sphere and lens spaces, it is conjectured that they are all finitely presented but 
it is still an open problem.

This paper is organized as follows.
In Sections \ref{sec:Semi-primitive disks} and \ref{sec:The complex of semi-primitive disks},
we introduce semi-primitive disks with their various properties,
and then show that the semi-primitive disk complexes are contractible, by giving an explicit description of them.
In Section \ref{sec:The complex of Haken spheres}, the complex of Haken spheres are shown to be contractible
(Theorem \ref{cor:contractibility of sphere complexes}),
and in Section \ref{sec:Classification of genus two Heegaard splittings},
we give a classification of the genus two Heegaard splittings for non-prime $3$-manifolds (Theorem \ref{thm:Generalization of Montesinos-Safont theorem}).
In the final section, a finite presentation is given for the Goeritz group of each non-prime $3$-manifold with its genus two Heegaard splitting (Theorem \ref{thm:presentations of the Goeritz groups}).

\medskip\

By disks, pairs of disks, triples of disks properly embedded in a handlebody, we often mean
their isotopy classes throughout the paper.
Also, we often speak of Haken spheres of a Heegaard splitting
to mean their isotopy classes preserving the Heegaard splitting.
When we choose representatives of their isotopy classes,
we assume implicitly that
they intersect each other minimally and transversely.
Moreover, by homeomorphisms we often mean their isotopy classes when
it is obvious from context.

We use the standard notation of lens spaces as follows.
Let $V$ and $W$ be oriented solid tori.
Let $(m,l)$ be the pair of a meridian and a longitude of $V$.
We orient $m$ and $l$ in such a way that
the pair $(m,l)$ yields the orientation of
$\partial V$ induced by that of $V$.
The homology classes $[m]$ and $[l]$
of $m$ and $l$ induce a basis
of $H_1(\partial V)$.
In the same manner, we have the pair $(m', l')$ of a meridian and a longitude of $W$.
The lens space $L(p,q)$ is a 3-manifold obtained by identifying the boundaries of
$V$ and $W$ using an orientation-reversing
homeomorphism $\varphi : \partial V \to \partial W$ that
induces an isomorphism $\varphi_* : H_1(\partial V) \to H_1(\partial W)$
represented by $\left( \begin{array}{cc} q & p \\ s & -r \end{array} \right)$,
where $qr + ps = 1$.
In particular, $\varphi$ maps $m'$ to a $(p,q)$-curve with respect to $(m,l)$ on $\partial V$,
that is,
$\varphi_* [m'] = p [l] + q[m]$ in $H_1(\partial V)$.
We note that the image of $m$ by $\varphi^{-1}$ is a $(p, r)$-curve
with respect to $(m',l')$ on $\partial W$.
By definition, a lens space is equipped with a canonical orientation
induced from those of $V$ and $W$.
This orientation induces a canonical orientation of
the connected sum of two lens spaces.
Throughout the paper,
we will not regard $S^3 = L(1,0)$ nor $S^2 \times S^1 = L(0,1)$
as lens spaces.

\section{Semi-primitive disks}
\label{sec:Semi-primitive disks}

{An element of a free group $\mathbb Z \ast \mathbb Z$ of rank $2$ is said to be {\it primitive}
if it is a member of a generating pair of the group.
Primitive elements of $\mathbb Z \ast \mathbb Z$ have been well-understood.
For example, we refer the reader to \cite{OZ81}.
A key property of the primitive elements is that, fixing a generating pair $\{x, y\}$ of $\mathbb Z \ast \mathbb Z$, any primitive element has a cyclically reduced form which is a product of terms each of the form $x^\epsilon y^n$ and $x^\epsilon y^{n+1}$, or else a product of terms each of the form $y^\epsilon x^n$ and $y^\epsilon x^{n+1}$, for some $\epsilon \in \{1,-1\}$ and some $n \in \mathbb Z$.
The following is a direct consequence of this property.

\begin{lemma}
\label{lem:necessity condition to be a power of a primitive element}
Fix a generating pair $\{x , y\}$ of $\Integer * \Integer$.
Let $w$ be a cyclically reduced word on $\{x, y\}$.
If $w$ contains both $x$ and $x^{-1}$, both $y$ and $y^{-1}$ or
both $x^{\pm2}$ and $y^{\pm2}$ simultaneously,
then the element represented by $w$ is neither trivial nor a power of a primitive element.
\end{lemma}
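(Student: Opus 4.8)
The plan is to prove the contrapositive: if the element $[w]$ represented by $w$ is trivial or a power of a primitive element, then $w$ contains neither both $x$ and $x^{-1}$, nor both $y$ and $y^{-1}$, nor both $x^{\pm 2}$ and $y^{\pm 2}$ simultaneously. The trivial case is immediate, since the only cyclically reduced word representing the identity is the empty word, which contains no letters at all. So suppose $[w] = p^{k}$ for some primitive element $p$ and some $k \neq 0$; replacing $p$ by $p^{-1}$ if necessary (the inverse of a primitive element is again primitive), we may assume $k \ge 1$. By the structural property of primitives quoted above, $p$ is conjugate to a cyclically reduced word $\tilde{p}$ which, after possibly interchanging the roles of $x$ and $y$, is a product of blocks each of the form $x^{\epsilon} y^{n}$ or $x^{\epsilon} y^{n+1}$ for a fixed $\epsilon \in \{1, -1\}$ and a fixed $n \in \Integer$. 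Then $\tilde{p}^{k}$ is cyclically reduced and conjugate to $[w]$, so, using the standard fact that two cyclically reduced words are conjugate in a free group if and only if they are cyclic permutations of one another, $w$ is a cyclic permutation of $\tilde{p}^{k}$. Since $\tilde{p}^{k}$ is again a product of blocks of these two forms, it suffices to verify that no cyclic permutation of such a product can contain any of the three forbidden configurations.

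For the first two configurations I would argue at the level of the multiset of letters, which is plainly invariant under cyclic permutation. In a product of blocks $x^{\epsilon} y^{n_i}$ with each $n_i \in \{n, n+1\}$, every occurrence of $x$ carries the same exponent $\epsilon$, so $x$ and $x^{-1}$ never appear together. Moreover the two admissible exponents $n$ and $n+1$ are consecutive integers and therefore cannot have opposite signs, so all occurrences of $y$ have the same sign as well, ruling out the simultaneous appearance of $y$ and $y^{-1}$.

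The remaining configuration is the delicate one, and handling it is the crux of the argument. Reading $\tilde{p}^{k}$ as a cyclic word, a subword $x^{\pm 2}$ occurs exactly when two $x$-letters become adjacent, which happens precisely when some block has $y$-exponent $0$; this forces $0 \in \{n, n+1\}$, that is, $n \in \{-1, 0\}$. But in that case every admissible exponent lies in $\{-1, 0, 1\}$, so no block contains $y^{\pm 2}$, and hence $x^{\pm 2}$ and $y^{\pm 2}$ cannot occur together. The same dichotomy holds, with the roles of $x$ and $y$ exchanged, for the other family of special forms. Since all of these properties are unaffected by passing to a cyclic permutation, they transfer to $w$, completing the contrapositive. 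The main obstacle is thus not any single computation but rather setting up the reduction correctly — passing from an arbitrary power of a primitive to its special cyclically reduced form through conjugacy invariance, and checking that each forbidden configuration is already detectable at the level of the cyclic word — after which the mutual exclusivity of $x^{\pm 2}$ and $y^{\pm 2}$ falls out of the single constraint $n \in \{-1, 0\}$.
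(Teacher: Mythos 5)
Your proof is correct and follows essentially the same route as the paper, which states the lemma as a direct consequence of the quoted Osborne--Zieschang structure of primitive elements; you have simply filled in the details (powers of cyclically reduced words are cyclically reduced, conjugate cyclically reduced words are cyclic permutations of one another, and the case check on the blocks $x^{\epsilon}y^{n_i}$ with $n_i\in\{n,n+1\}$). In particular, your key observation that an occurrence of $x^{\pm2}$ forces $n\in\{-1,0\}$ and hence excludes $y^{\pm2}$ is exactly the point the paper leaves implicit.
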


Let $V$ be a genus two handlebody, and let $D$ and $E$ be
disjoint disks in $V$ such that $D \cup E$ cuts $V$ into a 3-ball.
We fix an orientation on each of $\partial D$ and $\partial E$, and then assign letters $x$ and $y$ to $\partial D$ and $\partial E$ respectively.
Let $l$ be an oriented simple closed curve on $\partial V$ which intersects
$\partial D \cup \partial E$ minimally and transversely.
Then $l$ determines a word on $\{x, y\}$ that can be read off
by the intersections of $l$ with $\partial D$ and $\partial E$.
We note that this word is well-defined up to cyclic conjugation.
The following is a simple criterion for triviality and primitiveness of the elements represented by $l$, which can be considered as a simpler version of Lemma 2.3 in \cite{CK12}.

\begin{lemma}
\label{lem:non-triviality and non-primitivity}
In the above setting, if a word $w$ determined by
the simple closed curve $l$ contains a subword
of the form $x y^p x^{-1}$ for some $p \in \Natural$, or $x^2 y^2$,
then any word determined by $l$ is cyclically reduced.
Moreover, the element represented by $w$ is neither trivial nor a power of a primitive element.
\end{lemma}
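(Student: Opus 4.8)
The statement packages a reducedness claim with an algebraic one, and the plan is to prove cyclic reducedness first and then invoke Lemma~\ref{lem:necessity condition to be a power of a primitive element}. Indeed, once $w$ is known to be cyclically reduced the second assertion is immediate: a subword $xy^px^{-1}$ exhibits both $x$ and $x^{-1}$ in $w$, and a subword $x^2y^2$ exhibits both $x^{\pm2}$ and $y^{\pm2}$, so in either case one of the hypotheses of Lemma~\ref{lem:necessity condition to be a power of a primitive element} holds and the represented element is neither trivial nor a power of a primitive element. Everything therefore reduces to the claim that $w$ is cyclically reduced.

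To see this I would cut $\partial V$ along $\partial D\cup\partial E$. As $D\cup E$ splits $V$ into a ball, the result is a four-holed sphere $S$ bounded by two copies $D^{+},D^{-}$ of $\partial D$ and two copies $E^{+},E^{-}$ of $\partial E$, and $l$ meets $S$ in disjoint arcs, each essential because $l$ is in minimal position (a boundary-parallel arc would be a clean bigon). Reading the word off these arcs, a cyclic backtrack $x^{\epsilon}x^{-\epsilon}$, respectively $y^{\epsilon}y^{-\epsilon}$, occurs precisely when some arc has both endpoints on one copy of $\partial D$, respectively of $\partial E$; call such an arc a returning arc. Thus $w$ fails to be cyclically reduced if and only if $l$ has a returning arc, which, being essential, separates $S$ into an annulus containing exactly one of the remaining three circles and a pair of pants containing the other two.

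Assuming a returning arc exists, I would derive a contradiction from the distinguished subword by two mechanisms. Since no arc of $l$ may cross the separating returning arc, a through arc---one joining $D^{+}$ to $D^{-}$, as produced by a subword $x^{\pm2}$, or $E^{+}$ to $E^{-}$, as produced by $y^{\pm2}$---cannot have its endpoints on the two different sides; the same applies to the mixed arcs coming from $xy$ and $yx^{-1}$. Separately, when the annulus side carries the opposite copy of the same handle on which the return occurs, all arcs on that side are spanning arcs, and a boundary count then traps $l$ on the annulus side so that it never reaches the other handle---impossible as soon as $w$ contains letters from both handles. For the subword $x^2y^2$ these two mechanisms settle every position of the returning arc, because $x^2y^2$ furnishes a through arc of each type together with both an $x$-letter and a $y$-letter.

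The harder case, which I expect to be the main obstacle, is $xy^px^{-1}$, and especially $p=1$: this word provides the two mixed arcs and letters on both handles, but no through arc, so one residual family of configurations---a return on one handle whose annulus side carries a copy of the \emph{other} handle---evades both mechanisms. Excluding it calls for the finer analysis of how the two mixed arcs, the returning arc, and the requirement that $l$ close up to a single embedded circle must fit together inside the pair of pants; this is where the simple-closed-curve hypothesis is genuinely used, and it is the analogue in this simpler setting of Lemma~2.3 of \cite{CK12}. With that configuration ruled out, $l$ has no returning arc, $w$ is cyclically reduced, and the proof concludes.
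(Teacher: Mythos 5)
Your overall plan is the paper's: prove cyclic reducedness first and then quote Lemma \ref{lem:necessity condition to be a power of a primitive element}, and your geometric setup (cut along $\partial D\cup\partial E$, identify failures of reducedness with returning arcs, use minimality to exclude the boundary-parallel ones) is the intended machinery, the ``simpler version'' of Lemma 2.3 of \cite{CK12} that the paper alludes to. Your two mechanisms are sound, and your treatment of $x^2y^2$ is complete. The genuine gap is exactly the case you concede: for $xy^px^{-1}$ you leave unproved the configurations in which the returning arc is based on a copy of one curve and its complementary annulus contains a copy of the \emph{other} curve, and ``calls for finer analysis'' is not an argument. This is not a peripheral detail --- it is the entire content of the reducedness claim for the subword the paper actually uses most. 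Moreover, nothing you have established can close it: the cyclic word $xyx^{-1}\cdot y^{-1}y\cdot y^{-1}y$ contains your subword $xyx^{-1}$, is not cyclically reduced, and every one of its cancellations corresponds to a returning arc based on a copy of $\partial E$ whose lone circle is $D^-$ (the copy of $\partial D$ not met by the two mixed arcs), a type that survives both of your mechanisms. So no argument at the level of a single returning arc plus separation of the subword's arcs can finish; ruling these configurations out genuinely needs a global constraint.

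The missing step can be supplied by pushing your boundary count from mechanism (b) across all four boundary circles at once. Suppose a returning arc $\gamma$ based at $E^+$ cuts off $D^-$. The annulus argument (the engine of your mechanism (b)) shows every point of $l\cap\partial D$ has its $D^-$ end on an arc $D^-\to E^+$; in particular there are no arcs $D^-\to E^-$ and, by mechanism (a), no returning arcs based at $D^-$. Writing $|l\cap\partial D|$ as a sum over arc types at $D^+$ and at $D^-$, and $|l\cap\partial E|$ as a sum over arc types at $E^+$ and at $E^-$, and eliminating, one gets $\#\{\text{returning arcs at }E^-\}=\#\{D^+\to E^+\}+\#\{\text{returning arcs at }D^+\}+\#\{\text{returning arcs at }E^+\}\geq 2$, since the mixed arc $a_2\colon D^+\to E^+$ and $\gamma$ itself each contribute $1$. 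But a returning arc at $E^-$ surviving mechanisms (a) (which uses $a_2$) and (b) must have lone circle $D^-$, forcing every point of $l\cap\partial D$ to have its $D^-$ end on an arc $D^-\to E^-$ --- contradicting the first sentence. The symmetric counts dispose of the remaining configurations (base $E^-$, and base $D^+$ when $p=1$, the latter using $a_1$ or $a_2$ in place of the through arcs). Without this closing argument, or an equivalent one, your proposal proves the lemma only for $x^2y^2$. For comparison: the paper itself records only the assertion that the subwords preclude any cancellation, deferring details to the method of \cite{CK12}; your outline is the right strategy, but the step you defer is precisely what the lemma asserts.
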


The idea of the proof is that, if $w$ contains one of those subwords, then any word determined by $l$ cannot contain $x^{\pm1}x^{\mp1}$ and $y^{\pm1}y^{\mp1}$, and any cyclically reduced word containing both $x$ and $x^{-1}$ or both $x^2$ and $y^2$ cannot represent a power of a primitive element by Lemma \ref{lem:necessity condition to be a power of a primitive element}.

Let $(V, W; \Sigma)$ be a genus two Heegaard splitting for a non-prime 3-manifold.
Recall that, by \cite{Hak68}, the splitting $(V, W; \Sigma)$ admits a Haken sphere.
A non-separating disk $D$ in $V$ is said to be {\it semi-primitive} if
there exists a Haken sphere $P$ of $(V, W; \Sigma)$ disjoint from $D$.
The next lemma follows from the definition.

\begin{lemma}
\label{lem:semi-primitive disks and words}
Let $(V, W; \Sigma)$ be a genus two Heegaard splitting for a non-prime $3$-manifold.
Let $D$ be a semi-primitive disk in $V$.
Then an element of $\pi_1(W)$ determined by $\partial D$ is either trivial or a power
of a primitive element.
\end{lemma}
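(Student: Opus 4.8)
The plan is to extract, from the hypothesis, an essential disk in $W$ whose boundary is disjoint from $\partial D$, and then read off the class of $\partial D$ directly in a free basis of $\pi_1(W)$ adapted to that disk. First I would recall that, since $D$ is semi-primitive, there is a Haken sphere $P$ of $(V, W; \Sigma)$ with $D \cap P = \emptyset$. By definition $P$ meets $\Sigma$ in a single essential circle $c = P \cap \Sigma$, so $P \setminus c$ consists of two disks, one of which, $D_W := P \cap W$, is properly embedded in $W$ with $\partial D_W = c$ essential on $\Sigma$; hence $D_W$ is an essential disk of $W$. Since $\partial D \subset \Sigma$ and $D$ is disjoint from $P$, we have $\partial D \cap c = (\partial D \cap P) \cap \Sigma = \emptyset$, so that $\partial D$ and $\partial D_W$ are disjoint simple closed curves on $\Sigma$.

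Next I would split into two cases according to whether $D_W$ is separating in $W$, recalling that $D_W$ is separating if and only if $c$ separates $\Sigma$. If $D_W$ is non-separating, I would choose a disk $E_W$ so that $\{D_W, E_W\}$ cuts $W$ into a $3$-ball, giving a free basis $\{x, y\}$ of $\pi_1(W)$ with $x$ and $y$ assigned to $\partial D_W$ and $\partial E_W$ exactly as in the setup preceding Lemma \ref{lem:non-triviality and non-primitivity}. Because $\partial D$ is disjoint from $\partial D_W$, the word determined by $\partial D$ contains no occurrence of $x^{\pm 1}$, so it reduces to $y^n$ for some $n \in \Integer$; thus the class of $\partial D$ is either trivial (if $n = 0$) or a power of the primitive element $y$. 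If instead $D_W$ is separating, then $c$ cuts $\Sigma$ into two once-punctured tori, and cutting $W$ along $D_W$ yields two solid tori $W_1$ and $W_2$ whose cores $a$ and $b$ form a free basis of $\pi_1(W) = \langle a \rangle \ast \langle b \rangle$. Since $\partial D$ is disjoint from $c$, it lies entirely on one side, say in $\partial W_1$, whence the class of $\partial D$ lies in $\langle a \rangle$ and equals $a^n$ for some $n \in \Integer$, again trivial or a power of the primitive element $a$.

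Finally I would note that triviality, primitiveness, and being a power are all invariant under conjugation, so the conclusion does not depend on the choice of basepoint and path used to represent $\partial D$ as an honest element of $\pi_1(W)$, which is what makes the statement well defined. The argument is almost entirely formal once the disk $D_W$ is produced, so I do not expect a serious obstacle; the only point requiring a little care is the separating case, where one must check that $\partial D$ cannot meet both complementary tori --- immediate from disjointness from $c$ --- and that the core of a solid-torus factor is indeed a member of a free generating pair of $\pi_1(W)$, that is, a primitive element.
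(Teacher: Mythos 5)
Your proof is correct and is essentially the argument the paper intends when it says the lemma ``follows from the definition'': a Haken sphere disjoint from $D$ meets $W$ in a properly embedded disk, and $\partial D$, lying entirely on one side of that disk, represents a power of the core of a solid torus, hence a power of a primitive element of $\pi_1(W)$. The only remark worth making is that your non-separating case is vacuous --- since a Haken sphere is separating in $M$ and meets $\Sigma$ in a single circle, that circle must separate $\Sigma$ (else some curve in $\Sigma$ would cross $P$ algebraically once), so $P \cap W$ is always a separating disk and only your second case occurs.
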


We remark that there is a semi-primitive disk $D$ in $V$ such that $\partial D$ represents the trivial element of $\pi_1(W)$ if and only if the manifold has a $S^2 \times S^1$ summand.
In this case, $\partial D$ also bounds a disk in $W$.

\begin{lemma}
\label{lem:sufficiency and necessity condition to be semi-primitive}
Let $(V, W; \Sigma)$ be a genus two Heegaard splitting for a non-prime $3$-manifold.
Let $D$ be  a non-separating disk in $V$.
Then $D$ is semi-primitive if and only if
there exists a non-separating disk $E'$ in $W$ disjoint from
$D$.
\end{lemma}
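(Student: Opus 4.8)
The plan is to rephrase everything in terms of the two disks that a Haken sphere cuts out. Recall that a Haken sphere $P$ meets $\Sigma$ in a single essential circle $c$, so $P = D_V \cup D_W$ with $D_V = P \cap V$ and $D_W = P \cap W$ properly embedded disks satisfying $\partial D_V = \partial D_W = c$. First I would check that $c$ is \emph{separating} on $\Sigma$. Since $P$ separates $M$ into pieces $M_1$ and $M_2$, each component of $\Sigma \setminus c$ lies in a single $M_i$; were they all in one $M_i$, the other piece would have its entire boundary $D_V \cup D_W$ meeting both $\operatorname{Int} V$ and $\operatorname{Int} W$ while its interior avoided $\Sigma$, which is impossible. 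Hence $\Sigma \setminus c$ is disconnected and, $c$ being essential, it cuts $\Sigma$ into two once-punctured tori $\Sigma_1, \Sigma_2$, so $D_V$ and $D_W$ are separating disks cutting $V$ and $W$ into solid tori $V_1, V_2$ and $W_1, W_2$ with $\partial V_i \cap \Sigma = \Sigma_i = \partial W_i \cap \Sigma$. Thus ``$D$ is semi-primitive'' is equivalent to the existence of a separating disk $D_W$ in $W$, disjoint from $D$, whose boundary is essential, separating on $\Sigma$, and bounds a disk in $V$.

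For the forward implication, suppose $D$ is semi-primitive and let $P = D_V \cup D_W$ be a Haken sphere with $P \cap D = \emptyset$. Then $D$ is disjoint from the separating disk $D_V$, so $D$ lies in one solid torus, say $V_1$, and in particular $\partial D \subset \Sigma_1$. I would then take $E'$ to be a meridian disk of the \emph{other} solid torus $W_2$. Cutting the genus two handlebody $W$ along a meridian of one of the $W_i$ leaves a connected genus one handlebody, so $E'$ is non-separating in $W$; moreover $\partial E' \subset \Sigma_2$ is disjoint from $\partial D \subset \Sigma_1$, and since $E' \subset W$ and $D \subset V$ meet only along $\Sigma$, this forces $D \cap E' = \emptyset$. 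This produces the required non-separating disk.

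The reverse implication is the substance of the lemma. Given a non-separating disk $E'$ in $W$ disjoint from $D$, I would extend $E'$ to a complete meridian system $\{E', E''\}$ of $W$ and let $\{x, y\}$ be the dual free basis of $\pi_1(W)$, with $x$ dual to $E'$ and $y$ dual to $E''$. Since $\partial D$ misses $E'$, every intersection of $\partial D$ with the system is with $E''$, so the element it determines is $y^k$ for some $k \in \mathbb{Z}$; that is, $\partial D$ represents a power of the primitive element $y$ (or the trivial element when $k = 0$), in agreement with Lemma \ref{lem:semi-primitive disks and words}. The remaining task is to upgrade this to an honest Haken sphere missing $D$: I would produce a separating disk $D_W$ in $W$ disjoint from both $E'$ and $D$, having $E'$ as a meridian of one of its two solid-torus sides, and with boundary $c$ disjoint from $\partial D$ that \emph{also} bounds a disk $D_V$ in $V$ with $D$ contained in one of the two solid tori cut off by $D_V$. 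Then $P = D_V \cup D_W$ is the desired Haken sphere, automatically disjoint from $D$.

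I expect the construction of this $D_W$ to be the main obstacle. The difficulty is that finding a separating curve $c$ disjoint from $\partial D$ bounding a disk in $W$ is easy (any separating disk of $W$ disjoint from $E'$ works), but one must \emph{simultaneously} arrange that $c$ bounds a disk in $V$, i.e. that $c$ realizes the reducibility of $M$. To control both sides at once, the plan is to begin with an arbitrary Haken sphere $P_0$ (which exists because $M$ is non-prime) and to use $E'$ together with $D$ to guide innermost-disk and outermost-arc surgeries that remove $P_0 \cap D$ while keeping both $P_0 \cap V$ and $P_0 \cap W$ compatible with the system $\{E', E''\}$. The hard part will be verifying that these surgeries terminate with a curve that still bounds disks on both sides; here the fact that $\partial D$ reads as the power $y^k$ of the primitive element $y$ is precisely what should force the process to close up, after which $P_0$ is disjoint from $D$ and $D$ is semi-primitive.
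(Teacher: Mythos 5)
Your ``only if'' direction is fine (and matches the paper's, which dismisses it as trivial), but your proof of the substantive direction stops exactly where it needs to start. You reduce the problem to producing a separating disk $D_W$ in $W$, disjoint from $D \cup E'$, whose boundary also bounds a disk in $V$ --- and then you only announce a plan for producing it: start with an arbitrary Haken sphere $P_0$ and perform innermost-disk/outermost-arc surgeries guided by $D$ and $E'$. You yourself flag the two crucial points of that plan --- termination, and the claim that the resulting curve still bounds disks on both sides --- as unverified, and they are genuinely problematic: a compression of a Haken sphere typically does not meet $\Sigma$ in a single essential circle (the defining property of a Haken sphere is fragile under surgery), and the observation that $\partial D$ reads as $y^k$ in your basis does not by itself ``force the process to close up.'' (Note also that this kind of surgery induction, with careful word-theoretic control, is what the paper needs for the much harder Theorem \ref{thm:surgery of semi-primitive disks for lens and lens}; it is not needed here.) So the proposal has a genuine gap: the Haken sphere is never constructed.

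The missing idea, which makes the lemma nearly immediate, is a direct construction in the $4$-holed sphere $\Sigma'$ obtained by cutting $\Sigma$ along $\partial D \cup \partial E'$, with boundary circles $d^{+}, d^{-}$ (from $\partial D$) and ${e'}^{+}, {e'}^{-}$ (from $\partial E'$). Take \emph{any} simple arc $\alpha_P$ in $\Sigma'$ joining $d^{+}$ to $d^{-}$; there is then a unique (up to isotopy) simple arc $\alpha_P'$ joining ${e'}^{+}$ to ${e'}^{-}$ disjoint from $\alpha_P$, and the key point is that the frontier $\gamma_P$ of a regular neighborhood of $d^{+} \cup \alpha_P \cup d^{-}$ \emph{coincides} with the frontier of a regular neighborhood of ${e'}^{+} \cup \alpha_P' \cup {e'}^{-}$. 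Consequently $\gamma_P$ bounds a disk in $V$ (the frontier of a neighborhood of $D \cup \alpha_P$, i.e.\ a band sum of two parallel copies of $D$) and simultaneously bounds a disk in $W$ (the frontier of a neighborhood of $E' \cup \alpha_P'$); the union of these two disks is a separating sphere meeting $\Sigma$ in the single essential circle $\gamma_P$ and disjoint from $D$. This needs no pre-existing Haken sphere, no surgery induction, and none of the $\pi_1$ bookkeeping you set up --- indeed, the fact that $\partial D$ represents a power of a primitive element is a \emph{consequence} of the lemma (via Lemma \ref{lem:semi-primitive disks and words}), not an ingredient of its proof.
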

\begin{proof}
The ``only if" part is trivial.
Let $E'$ be a non-separating disk in $W$ disjoint from $D$, and let $\Sigma'$ be the 4-holed sphere
obtained by cutting $\Sigma$ along $\partial D \cup \partial E'$.
Let $d^+$ and $d^-$ (${e'}^+$ and ${e'}^-$, respectively)
be the two boundary circles of $\Sigma'$ coming from
$\partial D$ ($\partial E'$, respectively).
Let $\alpha_{P}$ be an arbitrary simple arc in $\Sigma'$ connecting $d^+$ and $d^-$.
Then, up to isotopy,
there exists a unique simple arc $\alpha_{P}'$ in $\Sigma'$ connecting ${e'}^+$ and ${e'}^-$
such that $\alpha_{P} \cap \alpha_{P}' = \emptyset$.
We note that the frontier $\gamma_{P}$ of a regular neighborhood of $d^+ \cup \alpha_{P} \cup d^-$
coincides with the frontier of a regular neighborhood of
${e'}^+ \cup \alpha_{P}' \cup {e'}^-$ in $\Sigma'$.
It follows that $\gamma_{P}$ bounds a disk in each of
$V$ and $W$.
This implies that there exists a Haken sphere $P$ of $(V, W; \Sigma)$ such that
$P \cap \Sigma = \gamma_{P}$.
\end{proof}

In the proof above, every simple closed curve $\gamma_Q$
in $\Sigma'$ that separates $d^+ \cup d^-$ and $e'^+ \cup e'^-$
is the frontier of a regular neighborhood of the union of
$d^+ \cup d^-$ ($e'^+ \cup e'^-$, respectively) and a simple arc
$\alpha_Q$ ($\alpha_Q'$, respectively) in $\Sigma'$ connecting
$d^+$ and $d^-$ ($e'^+$ and $e'^-$, respectively).
Thus every essential, separating, simple closed curve
in $\Sigma$ disjoint from $\partial D \cup \partial E'$ bounds
separating disks in both $V$ and $W$.

\subsection{Connected sum of two lens spaces}
\label{subsec:Semi-primitive disks and a genus two Heegaard splitting for
the connected sum of two lens spaces}

Throughout this subsection, we always assume that
$(V, W; \Sigma)$ is a genus two Heegaard splitting for the
connected sum of two lens spaces.

\begin{lemma}
\label{lem:uniqueness of a disjoint semi-primitive disk}
Let $D$ be a semi-primitive disk in $V$.
Then there is a unique non-separating disk $E'$ in $W$ disjoint from
$D$.
\end{lemma}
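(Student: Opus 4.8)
The existence of such a disk $E'$ is exactly the ``if'' part of Lemma~\ref{lem:sufficiency and necessity condition to be semi-primitive}, so the whole content is uniqueness. The plan is to isolate one key fact and then drive it with a standard surgery argument. The key fact is: \emph{any two disjoint non-separating disks in $W$ that are disjoint from $D$ are isotopic.} Note that, unlike some of the later results, this lemma will use almost nothing about primitivity; the only input beyond elementary handlebody topology is the nontriviality of $\partial D$ guaranteed by the hypothesis.

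First I would prove the key fact. Suppose $E'$ and $E''$ are disjoint non-separating disks disjoint from $D$ that are not isotopic. Cutting $W$ along $E'$ yields a solid torus, in which $E''$ becomes an essential disk; since $E''$ is not isotopic to $E'$ it is not boundary-parallel to a copy of $E'$, so it is a meridian disk of this solid torus, and hence $\{E', E''\}$ is a complete meridian system of $W$. Consequently the complement of $E' \cup E''$ in $W$ is a $3$-ball. As $\partial D$ is disjoint from $\partial E' \cup \partial E''$, it lies in this ball and is therefore null-homotopic in $W$. By the remark following Lemma~\ref{lem:semi-primitive disks and words}, the existence of a semi-primitive disk whose boundary is trivial in $\pi_1(W)$ forces $M$ to have an $S^2 \times S^1$ summand, contradicting the standing assumption that $M$ is a connected sum of two lens spaces. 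This is exactly the step where the hypothesis is essentially used.

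Next I would deduce uniqueness. Fix one disk $E'$ disjoint from $D$, and let $E''$ be any non-separating disk disjoint from $D$, chosen within its isotopy class so that $|E' \cap E''|$ is minimal. If they intersect, take an arc of $E' \cap E''$ that is outermost on $E'$, cutting off a subdisk $\Delta \subset E'$ whose boundary is the union of this arc and an arc $\beta \subset \partial E'$ with $\mathrm{int}\,\Delta \cap E'' = \emptyset$. Surgering $E''$ along $\Delta$ produces two disks $\widehat E_1, \widehat E_2$ whose boundaries consist of subarcs of $\partial E''$ together with copies of $\beta \subset \partial E'$; in particular both are disjoint from $\partial D$ and (after a small isotopy) from $E''$, and each has strictly fewer intersections with $E'$ than $E''$ does. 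Since $[\partial \widehat E_1] + [\partial \widehat E_2] = [\partial E''] \neq 0$ in $H_1(\Sigma)$, at least one of them, say $\widehat E$, is non-separating. Applying the key fact to the disjoint pair $\{\widehat E, E''\}$ gives $\widehat E \simeq E''$, so $\widehat E$ is a representative of the isotopy class of $E''$ meeting $E'$ in strictly fewer points, contradicting minimality. Hence $E'$ and $E''$ are already disjoint, and the key fact gives $E' \simeq E''$, which is the desired uniqueness.

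The main obstacle I anticipate lies entirely in the surgery step: one must be sure the surgery keeps us inside the class of non-separating disks disjoint from $D$ while strictly reducing the geometric intersection with $E'$. The homological relation $[\partial \widehat E_1] + [\partial \widehat E_2] = [\partial E'']$ guarantees that at least one surgered piece is non-separating, and the fact that every new boundary arc is drawn from $\partial E' \cup \partial E''$ (both of which miss $\partial D$) guarantees that the surgered disks remain disjoint from $D$; together with routine minimal-position bookkeeping for outermost arcs, these observations close the argument. The conceptual heart, however, is the key fact, where the assumption that $M$ is a connected sum of two lens spaces (and so has no $S^2 \times S^1$ summand) is precisely what rules out a second disjoint class.
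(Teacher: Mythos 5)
Your proof is correct, and it rests on the same two pillars as the paper's own argument: outermost-arc surgery, and the observation that if $\partial D$ were disjoint from two disjoint, non-isotopic, non-separating disks in $W$, then $\partial D$ would bound a disk $D'$ in $W$, so that $D \cup D'$ would be a non-separating sphere in a connected sum of two lens spaces --- impossible. The difference is organizational. The paper assumes two non-isotopic disks $E_1'$, $E_2'$ disjoint from $D$ and iteratively surgers $E_1'$ along outermost subdisks of $E_2'$ until it produces a non-separating disk $E'$ disjoint from $E_1'$ and $D$, then concludes directly that $\partial D$ bounds in $W$; this last step tacitly requires the surgered disk to be non-isotopic to $E_1'$ (otherwise $E_1' \cup E'$ is not a complete meridian system), a point hidden in the phrase ``by repeating surgeries.'' Your decomposition --- first the key fact that disjoint non-separating disks missing $D$ are isotopic, with the cut-along-$E'$ solid-torus analysis making the complete-meridian-system step explicit, and then a minimal-position contradiction in which the surgered disk $\widehat E$ is compared with $E''$ itself --- sidesteps that subtlety entirely, since isotopy of $\widehat E$ to $E''$ is precisely what violates minimality; no discussion of whether the surgered disk is ``new'' is needed. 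The paper's version buys brevity; yours buys a cleaner induction and an explicitly justified meridian-system step. Two minor corrections: in the key fact, ``not boundary-parallel to a copy of $E'$'' should also exclude the case that $E''$ is parallel, in the cut-open solid torus, to a disk containing \emph{both} copies of $E'$ (ruled out because such an $E''$ would be separating in $W$); and existence is the ``only if'' direction of Lemma~\ref{lem:sufficiency and necessity condition to be semi-primitive} (the trivial one), not the ``if'' direction.
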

\begin{proof}
By Lemma \ref{lem:sufficiency and necessity condition to be semi-primitive}, such a disk $E'$ exists.
To see the uniqueness, assume that there exist non-isotopic, non-separating disks
$E'_1$ and $E'_2$ in $W$ disjoint from $D$.
We assume that $E'_1$ and $E'_2$ intersect each other transversely and minimally.
If they have non-empty intersection, a disk obtained from $E'_1$ by a surgery along
an outermost subdisk of $E'_2$ cut off by $E'_1 \cap E'_2$ is also
a non-separating disks in $W$ disjoint from $D$.
This disk has fewer intersection with $E'_1$ than $E'_2$ had, and so by repeating surgeries if they still have intersection, we obtain a non-separating disk $E'$ in $W$ disjoint from $E'_1$ and from $D$.
Since $\partial D$ does not intersects $E'_1 \cup E'$, the circle $\partial D$ bounds
a disk $D'$ in $W$.
This implies that $D \cup D'$ is a non-separating sphere in
the connected sum of two lens spaces, whence a contradiction.
\end{proof}


The next theorem will play an important role in Section
\ref{sec:The complex of semi-primitive disks}.
\begin{theorem}
\label{thm:surgery of semi-primitive disks for lens and lens}
Let $D$ and $E$ be semi-primitive disks in $V$ that intersect each
other transversely and minimally.
Then at least one of the two disks obtained from $E$ by
a surgery along an outermost subdisk of $D$ cut off by $D \cap E$
is a semi-primitive disk.
\end{theorem}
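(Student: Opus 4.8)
The plan is to reduce the statement, via Lemma \ref{lem:sufficiency and necessity condition to be semi-primitive}, to producing a disjoint non-separating disk in $W$ for at least one of the two surgered disks. Write $\Delta$ for the outermost subdisk of $D$, bounded by an outermost arc $b$ of $D \cap E$ together with a subarc $a \subset \partial D$ whose interior is disjoint from $\partial E$. The arc $b$ cuts $E$ into $E_1$ and $E_2$, and the two disks obtained from the surgery are $E' = E_1 \cup \Delta$ and $E'' = E_2 \cup \Delta$, with boundaries $\partial E' = c_1 \cup a'$ and $\partial E'' = c_2 \cup a''$, where $c_1 \sqcup c_2 = \partial E \setminus \partial b$ and $a', a''$ are two parallel copies of $a$. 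First I would record that at least one of $E', E''$ is non-separating: with suitable orientations $[\partial E'] + [\partial E''] = [\partial E]$ in $H_1(V)$, and $[\partial E] \neq 0$ since $E$ is non-separating, so one of the two classes is non-trivial. Since we are in the connected sum of two lens spaces, the remark following Lemma \ref{lem:semi-primitive disks and words} also shows that the $\pi_1(W)$-element of a non-separating surgered disk cannot be trivial. Thus the whole content is to certify, for one non-separating surgered disk, a disjoint non-separating disk in $W$.

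For this certification I would first try the unique non-separating disk $E^\ast \subset W$ disjoint from $E$ supplied by Lemma \ref{lem:uniqueness of a disjoint semi-primitive disk}. Since $E^\ast$ is disjoint from $E_1$ and $E_2$ and meets $V$ only along $\Sigma$, every point of $E^\ast \cap E'$ or $E^\ast \cap E''$ lies on the band arc $a \subset \partial D$; in particular, if $\partial E^\ast$ misses $a$, then $E^\ast$ certifies semi-primitivity of whichever of $E', E''$ is non-separating, and we are done. Symmetrically, for the unique dual disk $D^\ast \subset W$ of $D$, we have $\partial D^\ast \cap a' = \partial D^\ast \cap a'' = \emptyset$ because $\partial D^\ast \cap \partial D = \emptyset$, so if $\partial D^\ast$ also misses $\partial E$ then $D^\ast$ is disjoint from both surgered disks.

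The main obstacle is the remaining case, in which $\partial E^\ast$ crosses $a$ while $\partial D^\ast$ crosses $\partial E$, so that neither dual disk is immediately available. Here the plan is to pass to the combinatorial criterion. Reading $\partial D$, $\partial E$, $\partial E'$ and $\partial E''$ as cyclic words in $\pi_1(W) \cong \Integer \ast \Integer$ against a fixed meridian system of $W$, the band surgery expresses the words of $\partial E'$ and $\partial E''$ as $p\,u$ and $u^{-1} q$, where $pq$ is the word of $\partial E$ and $u$ is the subword contributed by the arc $a$ of the word of $\partial D$ (consistent with $\partial E$ being the band sum of $\partial E'$ and $\partial E''$, since $(p\,u)(u^{-1}q) = pq$). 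By Lemma \ref{lem:semi-primitive disks and words} both $pq$ and the word of $\partial D$ are powers of primitive elements, and the goal becomes to show that at least one of $p\,u$, $u^{-1}q$ is again a power of a primitive element.

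I expect this last reduction to be the crux. The plan is to assume for contradiction that both $p\,u$ and $u^{-1}q$ fail to be powers of primitives, and then to exhibit in each of them one of the forbidden patterns of Lemma \ref{lem:necessity condition to be a power of a primitive element}, namely both $x$ and $x^{-1}$, both $y$ and $y^{-1}$, or both $x^{\pm 2}$ and $y^{\pm 2}$ simultaneously. Using the minimality of $D \cap E$ to control cancellation between the $p$- and $q$-parts and the $u$-part, together with the cyclically reduced normal forms and the explicit non-primitive subwords detected in Lemma \ref{lem:non-triviality and non-primitivity}, I would argue that these patterns cannot occur simultaneously in both words while $\partial D$ (whose word is $u v$) and $\partial E$ are themselves powers of primitives; this is where I anticipate the bulk of the case analysis. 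Combining the surviving power-of-a-primitive word with the non-separating conclusion of the first paragraph then yields, through the equivalence underlying Lemmas \ref{lem:semi-primitive disks and words} and \ref{lem:sufficiency and necessity condition to be semi-primitive}, that one of $E'$, $E''$ is semi-primitive.
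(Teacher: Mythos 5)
Your reduction and the two easy cases are fine, but the proposal has two genuine gaps, one logical and one substantive. The logical gap is in your final step: you appeal to ``the equivalence underlying Lemmas \ref{lem:semi-primitive disks and words} and \ref{lem:sufficiency and necessity condition to be semi-primitive}'', but no such equivalence exists in the paper or in your argument. Lemma \ref{lem:semi-primitive disks and words} is only a necessary condition: semi-primitive implies the boundary word is trivial or a power of a primitive. The converse --- that a non-separating disk in $V$ whose boundary represents a power of a primitive element of $\pi_1(W)$ admits a disjoint non-separating disk in $W$, hence is semi-primitive --- is never proved anywhere, and it is not routine: it asks for a geometric (disjoint-disk) realization of an algebraic condition. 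Notably, the paper's own proof never uses such a converse; it applies the word criterion only negatively (via Lemmas \ref{lem:necessity condition to be a power of a primitive element} and \ref{lem:non-triviality and non-primitivity}, applied to $\partial D$ to rule out bad configurations), and when it must certify that its candidate disk $E_{p_1}$ is semi-primitive --- even though its word $(x^{p_2}y)^{p_1}$ is visibly a power of a primitive --- it does so by explicitly constructing a Haken sphere disjoint from it (the curve $\gamma_Q$ built from the arc $\alpha_Q$, using the remark after Lemma \ref{lem:sufficiency and necessity condition to be semi-primitive}), not by citing the word form.

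The substantive gap is that the crux of your plan --- showing that at least one of the spliced words $p\,u$, $u^{-1}q$ is a power of a primitive --- is only announced (``this is where I anticipate the bulk of the case analysis''), not carried out, and it is not clear it can be closed in the form you state. The letters $x,y$ are read against dual disks in $W$ that depend on a choice of Haken sphere, which your outline never pins down; with a bad choice the spliced words need not even be cyclically reduced, and minimality of $|D\cap E|$ alone does not control this. The paper resolves exactly this difficulty by a global, geometric argument: it chooses a Haken sphere $P$ disjoint from $E$ minimizing $|C\cap E_0|$, where $E_0$ is the meridian of the complementary solid torus; observes that if $C\cap E_0=\emptyset$ then $E_0$ itself is one of the surgered disks and is semi-primitive; and otherwise builds an explicit sequence of disks $E_1,\dots,E_{p_1}$ by band sums, proves $E_{p_1}$ is semi-primitive, shows inductively that $E_{i+1}$ arises from $E_i$ by surgery along an outermost piece of $C$, and derives the contradiction $|C\cap E_{p_1}|<\cdots<|C\cap E_0|$ with the minimality of the chosen sphere. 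That minimization over Haken spheres and the induction along the sequence $E_i$ are the missing ideas; without them, assuming both spliced words fail the primitive-power test gives you no usable foothold, and even with them your endgame would still founder on the unproved converse above.
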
	
\begin{proof}
Let $C$ be an outermost subdisk of $D$ cut off by $D \cap E$.
Each Haken sphere $P$ of
$(V, W; \Sigma)$ disjoint from $E$ cuts the handlebody $V$ into two solid tori $V_1$ and $V_2$, and $W$ into $W_1$ and $W_2$.
We assume that
$E$ is the meridian disk of $V_1$, and that $V_1 \cup W_1$ and $V_2 \cup W_2$ are punctured lens spaces.
Let $E_0$, $E'$ and $E'_0$ be
the meridian disks of solid tori
 $V_2$, $W_1$ and $W_2$, respectively,
which are disjoint from $P$.
We choose a Haken sphere $P$ among all Haken spheres
disjoint from $E$ so that $|C \cap E_0|$ is minimal.
Assume that $\partial E'$ ($\partial E_0'$, respectively)
is a $(p_2, q_2)$-curve ($(p_1, q_1)$-curve, respectively) with respect to
the meridian $\partial E$ ($\partial E_0$, respectively)
and a fixed longitude on $\partial V_1$ ($\partial V_2$, respectively).
We may assume that $1 \leq q_1 < p_1$ and $1 \leq q_2 < p_2$. 
Each element of $\pi_1(W)$ can be represented by a word on
$\{ x, y \}$, where $x$ and $y$ are determined (up to sign) by
the meridian disks $E'$ and $E'_0$ respectively.
If $E_0$ is disjoint from $C$, then $E_0$ is one of the disks obtained from $E$ by a surgery along $C$,
and is a semi-primitive disk, so we are done.

Assume that $C \cap E_0 \neq \emptyset$.
Let $C_0$ be an outermost subdisk of $C$ cut off by $C \cap E_0$
such that
$C_0 \cap E = \emptyset$.
Let $\Sigma_0$ be the 4-holed sphere obtained by cutting $\Sigma$ along
$\partial E \cup \partial E_0$.
Let $e^+$ and $e^-$ (${e_0}^+$ and ${e_0}^-$, respectively)
be the boundary circles of
$\Sigma_0$ coming from $\partial E$ ($\partial E_0$, respectively).
Then $C_0 \cap \Sigma_0$ is the frontier of
a regular neighborhood of the union of one of
$e^+$ and $e^-$, say $e^+$, and a simple arc $\alpha_0$ connecting
$e^+$ and one of ${e_0}^+$ and ${e_0}^-$, say ${e_0}^+$.
Up to isotopy, the arc $\alpha_0$ does not intersect
$\partial E'_0$, otherwise a word of $\partial D$ would contain the subword $y x^{p_2} y^{-1}$ (after changing the orientations if necessary),
which contradicts
Lemmas \ref{lem:non-triviality and non-primitivity} and \ref{lem:semi-primitive disks and words}.
We denote by $E_1$ the disk obtained from $E_0$ by a surgery along $C_0$
that is not $E$.
We remark that $|C \cap E_1| < |C \cap E_0|$ and
that $\partial E_1$ determines a word of the form $x^{p_2} y^{p_1}$ (after changing the orientations if necessary).
See $\Sigma_0$ in Figure \ref{fig:sequence_of_disks}.
\begin{figure}[htbp]
\begin{center}
\includegraphics[width=14.5cm,clip]{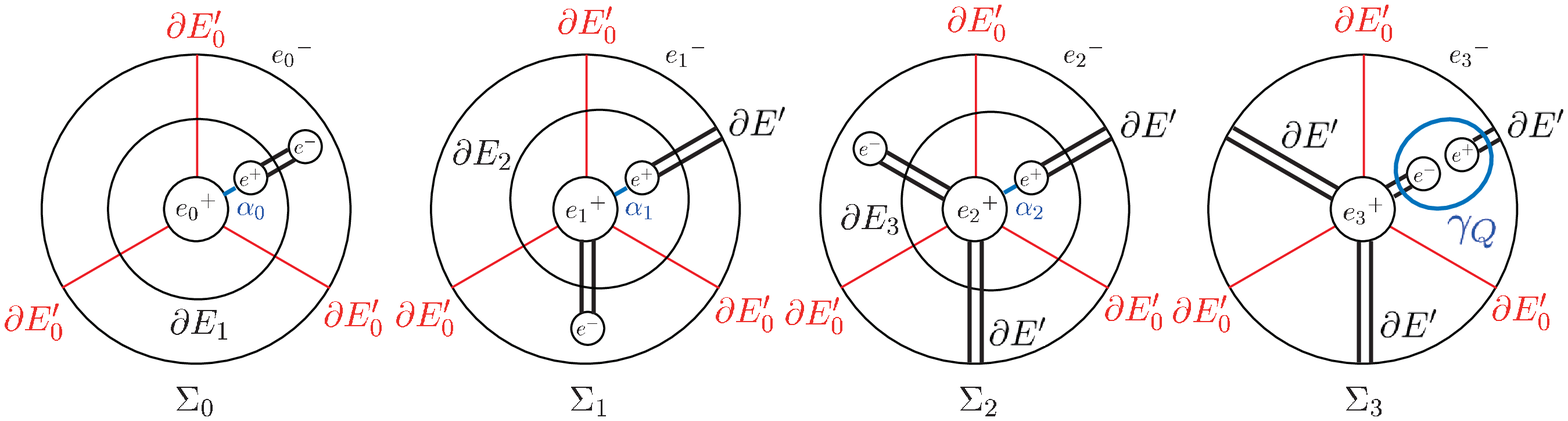}
\caption{The case where $(p_1, q_1) = (3,1)$ and $(p_2, q_2) = (2,1)$.
The circles $\partial E_1$, $\partial E_2$ and $\partial E_3$ determine the words
$x^2 y^3$, $x^2 y x^2 y^2$, $(x^2 y)^3$, respectively. }
\label{fig:sequence_of_disks}
\end{center}
\end{figure}

We define inductively a sequence of disks $E_2$, $E_3 , \ldots , E_{p_1}$ in $V$ as follows.
For $i \in \{1, 2, \ldots, p_1-1 \}$
let $\Sigma_i$ be the 4-holed sphere obtained by cutting $\Sigma$ along
$\partial E \cup \partial E_i$.
Let $e^+$ and $e^-$ (${e_i}^+$ and ${e_i}^-$, respectively)
be the boundary circles of
$\Sigma_i$ coming from $\partial E$ ($\partial E_i$, respectively).
Then there exists a unique simple arc $\alpha_i$ in $\Sigma_i$ connecting $e^+$ and one of ${e_i}^+$ or ${e_i}^-$ such that $\alpha_i$ is disjoint from $\partial E'_0$ and is not parallel to any arc component of $\partial E' \cap \Sigma_i$.
We may assume that $\alpha_i$ connects $e^+$ and ${e_i}^+$ by exchanging ${e_i}^+$ and ${e_i}^-$ if necessary.
Let $E_{i+1}$ be the disk obtained by the band sum of $E$ and $E_i$ along $\alpha_i$.
The disk $E_{i+1}$ is not isotopic to $E_{i-1}$ since the arc $\alpha_i$ is not parallel to any arc component of $\partial E' \cap \Sigma_i$.
See Figure \ref{fig:sequence_of_disks}.
We note that the circle $\partial E_2$ determines the word
$x^{p_2} y^{q_1} x^{p_2} y^{p_1 - q_1}$.
The circle $\partial E_3$ determines the word
$x^{p_2} y^{q_1} x^{p_2} y^{q_1} x^{p_2} y^{p_1 - 2 q_1}$ if $1 \leqslant q_1 \leqslant p_1 / 2$, and
$x^{p_2} y^{2p_1 - q_1} x^{p_2} y^{p_1 - q_1} x^{p_2} y^{p_1 - q_1}$ if $p_1 / 2 < q_1 < p_1 $.
Also, the circle $\partial E_{p_1-1}$ determines the word
$(x^{p_2} y)^{p_1 - q_1} y (x^{p_2} y)^{q_1 - 1}$. 
Finally, the circle $\partial E_{p_1}$ determines a word of the form
$(x^{p_2} y)^{p_1}$, which is apparently a power of a primitive element of
$\pi_1(W)$.

We show that $E_{p_1}$ is a semi-primitive disk and in fact there exists a Haken sphere disjoint from $E_{p_1}$ and $E$.
Let $\Sigma_{p_1}$ be the 4-holed sphere obtained by cutting $\Sigma$ along
$\partial E \cup \partial E_{p_1}$.
By the construction, the two boundary circles
$e^+$ and $e^-$ of $\Sigma_{p_1}$
coming from $\partial E$ are contained in the same component
of $\Sigma_{p_1}$ cut off by $\partial E'_0 \cap \Sigma_{p_1}$.
Hence there exists an arc $\alpha_Q$ in $\Sigma_{p_1}$ connecting
$e^+$ and $e^-$ such that $\alpha_Q \cap \partial E'_0 = \emptyset$.
We denote by $\gamma_Q$ the frontier of a regular neighborhood
of $e^+ \cup \alpha_Q \cup e^-$.
Apparently, $\gamma_Q$ is disjoint from
$E \cup E'_0$.
See the $4$-holed sphere $\Sigma_3$ in Figure \ref{fig:sequence_of_disks}. 
Thus it follows from the remark right after
Lemma \ref{lem:sufficiency and necessity condition to be semi-primitive}
that there exists a Haken sphere $Q$ in $(V, W, \Sigma)$
such that $Q \cap \Sigma = \gamma_Q$.
In particular, $Q$ is disjoint from $E_{p_1}$, and hence $E_{p_1}$ is a semi-primitive disk.

Now we claim that, for $i \in \{1, 2, \ldots, p_1-1 \}$,
$C \cap E_i \neq \emptyset$, and $E_{i+1}$ is obtained from $E_i$ by surgery along
an outermost subdisk $C_i$ of $C$ cut off by $C \cap E_i$ such that
$C_i \cap E = \emptyset$.
The latter claim follows immediately from the former one, since, if $C$ intersects $E_i$,
then $C_i \cap \Sigma_i$ is the frontier of
a regular neighborhood of $e^+ \cup \alpha_i$ in $\Sigma_i$, and so the same reason to the case of $\alpha_0$ implies the latter claim.
Suppose that $E_i$ is the first disk disjoint from $C$ for contradiction.

First, assume that $i \in \{ 1 , 2 , \ldots, p_1 -2 \}$.
Since $C$ does not intersect $E_i$, the intersection $C \cap \Sigma_i$ is a simple arc with both end points on
$e^{\epsilon_1}$, where $\epsilon_1 \in \{+ , -\}$.
Then $C \cap \Sigma_i$ is the frontier
of a regular neighborhood of ${e_i}^{\epsilon_1} \cup \beta_{\epsilon_1 \epsilon_2}$, where
$\epsilon_2 \in \{+ , -\}$ and $\beta_{\epsilon_1 \epsilon_2}$ is a simple arc in
$\Sigma_{i}$ connecting $e^{\epsilon_1}$ and $e_{i}^{\epsilon_2}$.
We see that $\beta_{\epsilon_1 \epsilon_2}$ is disjoint from $\partial E'_0 \cap \Sigma_i$, otherwise
$C \cap \Sigma_i$ would give a word containing $y x^{p_2} y^{-1}$ and hence $D$ is not a semi-primitive disk by
Lemmas \ref{lem:non-triviality and non-primitivity} and \ref{lem:semi-primitive disks and words}, a contradiction.
If $\epsilon_1 \neq \epsilon_2$, then we may isotope $C \cap \Sigma_i$ on  $\Sigma_{i}$ so that
$C \cap \Sigma_i$ is disjoint from $E_{i-1}$.
See Figure \ref{fig:the_disk_c}.
This contradicts the assumption that $C$ intersects $E_{i-1}$.
Thus we have $\epsilon_1 = \epsilon_2$.
We assumed that $i \leqslant p_1 - 2$, and hence there exists at least one arc component of $C$ cut off by $\partial E'_0$ that does not
intersect $\partial E'$, which means a word determined by $C \cap \Sigma_i$ contains $y^2$.
Therefore $C \cap \Sigma_i$ gives a word containing $x^{p_2}y^2$, and so containing $x^2y^2$.
\begin{figure}[htbp]
\begin{center}
\includegraphics[width=10cm,clip]{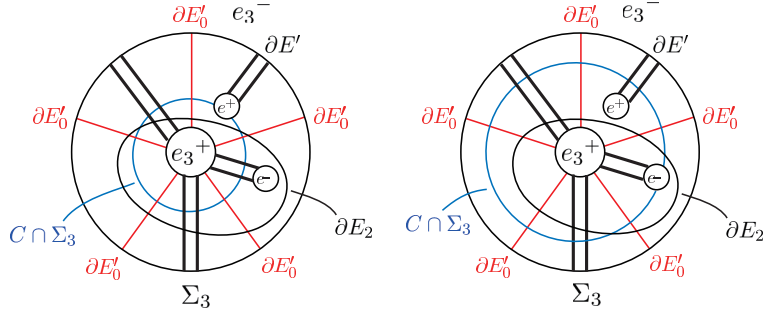}
\caption{The case where $(p_1, q_1) = (5,2)$, $(p_2, q_2) = (2,1)$ and $i=3$.}
\label{fig:the_disk_c}
\end{center}
\end{figure}
Again, this implies that $D$ is not a semi-primitive disk 
by Lemmas \ref{lem:non-triviality and non-primitivity} and \ref{lem:semi-primitive disks and words},
whence a contradiction.
(We note that, when $i = p_1 - 1$, the word determined by $C \cap \Sigma_{p_1 - 1}$ is of the form $yx^{p_2}yx^{p_2} \cdots yx^{p_2}y$, and so it does not contain $y^2$.)
Next, assume that $i=  p_1 -1$.
In this case, $C$ is disjoint from $E_{ p_1 -1}$ and intersects $E_{ p_1 -2}$.
Then one of the resulting disks obtained by surgery on
$E$ along $C$ is $E_{ p_1 -1}$, and the other one is the semi-primitive disk $E_{ p_1 }$.
In particular, $C$ is disjoint from $E_{p_1}$.
This contradicts the minimality of $|C \cap E_0|$ since we are assuming that
$C \cap E_0 \neq \emptyset$.
Hence we get the claim.

However, this is impossible since
now we have
the inequalities
$|C \cap E_{p_1}| < |C \cap E_{p_1 - 1}| < \cdots < |C \cap E_0|$
and this contradicts, again, the minimality of $|C \cap E_0|$ .
\end{proof}	

\begin{lemma}
\label{lem:disjoint semi-primitive disks and reducing spheres}
Let $D$ and $E$ be disjoint, non-isotopic semi-primitive disks in $V$.
Then there exists a unique Haken sphere of $(V, W; \Sigma)$
disjoint from $D \cup E$.
\end{lemma}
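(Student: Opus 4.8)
The plan is to treat existence and uniqueness separately, with uniqueness being essentially formal and existence being the real work. I begin with a structural observation used throughout: since $D$ and $E$ are disjoint, non-isotopic, non-separating disks in the genus two handlebody $V$, they form a complete meridian system, i.e. $D \cup E$ cuts $V$ into a $3$-ball. (If they did not, then after cutting $V$ along $D$ we would find $E$ as a boundary-parallel disk in the resulting solid torus, forcing $E$ to be inessential or isotopic to $D$.) In particular there is a unique separating disk $\Delta_V$ in $V$ disjoint from $D \cup E$, whose boundary $\gamma_V$ is the unique essential separating curve in $\Sigma$ disjoint from $\partial D \cup \partial E$, separating $\{d^+, d^-\}$ from $\{e^+, e^-\}$ in the four-holed sphere $\Sigma \setminus (\partial D \cup \partial E)$.

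For existence, I would argue that some Haken sphere is simultaneously disjoint from $D$ and from $E$. Since $D$ is semi-primitive, Lemma \ref{lem:sufficiency and necessity condition to be semi-primitive} guarantees that the collection $\mathcal{P}$ of Haken spheres disjoint from $D$ is non-empty. I would choose $P \in \mathcal{P}$ so that $|\partial E \cap (P \cap \Sigma)|$ is minimal, and claim this minimum is zero. Writing $x$ and $y$ for the letters determined by the two $W$-meridian disks associated to $P$ (as in the proof of Theorem \ref{thm:surgery of semi-primitive disks for lens and lens}), the arcs of $\partial E$ cut off by $P \cap \Sigma$ read off a word for the element of $\pi_1(W)$ represented by $\partial E$. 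If the minimum were positive, an outermost subdisk of $E$ cut off by $E \cap (P \cap V)$ would, exactly as in that theorem, force the corresponding word to contain a subword of the form $x y^{p} x^{-1}$ or $x^{2} y^{2}$; by Lemma \ref{lem:non-triviality and non-primitivity} this element is then neither trivial nor a power of a primitive element, contradicting the semi-primitivity of $E$ via Lemma \ref{lem:semi-primitive disks and words}. Hence the minimum is zero and the chosen $P$ is disjoint from $D \cup E$.

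For uniqueness, suppose $P_{1}$ and $P_{2}$ are Haken spheres disjoint from $D \cup E$. Since $M$ is a connected sum of two lens spaces (so $M$ contains no non-separating sphere), each $P_{i} \cap V$ is a separating disk in $V$ disjoint from $D \cup E$, and each $P_{i} \cap \Sigma$ is an essential separating curve disjoint from $\partial D \cup \partial E$. By the uniqueness of the separating disk $\Delta_V$ established above, $P_{1} \cap \Sigma$ and $P_{2} \cap \Sigma$ are both isotopic to $\gamma_V$, so after isotopy $P_{1} \cap V = P_{2} \cap V = \Delta_V$. The remaining pieces $P_{1} \cap W$ and $P_{2} \cap W$ are properly embedded disks in the handlebody $W$ with the same boundary curve $\gamma_V$; since $W$ is irreducible, a simple closed curve on $\partial W$ bounds at most one isotopy class of disk, so these two disks are isotopic. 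Therefore $P_{1}$ and $P_{2}$ are isotopic as Haken spheres of the splitting.

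I expect the existence step to be the main obstacle, since it is precisely there that semi-primitivity must be converted into a geometric statement, and the outermost-disk bookkeeping (choosing the correct outermost subdisk of $E$, verifying that the surgered $V$-side disk still caps off to a Haken sphere, and controlling the resulting word so that one of the forbidden subwords genuinely appears) is delicate; this is the part that runs parallel to, and reuses the machinery of, Theorem \ref{thm:surgery of semi-primitive disks for lens and lens}. By contrast, the uniqueness step is formal once one knows that the separating disk of $V$ disjoint from the cut system $\{D, E\}$ is unique and that a boundary curve determines its disk in the irreducible handlebody $W$.
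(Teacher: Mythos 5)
Your uniqueness argument rests on a false claim. In a four-holed sphere there is not a unique isotopy class of essential separating curve splitting $\{d^+, d^-\}$ from $\{e^+, e^-\}$: there are infinitely many, namely the frontiers of regular neighborhoods of $d^+ \cup \alpha \cup d^-$ as $\alpha$ ranges over the infinitely many isotopy classes of arcs joining $d^+$ to $d^-$, and these remain pairwise non-isotopic in $\Sigma$. The paper itself exploits this abundance in the topologically identical configuration where the second disk lies in $W$: in the proof of Theorem \ref{cor:contractibility of sphere complexes} (see also Lemma \ref{lem:transitivity on vertices connected to a vertex}) the Haken spheres disjoint from $D \cup E'$ correspond bijectively to such curves and are parametrized by $\Rational_{\mathrm{odd}} \cup \{\infty\}$; since $(\Sigma, \partial D \cup \partial E)$ and $(\Sigma, \partial D \cup \partial E')$ are homeomorphic pairs, the same holds in your setting. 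Worse, because $D \cup E$ cuts $V$ into a ball, \emph{every} one of these curves bounds a disk in $V$, so ``separating disk in $V$ disjoint from $D \cup E$'' is also highly non-unique. The genuine content of uniqueness is that exactly one of these infinitely many curves also bounds a disk in $W$, and this is precisely where the paper's one-line proof invokes Lemma \ref{lem:uniqueness of a disjoint semi-primitive disk}: a Haken sphere $P$ disjoint from $D \cup E$ determines meridian disks of the two solid tori into which $P$ cuts $W$, one disjoint from $D$ and one disjoint from $E$; by that lemma these must be the canonical companions $E'$ and $D'$, so $P \cap \Sigma$ is forced to be disjoint from all four curves $\partial D, \partial E, \partial D', \partial E'$, and a separating curve disjoint from that filling system is unique up to isotopy. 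Your proof never touches the $W$-side data, so it cannot distinguish the actual Haken sphere from the infinitely many spheres that compress only into $V$.

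The existence half is in the right spirit (the paper also minimizes over Haken spheres and derives word-theoretic contradictions), but your two key claims do not hold as stated. First, a single outermost subdisk of $E$ cut off by $E \cap (P \cap V)$ does not force a forbidden subword ``exactly as in'' Theorem \ref{thm:surgery of semi-primitive disks for lens and lens}: such a subdisk, lying in one of the two solid tori, contributes to a word of $\partial E$ only a syllable of the form $y^{\epsilon} x^{\pm p} y^{\epsilon'}$ (or $x^{\epsilon} y^{\pm p} x^{\epsilon'}$), which is forbidden only when $\epsilon' = -\epsilon$; the same-sign configuration is not excluded by Lemmas \ref{lem:non-triviality and non-primitivity} and \ref{lem:semi-primitive disks and words}, and it genuinely occurs for semi-primitive disks --- e.g.\ the disk $E_{p_1}$ in that proof has word $(x^{p_2}y)^{p_1}$, full of such syllables. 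In Theorem \ref{thm:surgery of semi-primitive disks for lens and lens} the forbidden subwords appear only after the inductive construction of $E_1, \ldots, E_{p_1}$, and the final contradiction is not a forbidden subword at all but the inequality $|C \cap E_{p_1}| < |C \cap E_0|$ against minimality of $|C \cap E_0|$. This exposes the second problem: you minimize $|\partial E \cap (P \cap \Sigma)|$, the intersection with the \emph{sphere}, whereas the machinery you propose to reuse contradicts minimality of the intersection with the companion meridian disk $E_0$; nothing in that construction makes the newly produced Haken sphere intersect $\partial E$ fewer times than $P$ does. The paper instead minimizes $|D \cap E_0|$ over Haken spheres disjoint from $E$ and splits into the cases $D = E_0$ (done), $D \cap E_0 = \emptyset$ (the $x^2y^2$ contradiction), and $D \cap E_0 \neq \emptyset$ (the full induction with $D$ in the role of $C$), so the quantity being minimized is the one the induction actually decreases.
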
	
\begin{proof}
The uniqueness follows immediately from
Lemma \ref{lem:uniqueness of a disjoint semi-primitive disk}.
To show the existence of
a Haken sphere of $(V, W; \Sigma)$
disjoint from $D \cup E$,
we choose a Haken sphere $P$ among all Haken spheres
disjoint from $E$ so that $|D \cap E_0|$ is minimal as in the proof of
Theorem \ref{thm:surgery of semi-primitive disks for lens and lens}.
Also, we take the disks $E'$ and $E'_0$ in $W$ as in the
proof of Theorem \ref{thm:surgery of semi-primitive disks for lens and lens}.
Each element of $\pi_1(W)$ are represented by a word on
$\{ x, y \}$, where $x$ and $y$ are determined (up to sign) by
the meridian disks $E'$ and $E'_0$.
If $D = E_0$, we are done.
Assume that $D \neq E_0$ and $D \cap E_0 = \emptyset$.
Then the disk $D$ is the band sum of $E$ and $E_0$ along an arc, say $\alpha_0$, which connects $\partial E$ and $\partial E_0$. Since we assumed that $D$ is semi-primitive, the arc $\alpha_0$ is disjoint from $E'_0$ by the same reason to the case of the arc $\alpha_0$ in the proof of Theorem \ref{thm:surgery of semi-primitive disks for lens and lens} (after changing the orientations if necessary). Considering $\partial D$ as a circle lying in the $4$-holed sphere $\Sigma$ cut off by $\partial E \cup \partial E_0$, which is the same case to the circle $\partial E_1$ in $\Sigma_0$ in Theorem \ref{thm:surgery of semi-primitive disks for lens and lens}, we observe that a word determined by $\partial D$ must contain
a subword of the form $x^2 y^2$.
By Lemmas \ref{lem:non-triviality and non-primitivity}
and \ref{lem:semi-primitive disks and words}, the disk $D$ cannot be semi-primitive, a contradiction.
Finally, assume that $D \cap E_0 \neq \emptyset$.
Then by the same argument as the proof of
Theorem \ref{thm:surgery of semi-primitive disks for lens and lens}
for the disk $D$ instead of the outermost subdisk $C$,
we can deduce a contradiction.
\end{proof}	

\begin{lemma}
\label{lem:semi-primitive disk complex is one-dimensional}
Let $D$, $E$ and $F$ be pairwise disjoint, pairwise non-isotopic, non-separating disks in $V$.
If $D$ and $E$ are semi-primitive disks, then
$F$ is not a semi-primitive disk.
\end{lemma}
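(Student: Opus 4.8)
The plan is to assume, for contradiction, that $F$ is also semi-primitive, and then to exhibit a forbidden subword in a word determined by $\partial F$, contradicting Lemmas \ref{lem:non-triviality and non-primitivity} and \ref{lem:semi-primitive disks and words}.

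First I would apply Lemma \ref{lem:disjoint semi-primitive disks and reducing spheres} to the disjoint, non-isotopic semi-primitive disks $D$ and $E$ to obtain a Haken sphere $P$ of $(V, W; \Sigma)$ disjoint from $D \cup E$. This sphere cuts $V$ into two solid tori $V_1, V_2$ and $W$ into $W_1, W_2$. Being non-separating disks disjoint from the separating disk $P \cap V$, both $D$ and $E$ are meridian disks of these solid tori, and since they are non-isotopic they must be the meridians of distinct tori; say $E$ is the meridian of $V_1$ and $D$ the meridian of $V_2$, so that $D$ plays the role of the disk $E_0$ in the proof of Theorem \ref{thm:surgery of semi-primitive disks for lens and lens}. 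Taking the meridian disks $E'$ of $W_1$ and $E_0'$ of $W_2$ and assigning $x$ to $\partial E'$ and $y$ to $\partial E_0'$ as before, a word determined by $\partial E$ is $x^{p_2}$ and one determined by $\partial D$ is $y^{p_1}$, where $p_1, p_2 \geq 2$ since $M$ is the connected sum of two lens spaces.

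Next I would analyze the position of $F$. Because $E$ and $E_0 = D$ cut $V$ into a $3$-ball and $F$ is disjoint from $E \cup E_0$, the circle $\partial F$ lies in the $4$-holed sphere obtained by cutting $\Sigma$ along $\partial E \cup \partial E_0$. As $F$ is non-separating and non-isotopic to both $E$ and $E_0$, its boundary must pair a boundary circle coming from $\partial E$ with one coming from $\partial E_0$; equivalently, $F$ is a band sum of $E$ and $E_0$ along some arc $\alpha$, exactly as in the proof of Lemma \ref{lem:disjoint semi-primitive disks and reducing spheres}. By the same reasoning used there for the arc $\alpha_0$, the assumption that $F$ is semi-primitive forces $\alpha$ to be disjoint from both $\partial E'$ and $\partial E_0'$; otherwise a word determined by $\partial F$ would contain a subword of the form $y x^{p_2} y^{-1}$ or $x y^{p_1} x^{-1}$, which is impossible by Lemmas \ref{lem:non-triviality and non-primitivity} and \ref{lem:semi-primitive disks and words}.

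Consequently $\partial F$ determines the word $x^{p_2} y^{p_1}$ (after changing orientations if necessary). Since $p_1, p_2 \geq 2$, this word contains the subword $x^2 y^2$, so by Lemmas \ref{lem:non-triviality and non-primitivity} and \ref{lem:semi-primitive disks and words} the disk $F$ cannot be semi-primitive, a contradiction. I expect the step requiring the most care to be the verification that a non-separating disk disjoint from the cut system $\{E, E_0\}$ is necessarily such a band sum and that its band arc can be taken disjoint from the meridians $E', E_0'$, so that the resulting word is exactly $x^{p_2} y^{p_1}$; this is precisely where the hypothesis that both summands are genuine lens spaces (hence $p_1, p_2 \geq 2$, excluding $S^2 \times S^1$ summands) is essential.
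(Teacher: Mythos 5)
Your proof is correct and takes essentially the same route as the paper's: the paper also invokes Lemma \ref{lem:disjoint semi-primitive disks and reducing spheres} to get the Haken sphere disjoint from $D \cup E$, observes that $F$ must be the band sum of $D$ and $E$ along an arc, and then refers to the case ``$D \neq E_0$ and $D \cap E_0 = \emptyset$'' in the proof of Lemma \ref{lem:disjoint semi-primitive disks and reducing spheres}, which is precisely the word argument you spell out (the band arc avoids the meridians of $W$, so $\partial F$ yields $x^{p_2}y^{p_1} \supset x^2y^2$, contradicting Lemmas \ref{lem:non-triviality and non-primitivity} and \ref{lem:semi-primitive disks and words}). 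The only difference is that you unwind the paper's cross-reference into an explicit argument.
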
	
\begin{proof}
By Lemma \ref{lem:disjoint semi-primitive disks and reducing spheres},
there exists a (unique) Heken sphere $P$ of $(V, W; \Sigma)$
disjoint from $D \cup E$. Thus we have the meridian disks $D'$ and $E'$ of the two solid tori
$W$ cut off by $P \cap W$ that are disjoint from $P$. 
Then the non-separating disk $F$ is the band sum of $D$ and $E$ along an arc, say $\alpha_0$, which connects $\partial D$ and $\partial E$.
This is exactly the case of ``$D \neq E_0$ and $D \cap E_0 = \emptyset$'' in the proof of Lemma \ref{lem:disjoint semi-primitive disks and reducing spheres}.
Here $D$, $E$, $F$, $D'$ and $E'$ correspond to $E_0$, $E$, $D$, $E'_0$ and $E'$, respectively, in the proof of Lemma \ref{lem:disjoint semi-primitive disks and reducing spheres}.
Thus, by the same reasoning, we see that $F$ is not semi-primitive.
\end{proof}

\subsection{Connected sum of $S^2 \times S^1$ and a lens space}
\label{subsec:Semi-primitive disks and a genus two Heegaard splitting for
the connected sum of a lens space and S2 times S1}

Throughout this subsection, we always assume that
$(V, W; \Sigma)$ is a genus two Heegaard splitting for the
connected sum of $S^2 \times S^1$ and a lens space.
A non-separating disk $D$ in $V$ is called a {\it reducing disk} if
$\partial D$ bounds a disk in $W$.
We remark that a reducing disk is also a semi-primitive one 
and the boundary circle of a reducing disk represents the trivial element of $\pi_1(W)$.

\begin{lemma}
\label{lem:the uniqueness of the reducing disk}
Let $D$ be a reducing disk in $V$.
Let $E$ be a non-separating disk in $V$ that is not isotopic to $D$.
\begin{enumerate}
\item
If $E$ is disjoint from $D$, then there exists a Haken sphere of $(V, W; \Sigma)$
disjoint from $D \cup E$.
In particular, $E$ is a semi-primitive disk but is not a reducing disk.
\item
If $E$ intersects $D$, then
$E$ is not a semi-primitive disk.
\end{enumerate}
\end{lemma}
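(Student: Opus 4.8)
The plan is to run both statements through the word machinery of Lemmas \ref{lem:necessity condition to be a power of a primitive element}, \ref{lem:non-triviality and non-primitivity} and \ref{lem:semi-primitive disks and words}, exploiting the key feature of a reducing disk that its boundary bounds disks in \emph{both} handlebodies. Since $D$ is a reducing disk, $\partial D$ bounds a disk $D'$ in $W$; because $\partial D$ is non-separating in $\Sigma$ (it bounds the non-separating disk $D$ in $V$), a separating disk could not have it as boundary, so $D'$ is non-separating in $W$ as well.

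For part (1), first I would observe that $D'$ is a non-separating disk in $W$ disjoint from $E$ (its boundary $\partial D$ is disjoint from $\partial E$), so $E$ is semi-primitive by Lemma \ref{lem:sufficiency and necessity condition to be semi-primitive}. To upgrade this to a Haken sphere disjoint from $D \cup E$, I would note that $E$, being a non-separating disk in $V$ disjoint from and not isotopic to $D$, is a meridian disk of the solid torus obtained by cutting $V$ along $D$; hence $\{D,E\}$ is a cut system and $\partial D \cup \partial E$ cuts $\Sigma$ into a $4$-holed sphere. Taking $E' := D'$ in the construction from the proof of Lemma \ref{lem:sufficiency and necessity condition to be semi-primitive}, the resulting curve $\gamma$ is the frontier of a regular neighborhood of the two $\partial E$-circles together with a connecting arc, so it is disjoint from $\partial D$ as well, and the associated Haken sphere $P$ is therefore disjoint from $D \cup E$. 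Finally $E$ is not a reducing disk: the sphere $P$ exhibits $M$ as the connected sum of the summand containing $D$ as a meridian --- which is $S^2 \times S^1$ since the meridian $\partial D$ of that solid torus bounds a disk in $W$ on the same side --- and the summand containing $E$ as a meridian; were $E$ reducing, the latter summand would be $S^2 \times S^1$ too, forcing $M \cong (S^2 \times S^1)\,\#\,(S^2 \times S^1)$ and contradicting the presence of a genuine lens space summand.

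For part (2), suppose for contradiction that $E$ is semi-primitive while meeting $D$ minimally and essentially. I would complete $D'$ to a cut system $\{D', A\}$ of $W$ and read off the word $w$ determined by $\partial E$ with respect to this system, letting $x$ record the crossings of $\partial E$ with $\partial D' = \partial D$. Choosing the representatives in minimal position makes $w$ cyclically reduced. The point is a homological one: both $\partial E$ and $\partial D$ bound disks in $V$, so their classes lie in the Lagrangian $\ker\big(H_1(\Sigma) \to H_1(V)\big)$, whence the algebraic intersection number $[\partial E]\cdot[\partial D]$ vanishes; on the other hand the geometric intersection $|\partial E \cap \partial D|$ is positive, as $E$ and $D$ intersect. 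Consequently the $x$-exponents of $w$ sum to zero while at least one occurs, so $w$ contains both $x$ and $x^{-1}$. By Lemma \ref{lem:necessity condition to be a power of a primitive element} the element of $\pi_1(W)$ represented by $\partial E$ is then neither trivial nor a power of a primitive element, contradicting Lemma \ref{lem:semi-primitive disks and words}; hence $E$ is not semi-primitive.

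The main obstacle is organizational rather than conceptual: in part (1) I must justify that $E$ really is a meridian disk of the solid torus $V$ cut along $D$ (ruling out that it is inessential or isotopic to $D$ there), so that $\{D,E\}$ is a cut system and the $4$-holed sphere construction of Lemma \ref{lem:sufficiency and necessity condition to be semi-primitive} applies; and in part (2) I must arrange $\{D', A\}$ and $\partial E$ in simultaneous minimal position so that $w$ is cyclically reduced and its $x$-letters faithfully count, with sign, the crossings with $\partial D$. Once these positioning points are secured, the Lagrangian observation makes part (2) immediate, and the disjoint-disk construction makes part (1) immediate.
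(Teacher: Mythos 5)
Your part (1) is essentially the paper's own argument (cut $\Sigma$ along $\partial D \cup \partial E$, join the two $\partial D$-circles by an arc, and use that $\partial D$ bounds disks on both sides to cap the frontier curve off in both handlebodies); the extra paragraph ruling out that $E$ is reducing is a reasonable supplement to what the paper leaves implicit. The problem is part (2).

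The step ``choosing the representatives in minimal position makes $w$ cyclically reduced'' is false, and the whole argument collapses there. Minimal position of $\partial E$ with respect to $\partial D' \cup \partial A$ does not even guarantee that the read-off word is freely reduced: a cancelling pair $x^{\pm 1}x^{\mp 1}$ corresponds to an arc of $\partial E$ in the cut-open surface with both endpoints on the same boundary circle, and such an arc can be \emph{essential} (it need not create a bigon). Concretely, the boundary of a regular neighborhood of $\partial D' \cup \delta$, where $\delta$ is an arc with endpoints on $\partial D'$ running once over the other handle, is an essential curve in minimal position with the cut system, yet its word is $yy^{-1}$. Because of this, your homological argument proves less than you need: the $x$-exponent sum is indeed $0$ and is preserved under reduction, but it is entirely possible that \emph{all} $x$-letters cancel in pairs during free and cyclic reduction, leaving a power of $y$ or the empty word --- that is, a power of a primitive element or the trivial element --- in which case Lemma \ref{lem:semi-primitive disks and words} gives no contradiction. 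Lemma \ref{lem:necessity condition to be a power of a primitive element} simply cannot be invoked on an unreduced word. This is not an ``organizational'' positioning issue: it is exactly the difficulty the paper's proof is designed to overcome. The paper takes an outermost subdisk $C$ of $E$ cut off by $D \cap E$, produces an auxiliary semi-primitive disk $E_1$ disjoint from $C \cup D$, and then \emph{builds} a tailored dual system $\{D', \widehat{E}_1'\}$ in $W$ (repeatedly band-summing $E_1'$ with $D'$ until it misses the relevant arc $\alpha_0$), with respect to which the word of $\partial E$ visibly contains a subword $x y^p x^{-1}$; Lemma \ref{lem:non-triviality and non-primitivity} is then exactly the statement that such a subword, for an embedded curve, forces every word it determines to be cyclically reduced, after which non-primitivity follows. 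Your proposal has no substitute for this construction, so part (2) as written has a genuine gap.
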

\begin{proof}
\noindent (1)
Let $\Sigma'$ be the 4-holed sphere
obtained by cutting $\Sigma$ along $\partial D \cup \partial E$.
Let $d^+$ and $d^-$
be the two boundary circles of $\Sigma'$ coming from
$\partial D$.
Let $\alpha_P$ be an arbitrary simple arc in $\Sigma'$ connecting $d^+$ and $d^-$.
Since $D$ is a reducing disk,
the frontier $\gamma_P$ of a regular neighborhood of $d^+ \cup \alpha_P \cup d^-$
bounds a disk in each of $V$ and $W$.
This implies that there exists a Haken sphere $P$ of $(V, W; \Sigma)$ such that
$P \cap \Sigma = \gamma_P$, which is disjoint from $D \cup E$.

\noindent (2)
Let $D'$ be a disk in $W$ bounded by $\partial D$.
Let $C$ be an outermost subdisk of $E$ cut off by $D \cap E$.
Then a standard cut-and-paste argument allows us to have
a non-separating disk $E_1$ in $V$ that is not isotopic to $D$ and
disjoint from $C \cup D$.
By (1), $E_1$ is a semi-primitive disk.
Let $P$ be a Haken sphere of $(V, W; \Sigma)$ disjoint from $D \cup E_1$.
Let $E_1'$ be the semi-primitive disk in $W$ disjoint from $P$ that is not isotopic to $D'$.
Let $\Sigma'$ be the 4-holed sphere obtained by
cutting $\Sigma$ along $\partial D \cup \partial E_1$.
Let $d^+$ and $d^-$ (${e_1}^+$ and ${e_1}^-$, respectively)
be the two boundary circles of $\Sigma'$ coming from
$\partial D$ ($\partial E_1$, respectively).
We note that $\partial E_1' \cap \Sigma'$ cuts $\Sigma'$ into a finite number of rectangles and
a single rectangle with two holes $d^+$ and $d^-$.
See Figure \ref{fig:band_sum} (i).
\begin{figure}[htbp]
\begin{center}
\includegraphics[width=11cm,clip]{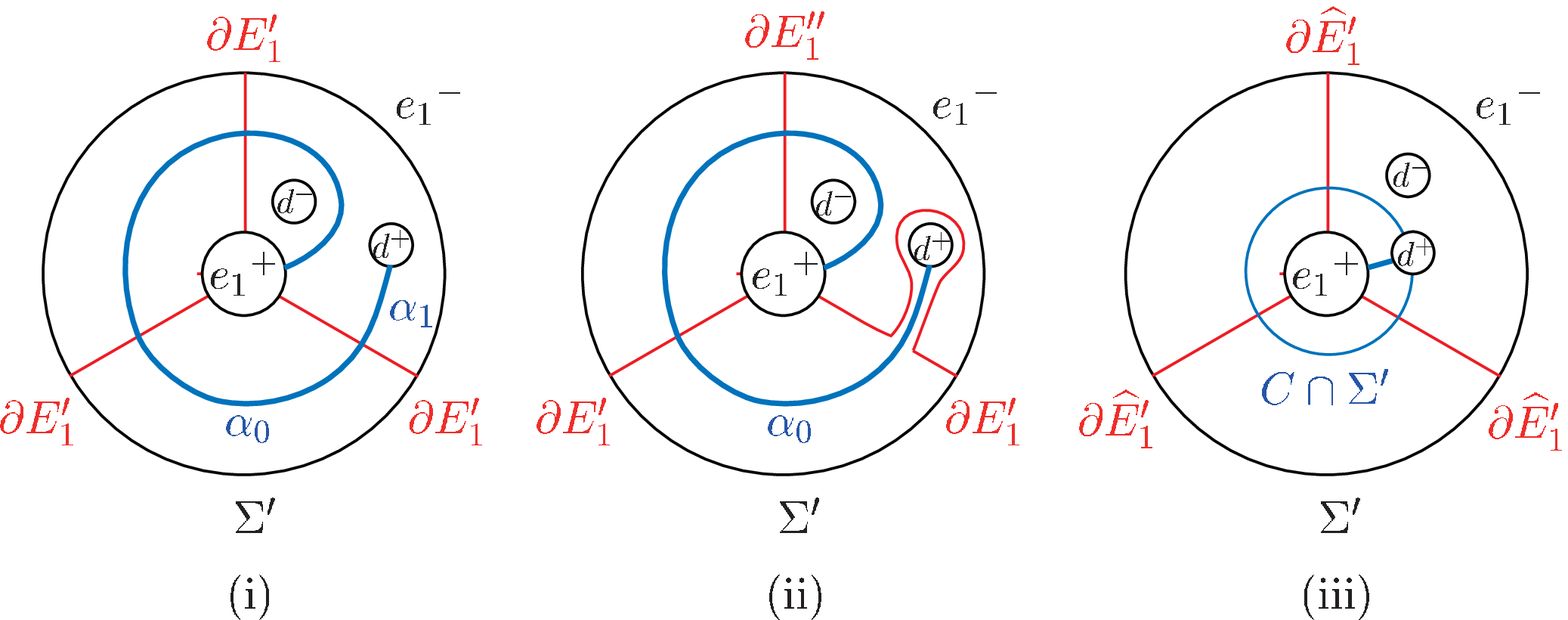}
\caption{}
\label{fig:band_sum}
\end{center}
\end{figure}
Then $C \cap \Sigma'$ is the frontier of a regular neighborhood of the union
of an arc $\alpha_0$ in $\Sigma'$ connecting one of $d^+$ and $d^-$, say $d^+$ and one of
$e_1^+$ and $e_1^-$, say $e_1^+$, and the boundary circle $e_1^+$.

Assume that $\alpha_0$ meets $\partial E_1'$.
Let $\alpha_1$ be a subarc of $\alpha_0$ connecting $d^+$ and $\partial E_1'$ such that
the interior of $\alpha_1$ is disjoint from $\partial E_1'$.
Let $E_1'' \subset W$ be the band sum of $E_1'$ and $D'$ along $\alpha_1$.
$E_1''$ is a semi-primitive disk and we have $|\alpha_0 \cap \partial E_1''| < |\alpha_0 \cap \partial E_1'|$.
See Figure \ref{fig:band_sum} (ii).
Repeating this process finitely many times, we obtain a semi-primitive disk $\widehat{E}_1'$ in $W$
disjoint from both $D$ and $\alpha_0$.

We give letters $x$ and $y$ to the circles $\partial D'$ and $\partial \widehat{E}_1'$, respectively,
after fixing an orientation of each of them.
Then a word on $\{ x ,y \}$ determined by $\partial E$ contains
a subword of the form $x y^p x^{-1}$,
which is determined by the subarc $C \cap \Sigma'$ after changing the orientations
if necessary.
See Figure \ref{fig:band_sum} (iii).
by Lemma \ref{lem:non-triviality and non-primitivity}, $E$ is neither
a reducing disk nor a semi-primitive disk.
\end{proof}

By Lemma \ref{lem:the uniqueness of the reducing disk},
$(V, W; \Sigma)$ admits a unique reducing disk.
The next lemma follows immediately from
the definition of a reducing disk and
the proof of Lemma \ref{lem:sufficiency and necessity condition to be semi-primitive}.
\begin{lemma}
\label{lem:disjoint}
Let $D$ be the reducing disk in $V$.
Then any non-reducing, semi-primitive disk in $V$ is disjoint from $D$ up to isotopy.
\end{lemma}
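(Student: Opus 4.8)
The plan is to obtain the statement as an immediate consequence of Lemma \ref{lem:the uniqueness of the reducing disk}(2), which already carries all the essential geometric content. Let $E$ be an arbitrary non-reducing, semi-primitive disk in $V$, and isotope $E$ so that $\partial D$ and $\partial E$ intersect minimally and transversely on $\Sigma$.

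First I would verify that the hypotheses of Lemma \ref{lem:the uniqueness of the reducing disk} apply to the pair $(D, E)$. By definition a semi-primitive disk is non-separating, so $E$ is non-separating. Moreover $E$ is not isotopic to $D$: being a reducing disk is a property of the isotopy class of the disk, since whether $\partial E$ bounds a disk in $W$ is unchanged under isotopy of $E$. As $D$ is reducing while $E$ is not, the two cannot share an isotopy class. Hence $E$ is a non-separating disk not isotopic to the reducing disk $D$, as required.

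Now I would invoke the contrapositive of Lemma \ref{lem:the uniqueness of the reducing disk}(2): that part asserts that if $E$ intersects $D$ essentially, then $E$ is not semi-primitive. Since by hypothesis $E$ \emph{is} semi-primitive, it cannot intersect $D$ in minimal position; that is, $E$ is disjoint from $D$ up to isotopy, which is exactly the assertion.

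I expect no genuine obstacle, as the lemma is essentially a repackaging of Lemma \ref{lem:the uniqueness of the reducing disk}; the only point needing care is the observation that a reducing and a non-reducing disk are never isotopic, which is what licenses the dichotomy of that lemma and which follows directly from the definition of a reducing disk. Should a self-contained argument be preferred, one could instead proceed as indicated just before the statement: fix a Haken sphere $P$ disjoint from $E$ furnished by semi-primitivity, read off a word determined by $\partial E$ on the meridians of the two solid tori into which $P$ cuts $W$ via the construction in the proof of Lemma \ref{lem:sufficiency and necessity condition to be semi-primitive}, and check that any essential intersection of $E$ with $D$ would force that word to contain a subword of the form $x y^p x^{-1}$, contradicting Lemmas \ref{lem:non-triviality and non-primitivity} and \ref{lem:semi-primitive disks and words}.
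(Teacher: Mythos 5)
Your proposal is correct, but it follows a different derivation from the one the paper indicates. The paper offers no standalone proof of this lemma: right after establishing that the reducing disk is unique, it simply asserts that the statement ``follows immediately from the definition of a reducing disk and the proof of Lemma \ref{lem:sufficiency and necessity condition to be semi-primitive}.'' The argument this points to is structural rather than formal: semi-primitivity of $E$ supplies a Haken sphere $P$ disjoint from $E$, so $E$ is a meridian disk of one of the two solid tori into which $P$ cuts $V$; the meridian disk of the solid torus lying in the punctured $S^2 \times S^1$ summand is a reducing disk (its boundary bounds a meridian disk of the corresponding solid torus in $W$), hence is isotopic to $D$ by uniqueness; since $E$ is non-reducing, $E$ must be the meridian disk of the other solid torus, which is disjoint from $D$. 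You instead take the contrapositive of Lemma \ref{lem:the uniqueness of the reducing disk}(2), after verifying its hypotheses: a semi-primitive disk is non-separating by definition, and a non-reducing disk cannot be isotopic to the reducing disk $D$ because being a reducing disk is an isotopy invariant. This is valid and non-circular (that lemma is proved earlier, independently of the present statement), and it is leaner: it does not even require the uniqueness of $D$, since all the geometric work --- the $x y^p x^{-1}$ word argument --- is already packaged inside the proof of Lemma \ref{lem:the uniqueness of the reducing disk}(2). What the paper's route buys is an explicit Haken sphere disjoint from $D \cup E$; but once disjointness is known, such a sphere is furnished anyway by part (1) of that same lemma, so nothing essential is lost. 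Your fallback self-contained sketch at the end coincides with the mechanism underlying the paper's citation, so both readings are covered.
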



\section{The complex of semi-primitive disks}
\label{sec:The complex of semi-primitive disks}

Let $V$ be a handlebody.
The {\it disk complex} $\mathcal{K}(V)$ of $V$
is the simplicial complex whose vertices are the isotopy classes of essential disks in $V$ such that
the collection of distinct $k+1$ vertices spans a
$k$-simplex if they admit a
set of pairwise disjoint representatives.
The full-subcomplex $\mathcal{D}(V)$ of $\mathcal{K}(D)$
spanned by the vertices corresponding to non-separating disks is called the
{\it non-separating disk complex} of $V$.
In \cite{McC91}, it is shown that both
$\mathcal{K}(V)$ and $\mathcal{D}(V)$ are contractible.
Moreover, we have the following theorem.
\begin{theorem}[\cite{McC91, Cho08}]
\label{thm:surgery and contractibility}
A full subcomplex $\mathcal{L}$ of the disk complex $\mathcal{K}(V)$
is contractible if, given any two representative disks $E$ and $D$ of vertices of $\mathcal{L}$
intersecting each other transversely and minimally,
at least one of the
disks from surgery on $E$ along an outermost subdisk of $D$ cut off by
$D \cap E$ represents a vertex of $\mathcal{L}$.
\end{theorem}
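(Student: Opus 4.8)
The plan is to prove that $\mathcal{L}$ is weakly contractible and then invoke Whitehead's theorem, since a nonempty simplicial complex all of whose homotopy groups vanish is contractible. So I would fix $n \geq 0$ and a map $f \colon S^n \to \mathcal{L}$ and show it is null-homotopic. By simplicial approximation I may assume $f$ is simplicial with respect to a triangulation of $S^n$; its image is then a finite full subcomplex of $\mathcal{L}$, and I choose representatives for its vertices that are pairwise in minimal position. I also fix once and for all an auxiliary vertex $D$ of $\mathcal{L}$ and put every image disk in minimal position with $D$. The whole argument is McCullough's surgery flow directed toward $D$.

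The induction is on the complexity $c(f) = \max\{\, |\partial E \cap \partial D| : E \text{ a vertex of the image}\,\}$, refined lexicographically by the number of image vertices attaining this maximum. For the base case $c(f) = 0$, every image disk is disjoint from $D$; since the disks spanning any image simplex are pairwise disjoint and each is disjoint from $D$, and $\mathcal{L}$ is full, that simplex together with $D$ spans a simplex of $\mathcal{L}$. Hence the image lies in the star $\mathrm{St}(D)$, which is a cone with apex $D$ and therefore contractible, so $f$ is null-homotopic.

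For the inductive step, suppose $c(f) = N > 0$ and let $E$ be an image vertex with $|\partial E \cap \partial D| = N$. Surgering $E$ along an outermost subdisk of $D$ cut off by $D \cap E$ produces two disks, at least one of which, call it $E'$, is a vertex of $\mathcal{L}$ by hypothesis; moreover surgery along an outermost disk strictly lowers the intersection with $D$, so $|\partial E' \cap \partial D| < N$, and $E'$ is disjoint from $E$. I would then replace $E$ by $E'$ throughout the image by a homotopy of $f$ supported on the preimage of $\mathrm{St}(E)$: since $[E,E']$ is an edge of $\mathcal{L}$, coning the portion of $f$ that hits $E$ across this edge into the subcomplex $\{E'\} \ast \mathrm{lk}(E)$ yields a new simplicial map whose complexity is strictly smaller in the lexicographic order (either $N$ drops, or the number of vertices realizing $N$ drops). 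Iterating drives $c(f)$ to $0$, reducing to the base case.

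The main obstacle is exactly this replacement move: for the coning homotopy to remain inside $\mathcal{L}$, the surgered disk $E'$ must be disjoint not merely from $E$ but from every disk in the link of $E$ in the image, so that $\{E'\} \ast \mathrm{lk}(E)$ is a genuine subcomplex of $\mathcal{L}$. This is the delicate point, since a neighboring disk may itself meet $D$ inside the region swept out by the surgery. I would resolve it in the standard way by performing the surgeries \emph{coherently} toward the single fixed disk $D$: working in a product neighborhood $D \times [-1,1]$ and pushing the new sheets of the surgered disks off $D$ to nested levels makes the surgered disks of any one simplex pairwise disjoint, so that the modification is realized by an honest simplicial homotopy; when several link disks also realize the maximal complexity they are surgered simultaneously, and a secondary induction controls the remaining intersections. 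Verifying this disjointness together with the strict drop in complexity is the technical heart of the argument, and everything else is formal.
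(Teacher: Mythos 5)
Your skeleton --- Whitehead's theorem plus simplicial approximation, a fixed base vertex $D$, surgery directed toward $D$ to push any sphere map into the star of $D$, and contractibility of that star as the base case --- is exactly the architecture of the argument this theorem is quoted from (the paper itself gives no proof; it defers to McCullough and Cho). The base case and the formal coning/contiguity step across the edge $[E,E']$ are fine. The gap is precisely at the point you yourself flag as delicate, and your proposed repair does not work. If a disk $F$ in the link of $E$ meets the interior of the outermost subdisk $C \subset D$ used for the surgery, these are honest transverse intersections of $F$ with $D$ lying inside $C$; any parallel copy of $C$, pushed to whatever level of a product neighborhood $D \times [-1,1]$, still meets $F$, because near $D$ the disk $F$ is a product (arcs of $F \cap D$) $\times [-1,1]$. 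So ``nested levels'' cannot make $E'$ disjoint from the link of $E$. Simultaneous surgery of several disks of a simplex does not rescue this either; worse, the hypothesis only guarantees that \emph{one} of the two disks produced by each surgery lies in $\mathcal{L}$, and you cannot control which one, so coordinating simultaneous choices is an additional unaddressed problem.

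The correct resolution, which is the actual content of the cited proof, is to choose \emph{which} disk to surger and along \emph{which} outermost subdisk more carefully. If the outermost subdisk $C$ cut off by $D \cap E$ contains an arc of $D \cap F$ for some image disk $F$ disjoint from $E$, pass to an outermost subdisk of $D$ cut off by $D \cap F$ that is contained in $C$ (possible because arcs of $D \cap F$ cannot cross $\partial C$: they miss $C \cap E$ since $F \cap E = \emptyset$), and repeat. This descent is finite, and it terminates in a pair $(E_0, C_0)$ such that the interior of $C_0$ is disjoint from every image disk that is disjoint from $E_0$; for such a pair the surgered disk is automatically disjoint from all of the link of $E_0$, and the contiguity move stays inside $\mathcal{L}$ by fullness. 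Note that $E_0$ need not be a vertex realizing your maximal complexity $N$, so your lexicographic induction (max, number of vertices attaining it) can stall; one uses instead the total complexity $\sum_{E} |E \cap D|$ over image vertices, which strictly drops whenever any vertex is replaced by a surgered one. With these two corrections --- the descent to a ``good'' outermost subdisk in place of the level-pushing argument, and the total rather than maximal intersection number as the induction quantity --- your outline becomes the standard proof.
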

Let $M_1$ be a lens space or $S^2 \times S^1$, and let $M_2$ be a lens space.
Let $(V, W; \Sigma)$ be a genus two Heegaard splitting for $M_1 \# M_2$.
The {\it semi-primitive disk complex} $\mathcal{SP}(V)$ of $V$
is the full subcomplex of $\mathcal{D}(V)$ spanned by the vertices corresponding to
semi-primitive disks of $V$.
We remark that the Goeritz group $\mathcal{G}$ of $(V, W; \Sigma)$ acts
on $\mathcal{SP}(V)$ simplicially.

\begin{theorem}
\label{thm:contractibility for lens and lens}
Let $M_1$ be a lens space or $S^2 \times S^1$, and let $M_2$ be a lens space.
Let $(V, W; \Sigma)$ be a genus two Heegaard splitting for $M_1 \# M_2$.
\begin{enumerate}
\item
If $M_1$ is a lens space,
then $\mathcal{SP}(V)$ is a tree.
\item
If $M_1 = S^2 \times S^1$,
then $\mathcal{SP}(V)$ is the cone of a tree.
\end{enumerate}
\end{theorem}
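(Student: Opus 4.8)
The plan is to apply Theorem~\ref{thm:surgery and contractibility} together with the structural results established in Section~\ref{sec:Semi-primitive disks} to pin down the exact combinatorial type of $\mathcal{SP}(V)$. First I would establish contractibility of $\mathcal{SP}(V)$ as a uniform first step for both cases. By definition $\mathcal{SP}(V)$ is a full subcomplex of $\mathcal{D}(V)$, so to invoke Theorem~\ref{thm:surgery and contractibility} I need precisely the surgery property: given two semi-primitive disks $D$ and $E$ meeting transversely and minimally, at least one of the two disks obtained from $E$ by surgery along an outermost subdisk of $D$ is again semi-primitive. When $M_1$ is a lens space this is exactly Theorem~\ref{thm:surgery of semi-primitive disks for lens and lens}. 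When $M_1 = S^2 \times S^1$, I would handle the surgery property by cases according to whether the surgered disks involve the unique reducing disk, appealing to Lemma~\ref{lem:the uniqueness of the reducing disk}: any non-separating disk disjoint from the reducing disk is semi-primitive, so one of the surgery results will always qualify. In either case Theorem~\ref{thm:surgery and contractibility} gives contractibility, which already handles the homotopy type.

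The substantive content is upgrading ``contractible'' to the explicit descriptions ``tree'' and ``cone of a tree.'' For part~(1), contractibility of a simplicial complex plus one-dimensionality forces it to be a tree, so I would show $\mathcal{SP}(V)$ is $1$-dimensional, i.e.\ has no $2$-simplices. A $2$-simplex would be a triple of pairwise disjoint, pairwise non-isotopic semi-primitive disks $D, E, F$, and Lemma~\ref{lem:semi-primitive disk complex is one-dimensional} says no such triple exists. Hence $\mathcal{SP}(V)$ is at most $1$-dimensional; being contractible and connected, it is a tree. For part~(2), I would isolate the unique reducing disk, call it $D_0$, whose existence and uniqueness come from Lemma~\ref{lem:the uniqueness of the reducing disk}. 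By Lemma~\ref{lem:disjoint}, every non-reducing semi-primitive disk is disjoint from $D_0$, so $D_0$ is joined by an edge to every other vertex of $\mathcal{SP}(V)$; that is, $D_0$ is a cone vertex. The link of $D_0$ is the full subcomplex spanned by the non-reducing semi-primitive disks, and I would argue this link is itself a tree by the same two ingredients as in part~(1): one-dimensionality via Lemma~\ref{lem:semi-primitive disk complex is one-dimensional}, and contractibility of the link.

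The main obstacle I anticipate is confirming that the link of $D_0$ is a tree rather than merely verifying the cone structure. The cone point is easy once Lemma~\ref{lem:disjoint} is in hand, but I must ensure the non-reducing semi-primitive disks, among themselves, still satisfy the surgery property so that their full subcomplex is contractible; the surgery disks might a priori be reducing disks, which would break the induction if not controlled. Here I would use that the reducing disk is unique (Lemma~\ref{lem:the uniqueness of the reducing disk}) and that two disjoint non-isotopic semi-primitive disks determine a Haken sphere splitting off two solid tori, one per summand; the $S^2 \times S^1$ summand corresponds to the reducing disk, and surgering two non-reducing disks against the $L(p,q)$ side reduces to the lens-space computation of Theorem~\ref{thm:surgery of semi-primitive disks for lens and lens}. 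Once the link is shown to be a tree, $\mathcal{SP}(V)$ is the cone over it, completing part~(2). The remaining steps---deducing ``tree'' from ``contractible and one-dimensional'' and assembling the cone---are formal.
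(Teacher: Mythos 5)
Your part~(1) coincides with the paper's proof verbatim: contractibility via Theorems~\ref{thm:surgery of semi-primitive disks for lens and lens} and~\ref{thm:surgery and contractibility}, one-dimensionality via Lemma~\ref{lem:semi-primitive disk complex is one-dimensional}, and the formal fact that a contractible $1$-complex is a tree. Your identification of the cone point in part~(2), via Lemmas~\ref{lem:the uniqueness of the reducing disk} and~\ref{lem:disjoint}, also matches the paper.

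The gap is in your argument that the link of the reducing disk $D_0$ is a tree. You invoke Lemma~\ref{lem:semi-primitive disk complex is one-dimensional} for one-dimensionality of this link, but that lemma is proved under the standing assumption of Subsection~\ref{subsec:Semi-primitive disks and a genus two Heegaard splitting for the connected sum of two lens spaces} that \emph{both} summands are lens spaces, and it is in fact false when $M_1 = S^2 \times S^1$: the reducing disk $D_0$ together with two disjoint non-reducing semi-primitive disks (the disks $E_1$, $E_2$ of Lemma~\ref{lem:semi-primitive disks are interchangible}) are three pairwise disjoint, pairwise non-isotopic semi-primitive disks --- this is exactly why $\mathcal{SP}(V)$ is two-dimensional in this case. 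The statement you actually need (no three pairwise disjoint, non-isotopic, \emph{non-reducing} semi-primitive disks) is true, but for a different reason: by Lemma~\ref{lem:disjoint} all such disks are disjoint from $D_0$, and a genus two handlebody admits at most three pairwise disjoint, pairwise non-isotopic non-separating disks. Likewise, your sketch of the surgery property inside the link leans on two-lens-space results (Theorem~\ref{thm:surgery of semi-primitive disks for lens and lens}, and implicitly Lemma~\ref{lem:disjoint semi-primitive disks and reducing spheres}) that are not available here; the workable argument is that both disks lie in the solid torus obtained by cutting $V$ along $D_0$, where a homology count shows at least one surgered disk is non-separating in that solid torus, hence non-reducing semi-primitive by Lemma~\ref{lem:the uniqueness of the reducing disk}(1). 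The paper sidesteps all of this with one observation you missed: by Lemma~\ref{lem:the uniqueness of the reducing disk}(1), \emph{every} non-separating disk disjoint from and non-isotopic to $D_0$ is a non-reducing semi-primitive disk, so the link of $D_0$ in $\mathcal{SP}(V)$ is the full link of the vertex $D_0$ in $\mathcal{D}(V)$, which is a tree by \cite{Cho08,McC91}. Your outline is repairable along these lines, but as written the two key steps for the link cite lemmas outside their hypotheses.
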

\begin{proof}
\noindent (1)  That $\mathcal{SP}(V)$ is contractible is a straightforward consequence of
Theorems \ref{thm:surgery of semi-primitive disks for lens and lens} and
\ref{thm:surgery and contractibility}.  That it is a 1-complex follows from Lemma \ref{lem:semi-primitive disk complex is one-dimensional}. \\
(2)
Let $D$ be the unique reducing disk in $V$.
Let $\mathcal{SP}_D(V)$ denote the full subcomplex of
$\mathcal{D}(V)$ spanned by the vertices corresponding to non-reducing semi-primitive disks.
By Lemmas \ref{lem:the uniqueness of the reducing disk} and \ref{lem:disjoint},
the complex $\mathcal{SP}_D(V)$ is the link of the vertex corresponding to $D$ in $\mathcal{D}(V)$.
It is shown in \cite{Cho08,McC91} that
the link of any vertex of $\mathcal{D}(V)$ is a tree, and hence $\mathcal{SP}_D(V)$ is a tree.
\end{proof}

\section{The complex of Haken spheres}
\label{sec:The complex of Haken spheres}
Let $(V, W; \Sigma)$ be a genus two Heegaard splitting for a closed orientable 3-manifold $M$.
The {\it complex $\mathcal{H}$ of Haken spheres}
of the splitting $(V, W; \Sigma)$
is defined to be the simplicial complex whose vertices
consists of the isotopy classes of Haken spheres such that the collection $P_0, P_1, \ldots, P_k$  of distinct $k+1$ vertices spans a
$k$-simplex if
$|P_i \cap \Sigma \cap P_j| = 4$ for all $0 \leqslant i < j \leqslant k$.
It is shown that the complex of Haken spheres of the genus two splitting for $S^3$ is connected by Scharlemann \cite{Sch04}, and it turns out that the complex actually deformation retracts to a tree from the works \cite{Akb08} and \cite{Cho08}.
Lei \cite{Lei05} and Lei-Zhang \cite{LZ04} showed
that the complex of Haken spheres of the genus two splitting for a non-prime $3$-manifold is connected.
In this section, we refine the results of Lei and Lei-Zhang.
That is, we show that the complexes of Haken spheres for non-prime $3$-manifolds are connected in a new way, and further show that they are actually contractible.
We use the results on the semi-primitive disk complexes developed in the previous section.

\begin{theorem}
\label{cor:contractibility of sphere complexes}
Let $M_1$ be a lens space or $S^2 \times S^1$, and let $M_2$ be a lens space.
Let $(V, W; \Sigma)$ be a genus two Heegaard splitting for $M_1 \# M_2$.
Then the complex $\mathcal{H}$ of Haken spheres of the splitting $(V, W; \Sigma)$ is
contractible.
The dimension of $\mathcal{H}$ is $1$, that is, $\mathcal{H}$ is a tree, if $M_1$ is a lens space,
and is $3$ if $M_1$ is $S^2 \times S^1$.
\end{theorem}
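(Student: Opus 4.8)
The plan is to transfer the contractibility of the semi-primitive disk complex, established in Theorem~\ref{thm:contractibility for lens and lens}, to the Haken sphere complex $\mathcal{H}$ by encoding each Haken sphere through the meridian disks it produces in $V$. If $P$ is a Haken sphere, then $P \cap V$ is a separating disk cutting $V$ into two solid tori $V_1$ and $V_2$, whose meridian disks $E_1$ and $E_2$ are disjoint, non-isotopic and non-separating in $V$; each is semi-primitive because $P$ itself is a Haken sphere disjoint from it. Hence $\{E_1, E_2\}$ spans an edge of $\mathcal{SP}(V)$, and conversely every disjoint, non-isotopic pair of semi-primitive disks arises this way. First I would make this dictionary precise in each case: when $M_1$ is a lens space, Lemma~\ref{lem:disjoint semi-primitive disks and reducing spheres} promotes $P \mapsto \{E_1, E_2\}$ to a bijection between the vertices of $\mathcal{H}$ and the edges of the tree $\mathcal{SP}(V)$; when $M_1 = S^2 \times S^1$, the $S^2 \times S^1$ summand forces parallel meridians on one side, so one of $E_1, E_2$ is the unique reducing disk $D$ by Lemma~\ref{lem:the uniqueness of the reducing disk}, while the same lemma shows that the pair no longer determines $P$, the remaining freedom being exactly the winding described in its proof.

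Next I would read off the simplicial structure of $\mathcal{H}$ through this dictionary. The essential geometric step is to decide when $|P \cap \Sigma \cap P'| = 4$: I would put $P$ and $P'$ in minimal position, take an outermost subdisk of $P \cap V$ cut off by $P' \cap V$, and run the cut-and-paste and word-length machinery of Lemmas~\ref{lem:non-triviality and non-primitivity} and~\ref{lem:semi-primitive disks and words}, exactly as in the proof of Theorem~\ref{thm:surgery of semi-primitive disks for lens and lens}. This should show that a collection of Haken spheres spans a simplex of $\mathcal{H}$ precisely when the associated edges of $\mathcal{SP}(V)$ lie in a controlled region: when $M_1$ is a lens space they must share a common meridian disk, and when $M_1 = S^2 \times S^1$ they all contain $D$, so that compatibility is governed instead by the winding permitted by Lemma~\ref{lem:the uniqueness of the reducing disk}. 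With the adjacency understood, I would define a map $|\mathcal{H}| \to |\mathcal{SP}(V)|$ sending each Haken sphere to the barycenter of its meridian edge, extend it over the simplices of $\mathcal{H}$, and build a homotopy inverse; since $\mathcal{SP}(V)$ is contractible this yields the contractibility (and in particular the connectivity, reproving Lei and Lei--Zhang) of $\mathcal{H}$.

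For the dimension I would bound the size of a maximal family of Haken spheres that meet pairwise in exactly four points, translate it through the dictionary, and count disks directly. When $M_1$ is a lens space the rigidity of Lemma~\ref{lem:disjoint semi-primitive disks and reducing spheres} prevents three Haken spheres from meeting pairwise in four points, so $\mathcal{H}$ is $1$-dimensional; being contractible, it is then a tree. When $M_1 = S^2 \times S^1$, the reducing sphere of the $S^2 \times S^1$ summand together with the winding freedom supplies additional independent directions, and a direct count should pin the largest such family at four spheres, giving a top simplex of dimension $3$.

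The hard part will be the adjacency analysis in the middle step: isolating precisely which tuples of Haken spheres meet pairwise in four points, and then verifying that the resulting map is a genuine homotopy equivalence and not merely a vertex bijection. In particular, the jump from a tree when $M_1$ is a lens space to a $3$-dimensional complex when $M_1 = S^2 \times S^1$ has to be extracted from the non-rigidity recorded in Lemma~\ref{lem:the uniqueness of the reducing disk}, and checking that the extra simplices produced by the $S^2 \times S^1$ factor assemble into a contractible complex, rather than one that is merely connected, is where the genuine work lies.
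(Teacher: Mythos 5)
Your opening dictionary is correct and agrees with the paper: by Lemma \ref{lem:disjoint semi-primitive disks and reducing spheres} the Haken spheres correspond bijectively to the edges of the tree $\mathcal{SP}(V)$ (the white vertices of its barycentric subdivision, in the paper's language). But the adjacency criterion you build on it is wrong, and this is a genuine gap rather than a deferred detail. Sharing a common meridian disk is \emph{necessary} for $|P\cap\Sigma\cap Q|=4$ but very far from sufficient: the Haken spheres disjoint from a fixed semi-primitive disk $D$ form an infinite family, one for each isotopy class of arc joining $d^+$ to $d^-$ in the $4$-holed sphere $\Sigma'$, and two such spheres meet $\Sigma$ in four points only when the corresponding arcs are disjoint. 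Under your criterion the subcomplex $\mathcal{H}_D$ would be an infinite simplex, so your scheme could prove neither $\dim\mathcal{H}=1$ nor contractibility. The entire content of the paper's lens-space case is exactly the analysis you leave open: Haken spheres disjoint from $D$ are parameterized by slopes in $\Rational_{\mathrm{odd}}\cup\{1/0\}$ via an explicit covering of $\Sigma'$ by the plane; adjacency in $\mathcal{H}$ is Farey adjacency; hence $\mathcal{H}_D$ is the subcomplex $\mathcal{F}_{\mathrm{odd}}$ of the Farey complex, which is a tree because a Farey triangle has vertices with numerators $a$, $c$, $a+c$, and these cannot all be odd. Then $\mathcal{H}$ is the union of these trees, one per semi-primitive disk, glued along single vertices following the pattern of the tree $\mathcal{SP}(V)$, which gives both contractibility and dimension $1$. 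The tools you propose instead (outermost-subdisk surgery plus Lemmas \ref{lem:non-triviality and non-primitivity} and \ref{lem:semi-primitive disks and words}, as in Theorem \ref{thm:surgery of semi-primitive disks for lens and lens}) are the mechanism for Theorem \ref{thm:surgery and contractibility} at the level of disks; they do not produce this slope-and-parity structure on arcs, and the ``rigidity'' of Lemma \ref{lem:disjoint semi-primitive disks and reducing spheres} (uniqueness of the sphere attached to a disjoint pair of disks) does not by itself exclude three pairwise-adjacent spheres sharing one meridian disk.

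The $S^2\times S^1$ case has a parallel problem. The paper does not use the edge dictionary there at all: since every Haken sphere is disjoint from the unique reducing disk $D$ (Lemma \ref{lem:the uniqueness of the reducing disk}), $\mathcal{H}$ is simplicially isomorphic to the complex $\mathcal{A}(\Sigma_D)$ of arcs joining $d^+$ to $d^-$ in the $2$-holed torus $\Sigma_D$, and the contractibility and $3$-dimensionality of $\mathcal{A}(\Sigma_D)$ are imported from \cite{CMS09, Seo08}. Your ``direct count'' could plausibly recover the dimension (a maximal system of disjoint, non-isotopic arcs of this kind has four members), but a count cannot establish contractibility of a $3$-dimensional complex, and — as you yourself concede in your final paragraph — you have no argument for that step. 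So in both cases the proposal reduces the theorem to precisely the statements that constitute the proof's real content, and supplies for them either an incorrect criterion or no argument.
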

\begin{proof}
Let us assume first that $M_1$ is a lens space.
In Theorem \ref{thm:contractibility for lens and lens}, we have seen that
the semi-primitive disk $\mathcal{SP}(V)$ is a tree.
Let $\mathcal{SP}'(V)$ be the first barycentric
subdivision of the tree $\mathcal{SP}(V)$.
The tree $\mathcal{SP}'(V)$ is bipartite,  of which we call the vertices of countably infinite valence
(the vertices of the original $\mathcal{SP}(V)$) the black
vertices, and the vertices of valence 2 the white ones.
By Lemma \ref{lem:disjoint semi-primitive disks and reducing spheres},
the set of the white vertices one-to-one
corresponds to the set of Haken spheres.

Let $D$ be a semi-primitive disk in $V$.
We note that $D$ represents a black vertex of the tree $\mathcal{SP}'(V)$.
By Lemma \ref{lem:uniqueness of a disjoint semi-primitive disk},
there exists the unique semi-primitive disk $E'$ in $W$ disjoint from $D$.
The set of white vertices in the link of $D$ in $\mathcal{SP}'(V)$
one-to-one correspond of the set of the Haken spheres disjoint from $D \cup E'$.
Let $\Sigma'$ be the 4-holed sphere obtained by cutting $\Sigma$ off along
$\partial D \cup \partial E'$.
Let $d^+$ and $d^-$ (${e'}^+$ and ${e'}^-$, respectively)
be the two boundary circles of $\Sigma'$ coming from
$\partial D$ ($\partial E'$, respectively).
Let $\mathcal{H}_{D}$ be the full subcomplex of the complex $\mathcal{H}$
spanned by the vertices corresponding to Haken spheres disjoint from $D$.
We assign each vertex of $\mathcal{H}_D$ an element of
$\Rational_{ \mathrm{odd}} \cup \{ \infty \}$ in the following way.
Fix a Haken sphere $P$ of $(V, W; \Sigma)$ disjoint from $D \cup E'$.
Set $\mu = P \cap \Sigma'$ and fix a separating simple closed curve $\nu$ in $\Sigma'$ such that
$\nu$ separates $d^+ \cup {e'}^+$ and $d^- \cup {e'}^-$, and that
$| \mu \cap \nu | = 2$ after minimizing the intersection.
Let $\tilde{\Sigma'}$ be the covering space of $\Sigma'$ such that
\begin{itemize}
\item
the components of the preimage of $\mu$ ($\nu$, respectively) are
the vertical (horizontal, respectively) lines in the Euclidean plane;
\item
the set of components of the preimage of $\partial D$ correspond to
the set of points whose coordinates consist of integers.
\end{itemize}
\begin{figure}[htbp]
\begin{center}
\includegraphics[width=11.5cm,clip]{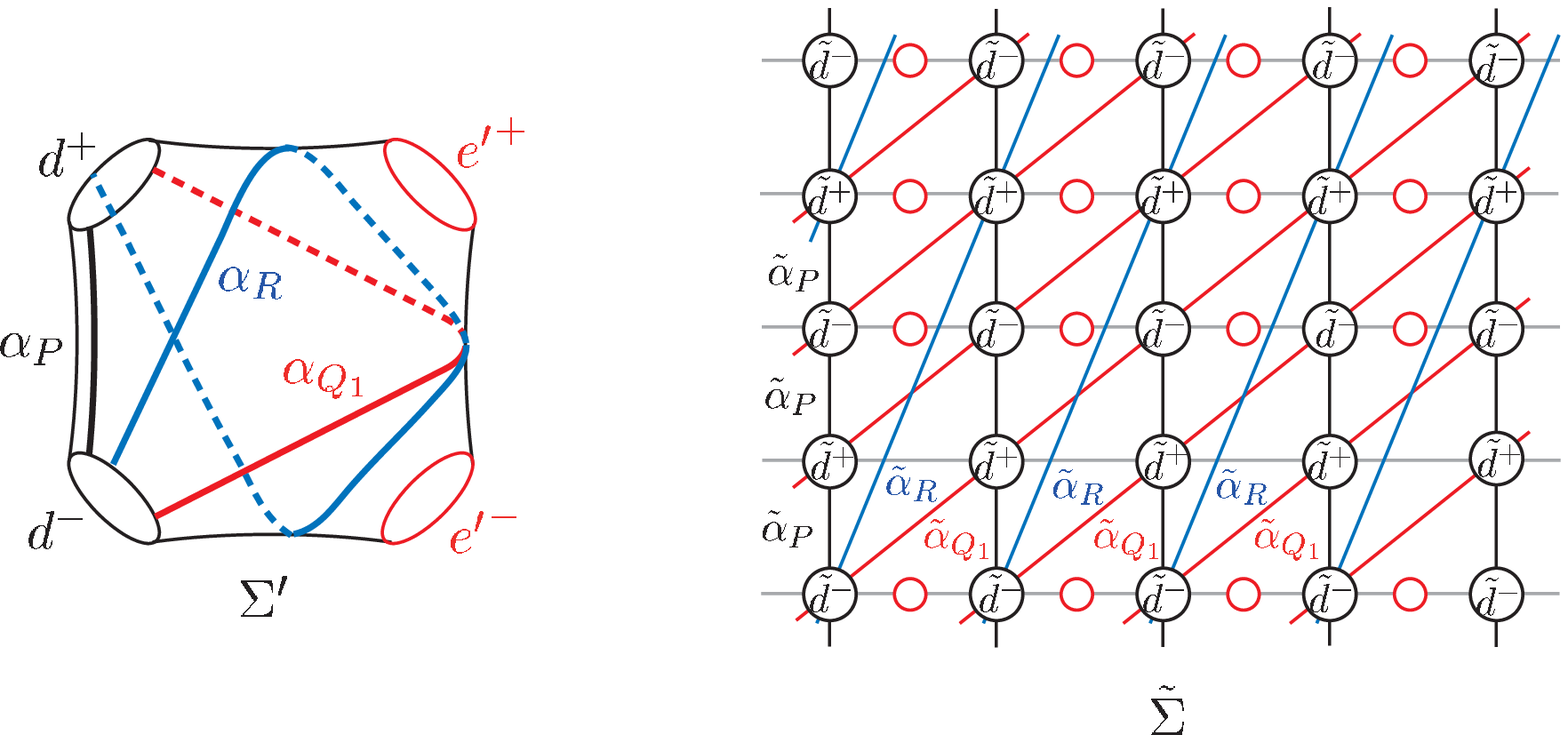}
\caption{}
\label{fig:universal_covering}
\end{center}
\end{figure}
See Figure \ref{fig:universal_covering}.
We note that, once we put a lift of $d^-$ at the origin $(0,0)$, 
the set of the coordinates corresponding to the lifts of $d^+$ 
is $\{ (s,t) \mid s \in \Integer, t \in \Integer_{\mathrm{odd}}  \}$, where 
$\Integer_{\mathrm{odd}}$ is the set of odd integers. 
For each arc connecting $d^+$ and $d^-$, we assign
the slope $s/t \in \Rational_{ \mathrm{odd}} \cup \{\infty\}$
of its preimage with respect to the above covering map, 
where $\Rational_{ \mathrm{odd}}$ is the set of irreducible rational numbers having odd numerators. 
Since the set of Haken spheres disjoint from $D \cup E'$
one-to-one corresponds to 
the set of simple arcs in $\Sigma'$ connecting $d^+$ and $d^-$
as in the proof of Lemma \ref{lem:sufficiency and necessity condition to be semi-primitive},
the above assignment provides an assignment of each vertex of $\mathcal{H}_D$ to an element of
$\Rational_{ \mathrm{odd}} \cup \{ 1/0 \}$.

We now briefly review some well-known facts on the Farey complex.
The {\it Farey complex} $\mathcal{F}$ is
the flag complex whose vertex set
is $\Rational \cup \{ 1/0 \}$.
Two vertices
$s_1 / t_1$ and $s_2 / t_2$ are
connected by an edge if and only if
$ s_1 t_2 - s_2 t_1  = \pm 1$.
See the left-hand side in Figure \ref{fig:farey_graph}.
\begin{figure}[htbp]
\begin{center}
\includegraphics[width=8.5cm,clip]{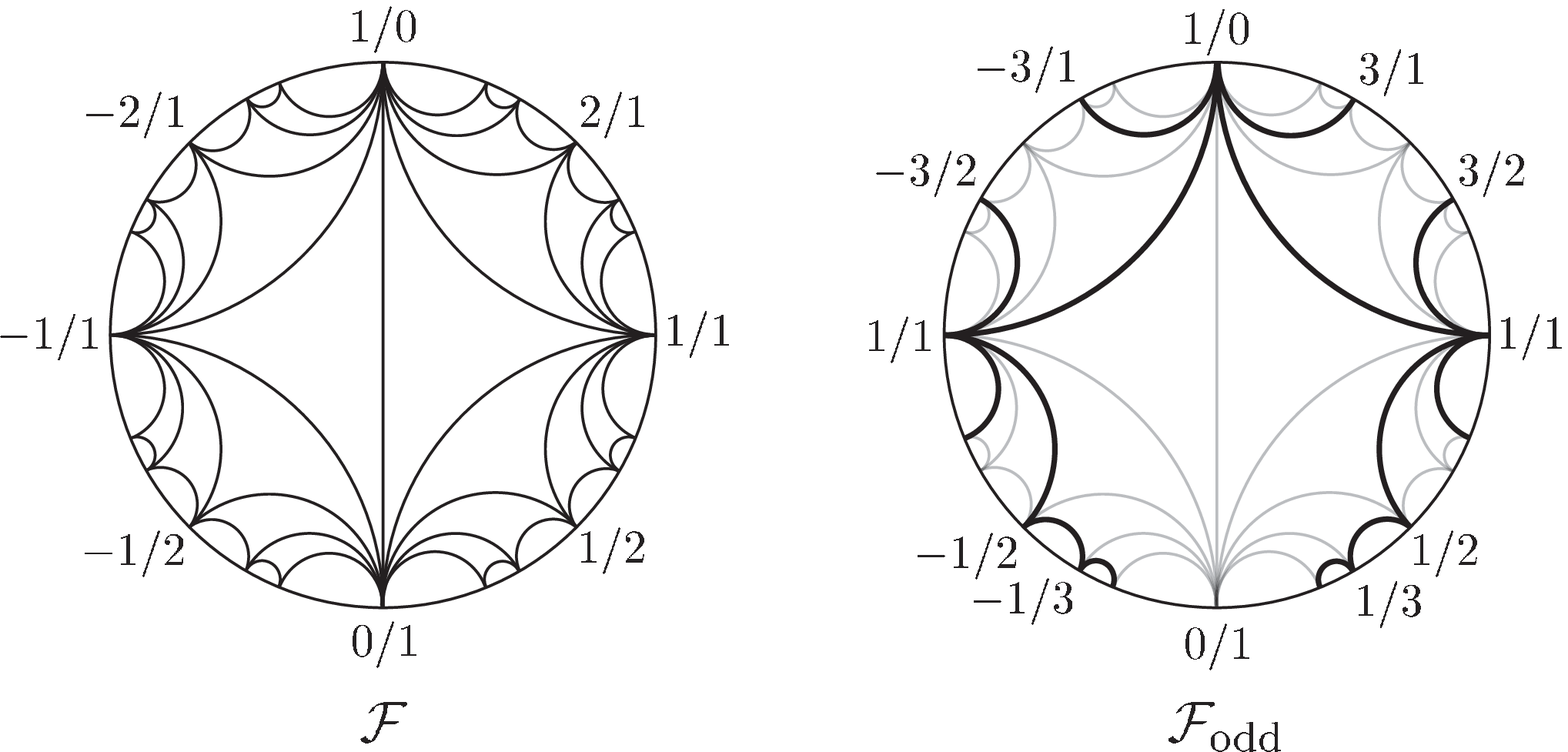}
\caption{}
\label{fig:farey_graph}
\end{center}
\end{figure}
The assignment of each vertex of $\mathcal{H}_{D}(V)$ with an element of
$\Rational \cup \{ 1/0 \}$ described above allows us to get an embedding
of $\mathcal{H}_{D}(V)$ into $\mathcal{F}$.
The image of $\mathcal{H}_{D}(V)$ is the full subcomplex
$\mathcal{F}_{\mathrm{odd}}$ of $\mathcal{F}$ spanned by
$\Rational_{ \mathrm{odd}} \cup \{ 1/0 \}$.
See the right-hand side in Figure \ref{fig:farey_graph}.
It is easy to check that $\mathcal{F}_{\mathrm{odd}}$ is a tree.
It follows that there exists a natural simplicial isomorphism from
$\mathcal{H}$ to the
simplicial complex obtained from
$\mathcal{SP}'(V)$ by replacing the star of each black vertex with
the tree simplicially isomorphic to $\mathcal{F}_{\mathrm{odd}}$.
See Figure \ref{fig:sphere_complex1}.
\begin{figure}[htbp]
\begin{center}
\includegraphics[width=11cm,clip]{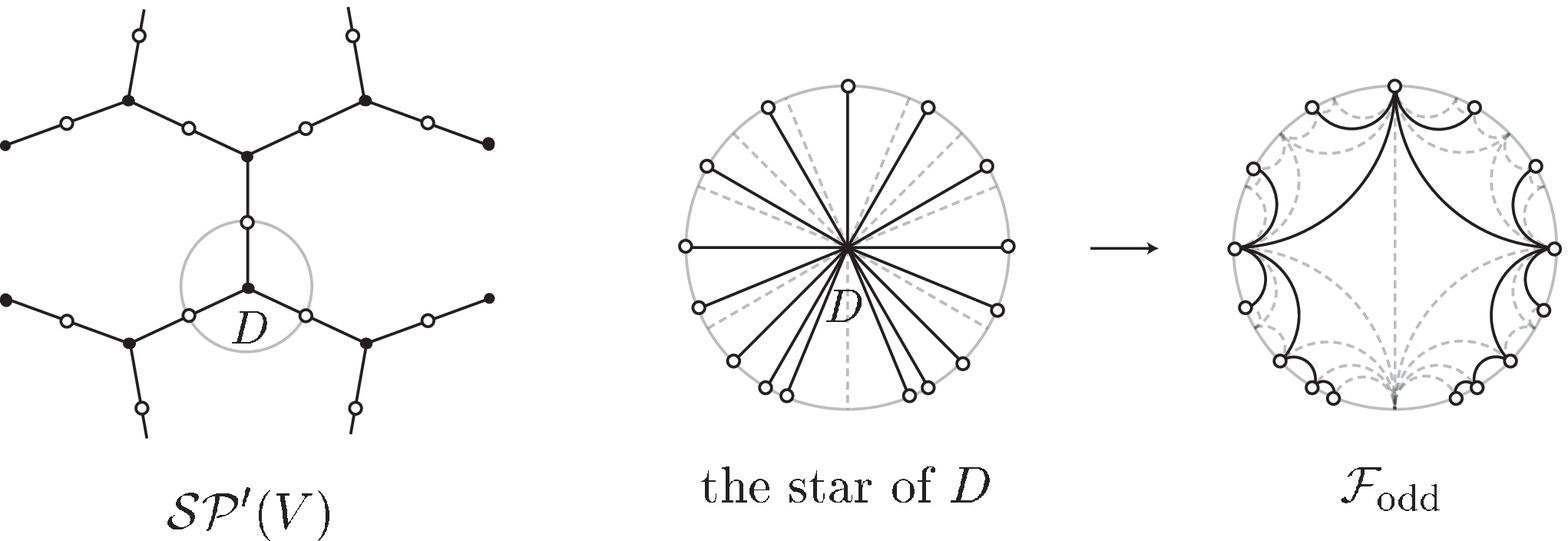}
\caption{}
\label{fig:sphere_complex1}
\end{center}
\end{figure}
Consequently, $\mathcal{H}$ is a tree.

Next, assume that $M_1 = S^2 \times S^1$.
Recall that, by Lemma
\ref{lem:the uniqueness of the reducing disk},
there exists the unique reducing disk $D$ in $V$.
Let $\Sigma_D$ be the $2$-holed torus obtained by cutting $\Sigma$
along $\partial D$.
Let $d^+$ and $d^-$ be the two components of $\partial \Sigma_D$.
Let $\mathcal{A} (\Sigma_D)$ be the simplicial complex
whose vertices are isotopy classes of simple arcs in $\Sigma_D$ connecting
$d^+$ and $d^-$ such that the collection of distinct $k+1$ vertices spans a $k$-simplex
if they admits a set of pairwise disjoint representatives.
Each simple arc $\alpha_P$ in $\Sigma_D$ connecting $d^+$ and $d^-$ determine
a unique Haken sphere $P$ of $(V, W; \Sigma)$.
By the uniqueness of $D$, this correspondence gives a simplicial isomorphism
$\mathcal{A}(\Sigma_D) \to \mathcal{H}$.
It is shown that $\mathcal{A}(\Sigma_D)$ is a contractible $3$-dimensional simplicial complex in
\cite{CMS09, Seo08}, and so is $\mathcal{H}$.
\end{proof}
We remark that the argument developed in \cite{McC91} allows us to show
easily that $\mathcal{H}$ is also a tree for the genus two Heegaard splitting
$(V, W; \Sigma)$ for $(S^2 \times S^1) \# (S^2 \times S^1)$.

\medskip

In the remaining of this section, we analyze the action of the Goeritz group
on the set of Haken spheres of
genus two Heegaard splittings for later works.


\begin{lemma}
\label{lem:two reducing spheres and transitivity}
Let $(V, W; \Sigma)$ be a genus two Heegaard splitting for
the connected sum of two lens spaces $L(p_1, q_1)$ and $L(p_2, q_2)$.
For any two Haken spheres $P$ and $Q$ of $(V, W; \Sigma)$ with
$|P \cap \Sigma \cap Q| = 4$,
there exists an element of the Goeritz group of $(V, W; \Sigma)$
that maps $P$ to $Q$.
\end{lemma}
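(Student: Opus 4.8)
The plan is to promote the tree structure of $\mathcal{H}$ from Theorem~\ref{cor:contractibility of sphere complexes} into an explicit transitivity statement, by realizing the elementary Farey moves of $\mathcal{F}_{\mathrm{odd}}$ through concrete elements of the Goeritz group $\mathcal{G}$.

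First I would reduce to a normal form. In the proof of Theorem~\ref{cor:contractibility of sphere complexes} the complex $\mathcal{H}$ is identified with the complex obtained from $\mathcal{SP}'(V)$ by replacing the star of each black vertex with a copy of $\mathcal{F}_{\mathrm{odd}}$; in particular every edge of $\mathcal{H}$ already lies inside $\mathcal{H}_{D}$ for a single semi-primitive disk $D$. Hence the hypothesis $|P\cap\Sigma\cap Q|=4$ forces $P$ and $Q$ to be disjoint from a common semi-primitive disk $D$ (a common meridian of one of the two solid tori that each of them cuts $V$ into), and, writing $E'$ for the disk of $W$ given by Lemma~\ref{lem:uniqueness of a disjoint semi-primitive disk} and $\Sigma'$ for the four-holed sphere obtained by cutting $\Sigma$ along $\partial D\cup\partial E'$, the arcs $\alpha_P,\alpha_Q$ joining $d^{+}$ and $d^{-}$ that represent $P$ and $Q$ have slopes $\sigma_P,\sigma_Q\in\Rational_{\mathrm{odd}}\cup\{1/0\}$ spanning an edge of $\mathcal{F}_{\mathrm{odd}}$.

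Next I would exhibit enough of $\mathcal{G}$ to move $\sigma_P$ to $\sigma_Q$ inside $\mathcal{F}_{\mathrm{odd}}$. The subgroup of $\mathrm{PGL}_2(\Integer)$ that preserves $\Rational_{\mathrm{odd}}\cup\{1/0\}$ consists exactly of the matrices whose diagonal entries are odd and whose upper right entry is even; a short check shows that this group acts transitively on the edges of $\mathcal{F}_{\mathrm{odd}}$ and, for every edge, contains an element interchanging its two endpoints (for instance $\left(\begin{smallmatrix}1&0\\1&-1\end{smallmatrix}\right)$ interchanges $1/0$ and $1/1$). Since $\sigma_P$ and $\sigma_Q$ lie in opposite parts of the bipartite tree $\mathcal{F}_{\mathrm{odd}}$, such a determinant $-1$ element is genuinely needed. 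It therefore suffices to realize, as elements of $\mathcal{G}$ stabilizing $D$, the two generating parabolics together with one such reflection. I would realize the parabolics by Dehn twists of $M$ along tori meeting $\Sigma$ in a single curve and meeting each of $V$ and $W$ in an annulus (so that each twist extends over both handlebodies and restricts to a Dehn twist of $\Sigma$), and the reflection by the order-two symmetry of $(V,W;\Sigma)$ that preserves $D$ and $E'$ while interchanging the two copies $d^{+},d^{-}$ of $\partial D$. Computing the action of these maps on $\Sigma'$ shows that they induce on $\mathcal{F}_{\mathrm{odd}}$ precisely the required transformations, the even entries and the odd-numerator constraint reflecting that each twisting curve meets the reference arc twice; composing the appropriate ones gives $g\in\mathcal{G}$ with $g(\sigma_P)=\sigma_Q$, hence $g(P)=Q$.

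The main obstacle is the last verification: one must confirm that the candidate homeomorphisms genuinely belong to $\mathcal{G}$ — that is, that the twisting tori and the involution are compatible with the gluing data $(p_1,q_1),(p_2,q_2)$ defining $\Sigma$, preserve $V$ and $W$, and are orientation preserving on $M$ — and that they induce exactly the claimed maps on slopes rather than maps differing by an element fixing both $\sigma_P$ and $\sigma_Q$. Concretely this amounts to producing a self-homeomorphism of the pair $(\Sigma,\partial D\cup\partial E')$ that carries $\alpha_P$ to $\alpha_Q$ and extends over both handlebodies; I expect this to be where the geometry of the lens space summands, already encoded in the words of Section~\ref{sec:Semi-primitive disks} and in the covering-space model of Theorem~\ref{cor:contractibility of sphere complexes}, does the real work.
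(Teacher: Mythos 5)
Your high-level skeleton---put $P$ and $Q$ as disjoint arcs $\alpha_P,\alpha_Q$ in a common four-holed sphere $\Sigma'$, then realize the slope change by an element of $\mathcal{G}$---is the same as the paper's, but the proposal stops exactly where the lemma's actual content begins. Everything hinges on producing a homeomorphism of $\Sigma$ carrying $\alpha_P$ to $\alpha_Q$ that \emph{extends over both handlebodies}; you name this verification as ``the main obstacle'' and defer it, so no element of $\mathcal{G}$ moving $P$ to $Q$ is ever constructed. This step cannot be waved through: the paper shows that the obvious candidate, a single Dehn twist $\tau_\nu$ (which, after composing with powers of the half-twist $\beta$ about $\mu=P\cap\Sigma'$, does carry $\alpha_P$ to $\alpha_Q$), extends over \emph{neither} handlebody --- the word computation gives $w(\tau_\nu(\partial D'))=x^{p_1^2}$ --- and the proof consists of correcting it to $\tau_{\partial E'}\circ\tau_{\partial D}\circ\tau_\nu$ and verifying, by word cancellation, the Loop Theorem and Alexander's trick, that this composition extends over both $V$ and $W$ while still taking $P$ to $Q$. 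Nothing in your proposal substitutes for that argument.

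Moreover, the concrete devices you propose do not exist or are circular. A torus cannot meet $\Sigma$ transversely in a single circle while meeting each of $V$ and $W$ in an annulus: the two boundary circles of the annulus $T\cap V$ would both have to coincide with the single circle $T\cap\Sigma$. (The parabolic you need is simply $\beta$, whose transitivity on neighbors of $P$ is Lemma \ref{lem:transitivity on vertices connected to a vertex}, so this part is repairable.) The edge-flip is the real problem: it is precisely the element $\gamma_1$ of the paper, whose existence is proved in Lemma \ref{lem:stabilizer of a pair of reducing spheres} \emph{using} Lemma \ref{lem:transitivity on reducing spheres}, which in turn rests on the very lemma you are proving; and the only evident order-two symmetry preserving $D\cup E'$ and interchanging $d^+,d^-$, namely the hyperelliptic involution $\alpha$, lies in $\mathcal{G}_{\{D,P\}}$ (Lemma \ref{lem:stabilizer of a reducing sphere}) and hence fixes the vertex $P$, so it acts trivially on slopes rather than as your reflection. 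Two smaller points: your claim that a determinant $-1$ element is ``genuinely needed'' is false, since $\left(\begin{smallmatrix}1&0\\1&1\end{smallmatrix}\right)$ preserves $\Rational_{\mathrm{odd}}\cup\{1/0\}$, has determinant $1$, and exchanges the two sides of the bipartition; and the fact that an edge of $\mathcal{H}$ lies in a single $\mathcal{H}_D$ should not be quoted from Theorem \ref{cor:contractibility of sphere complexes}, because the decomposition of $\mathcal{H}$ into copies of $\mathcal{F}_{\mathrm{odd}}$ asserted in its proof itself presupposes this fact --- the paper proves it by the outermost-subdisk Claim at the start of this lemma's proof, and that is the self-contained way to get your reduction.
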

\begin{proof}
The Haken sphere $P$ cuts $V$ into two solid tori $V_1$ and $V_2$, and
$W$ into $W_1$ and $W_2$.
We may assume that
$V_1 \cup W_1$ and $V_2 \cup W_2$ are punctured lens spaces.
Let $D$ and $E$ be the meridian disks of $V_1$ and $V_2$, respectively,
disjoint from $P$.
Similarly, let $D'$ and $E'$ be the meridian disks of $W_1$ and $W_2$, respectively,
disjoint from $P$.

\smallskip
\noindent {\it Claim.} Up to isotopy, $Q$ is disjoint from  $D \cup E'$ or $E \cup D'$.\\
{\it Proof of Claim.} Let $C_0$ be an outermost sub-disk of the disk $Q \cap V$ cut off by $P \cap Q \cap V$, which is contained in either $V_1$ or $V_2$.
Assume first that $C_0$ is contained in $V_1$.
Then there exists exactly one more such a sub-disk $C_1$ of $Q \cap V$, and it is also contained in $V_1$.
Since $|P \cap \Sigma \cap Q| = 4$, we have $V_1 \cap Q = C_1 \cup C_2$, and hence $Q$ is disjoint from $D$.
Further, if $D_0$ is an outermost sub-disk of the disk $Q \cap W$ cut off by $P \cap Q \cap W$, then $D_0$ must be contained in $W_2$, otherwise $\partial D$ would bound a meridian disk in $W_1$, which forms a non-separating sphere with the disk $D$ in the punctured lens space $V_1 \cup W_1$, a contradiction.
Further, by the same reason to the case of $C_0$ and $C_1$, there exists exactly one more sub-disk $D_1$ of $Q \cap W$, and it is also contained in $W_2$.
Thus $Q$ is also disjoint from $E'$.
If $C_0$ is contained in $V_2$, then, by the same argument, $Q$ is disjoint from $E \cup D'$.

\smallskip
By the claim, we assume that $Q$ is disjoint from $D \cup E'$ without loss of generality.
Let $\Sigma'$ be the 4-holed sphere
obtained by cutting $\Sigma$ along $\partial D \cup \partial E'$.
Let $d^+$ and $d^-$ (${e'}^+$ and ${e'}^-$, respectively)
be the two boundary circles of $\Sigma'$ coming from
$\partial D$ ($\partial E'$, respectively).
Then $P \cap \Sigma'$ ($Q \cap \Sigma'$, respectively)
is the frontier of a regular neighborhood
of the union of $d^+ \cup d^-$ and a simple arc $\alpha_P$ ($\alpha_Q$, respectively)
in $\Sigma'$ connecting $d^+$ and $d^-$.
Since $|P \cap \Sigma \cap Q| = 4$, we may assume that $\alpha_P \cap \alpha_Q = \emptyset$.
Set $\mu = P \cap \Sigma'$. 
Let $\nu$ be a simple closed curve in $\Sigma$ such that
$\nu$ separates $d^+ \cup {e'}^+$ and $d^- \cup {e'}^-$, and $\nu$ intersects $\mu$ transversely in two points.
See Figure \ref{fig:transitivity} (i).
\begin{figure}[htbp]
\includegraphics[width=13cm,clip]{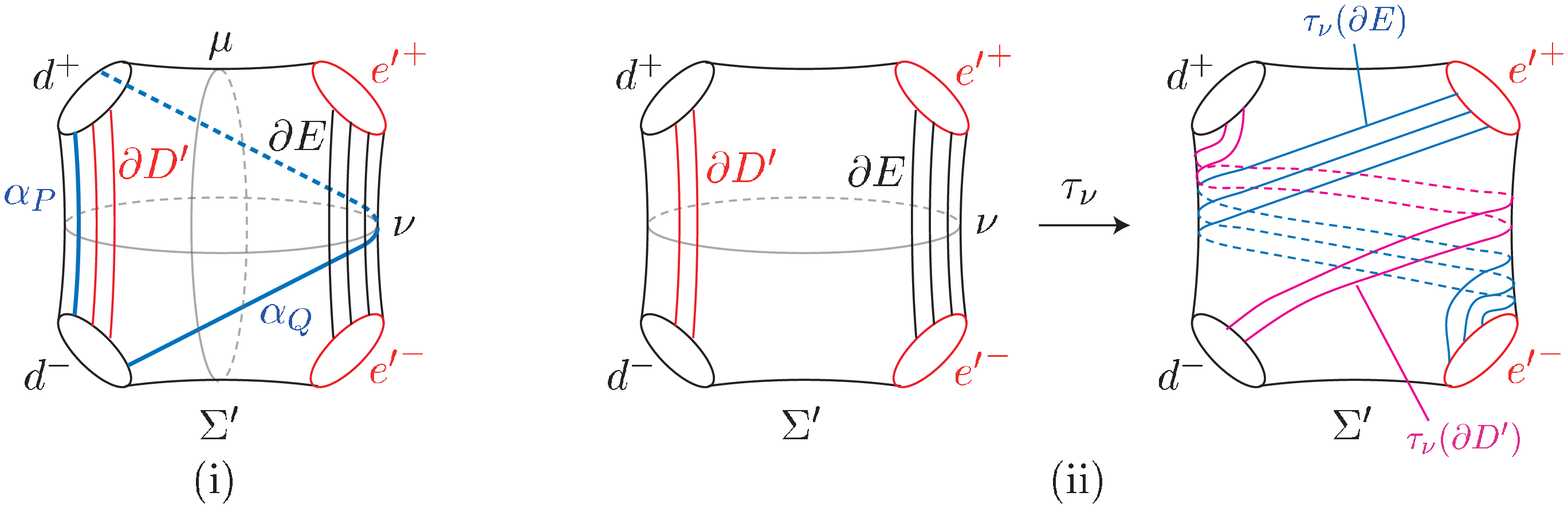}
\caption{}
\label{fig:transitivity}
\end{figure}
We note that a half-Dehn twist about $\mu$ extends to
an orientation-preserving homeomorphism of $L(p_1, q_1) \# L(p_2, q_2)$ that preserves $V$.
Up to a finite number of half-Dehn twists about $\mu$ and isotopy,
a single Dehn twist $\tau_{\nu}$ about $\nu$ maps $\alpha_P$ to $\alpha_Q$.
See Figure \ref{fig:transitivity} (ii).
However, $\tau_{\nu}$ extends to a homeomorphism of neither of
$V$ nor $W$. 
To see this, recall that each simple closed curve $l$ in $\Sigma$ determine 
a (possibly not reduced) word 
$w (l) $ on $\{ x, y \}$ ($\{ z, w \}$, respectively) that can be read off from the intersection of $l$ 
with $\partial D'$ and $\partial E'$ ($\partial D$ and $\partial E$, respectively) 
after fixing orientations of the simple closed curves. 
Note that this word gives the element of $\pi_1(W) = \langle x, y \rangle$ ($\pi_1 (V) = \langle z , w \rangle$, respectively)
represented by the loop $l$. 
On the surface $\Sigma'$, $\partial D'$ ($\partial E$, respectively) 
consists of $p_1$ ($p_2$, respectively) parallel simple arcs 
$\delta_1'$, $\delta_2' , \ldots, \delta_{p_1}'$ ($\epsilon_1$, $\epsilon_2 , \ldots, \epsilon_{p_2}$, respectively). 
Then the subword $w (\tau_{\nu} (\delta_{i}'))$ ($w (\tau_{\nu} (\epsilon_{j}))$, respectively) of 
$w (\tau_{\nu} (\partial D'))$ ($w ( \tau_{\nu} (\partial E))$, respectively) 
determined by the subarc $\tau_{\nu} (\delta_{i}')$ ($\tau_{\nu} (\epsilon_{j})$, respectively) of 
$\tau_{\nu} (\partial D')$ ($\tau_{\nu} (\partial E)$, respectively) 
is $x^{p_1}$ ($z^{p_2}$, respectively) for each $i \in \{ 1,2 , \ldots, p_1 \}$ ($j \in \{ 1,2 , \ldots, p_2 \}$, respectively). 
Here we move $\tau_{\nu} (\partial D')$ ($\tau_{\nu} (\partial E)$, respectively) slightly by isotopy so that 
$\tau_{\nu} (\partial D')$ ($\tau_{\nu} (\partial E)$, respectively) and $\partial D'$ 
($\partial E$, respectively) intersect each other transversely and minimally at points in the interior of $\Sigma'$. 
See the left-hand side in Figure \ref{fig:transitivity2}. 
This implies that $w( \tau_{\nu} (\partial D')) = x^{{p_1}^2}$ ($w( \tau_{\nu} (\partial E)) = x^{{p_2}^2}$, respectively). 
Thus $\tau_{\nu} (\partial D')$ ($\tau_{\nu} (\partial E)$, respectively) cannot bound a disk in $W$ (in $V$, respectively), 
and hence $\tau_{\nu}$ cannot extend to a homeomorphism of $V$ nor $W$.

But now we consider the composition $\tau_{\partial E'} \circ \tau_{\partial D} \circ \tau_{\nu}$. 
We may choose the Dehn twists $\tau_{\partial D}$ and $\tau_{\partial E'}$ so that 
the word $w( \tau_{\partial E'} \circ \tau_{\partial D} \circ \tau_{\nu} (\delta_i'))$ 
($w( \tau_{\partial E'} \circ \tau_{\partial D} \circ \tau_{\nu} (\epsilon_j))$, respectively)  
is an empty word after cancellation. See the right-hand side in Figure \ref{fig:transitivity2}. 
\begin{figure}[htbp]
\includegraphics[width=12cm,clip]{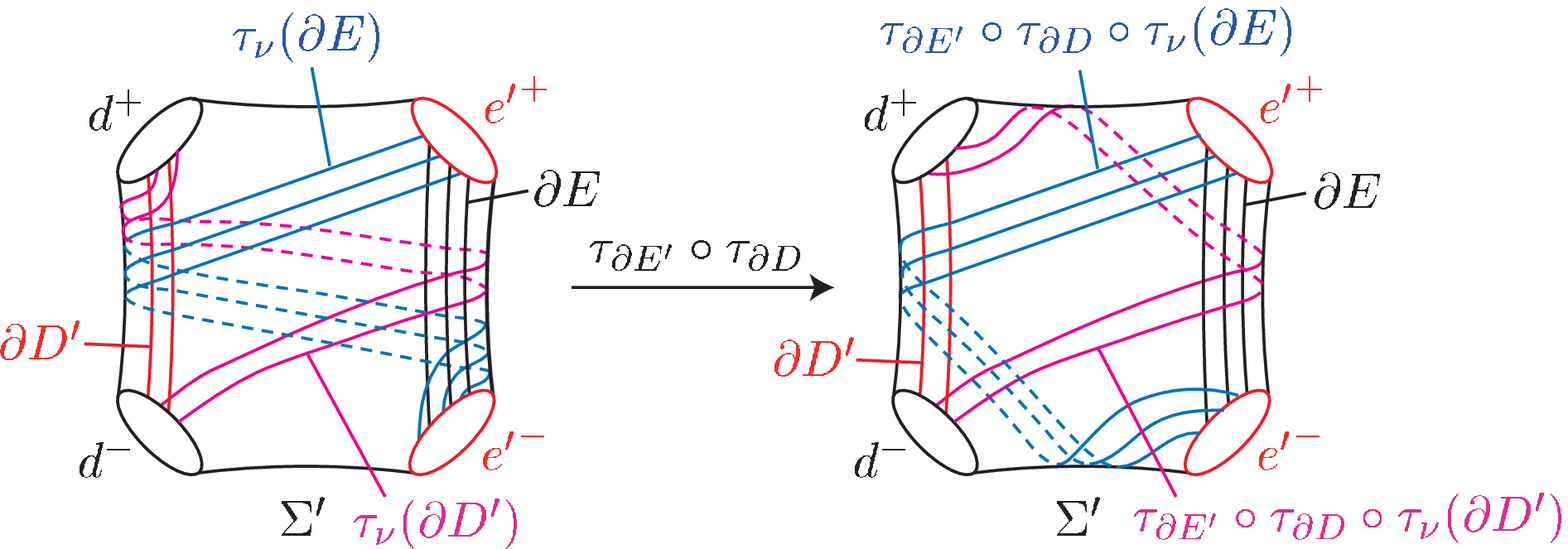}
\caption{}
\label{fig:transitivity2}
\end{figure}
This implies that the word 
$w( \tau_{\partial E'} \circ \tau_{\partial D} \circ \tau_{\nu} (\partial D'))$ 
($w( \tau_{\partial E'} \circ \tau_{\partial D} \circ \tau_{\nu} (\partial E))$, respectively) 
represents the trivial element of $\pi_1 (W)$ ($\pi_1 (V)$, respectively). 
Hence by Loop Theorem, $\tau_{\partial E'} \circ \tau_{\partial D} \circ \tau_{\nu} (\partial D')$
($\tau_{\partial E'} \circ \tau_{\partial D} \circ \tau_{\nu} (\partial E)$, respectively) 
bounds a disk in $W$ ($V$, respectively).  
Apparently, $\tau_{\partial E'} \circ \tau_{\partial D} \circ \tau_{\nu}$ fixes $\partial D$ 
and $\partial E'$. 
Consequently both
$\tau_{\partial E'} \circ \tau_{\partial D} \circ \tau_{\nu} (\partial D)$
and $\tau_{\partial E'} \circ \tau_{\partial D} \circ \tau_{\nu} (\partial E)$
bound disks in $V$.
Therefore by Alexander's trick, this composition extends to a homeomorphism of
$V$.
Similarly, $\tau_{\partial E'} \circ \tau_{\partial D} \circ \tau_{\nu} (\partial D')$ bounds a disk in $W$
and hence this composition  extends to a homeomorphism of $W$.
As a consequence, the map $\tau_{\partial E'} \circ \tau_{\partial D} \circ \tau_{\nu}$ extends to an orientation-preserving homeomorphism of $L(p_1, q_1) \# L(p_2, q_2)$ that preserves $V$.
\end{proof}

\begin{lemma}
\label{lem:transitivity on reducing spheres}
Let $M_1$ be a lens space or $S^2 \times S^1$, and let $M_2$ be a lens space.
Let $(V, W; \Sigma)$ be a genus two Heegaard splitting for $M_1 \# M_2$.
Then the Goeritz group of $(V, W; \Sigma)$ acts transitively
on the set of Haken spheres of $(V, W; \Sigma)$.
\end{lemma}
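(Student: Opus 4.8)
The plan is to combine the connectedness of the complex $\mathcal{H}$ of Haken spheres, established in Theorem \ref{cor:contractibility of sphere complexes}, with an \emph{edge-transitivity} of the action. Write $\mathcal{G}$ for the Goeritz group of $(V, W; \Sigma)$, and for Haken spheres $P, Q$ declare $P \sim Q$ whenever some element of $\mathcal{G}$ carries $P$ to $Q$; this is plainly an equivalence relation on the vertex set of $\mathcal{H}$. The general principle I would invoke is that, if the two endpoints of \emph{every} edge of $\mathcal{H}$ are $\sim$-equivalent, then each connected component of $\mathcal{H}$ is contained in a single $\sim$-class (walk along edges and use transitivity of $\sim$). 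Since $\mathcal{H}$ is connected, there is then only one class, which is exactly the statement that $\mathcal{G}$ acts transitively. By the definition of $\mathcal{H}$, an edge joins two Haken spheres $P$ and $Q$ with $|P \cap \Sigma \cap Q| = 4$; thus the whole lemma reduces to showing that any two such Haken spheres lie in a common $\mathcal{G}$-orbit.

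When $M_1$ is a lens space, $M_1 \# M_2$ is a connected sum of two lens spaces, and this edge-transitivity is precisely Lemma \ref{lem:two reducing spheres and transitivity}; there is nothing further to do. It remains to treat $M_1 = S^2 \times S^1$. Here I would use that, as recorded in the proof of Theorem \ref{cor:contractibility of sphere complexes}(2), the splitting has a unique reducing disk $D$ (Lemma \ref{lem:the uniqueness of the reducing disk}), and the resulting simplicial isomorphism $\mathcal{A}(\Sigma_D) \to \mathcal{H}$ shows that every Haken sphere is disjoint from $D$ and corresponds to an arc of the two-holed torus $\Sigma_D$, obtained by cutting $\Sigma$ along $\partial D$, joining its two boundary circles $d^+$ and $d^-$. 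Consequently both $P$ and $Q$ are disjoint from $D$ and from the disk $D' \subset W$ with $\partial D' = \partial D$, and, since $|P \cap \Sigma \cap Q| = 4$, their corresponding arcs $\alpha_P, \alpha_Q$ may be taken disjoint.

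Regarding $S^2 \times S^1$ as the degenerate lens space $L(0,1)$, I would model the required element on the construction of Lemma \ref{lem:two reducing spheres and transitivity}: the Haken sphere $P$ cuts $V$ into two solid tori, one of which has $D$ as its meridian disk, and a single twist $\tau_\nu$ along a curve $\nu$ dual to $P \cap \Sigma$ carries $\alpha_P$ onto $\alpha_Q$ up to half-twists about $P \cap \Sigma$. One then forms the correcting composition $\tau_{\partial E'} \circ \tau_{\partial D} \circ \tau_{\nu}$, which fixes $\partial D$ and $\partial E'$ while sending the meridians of the lens-space summand to curves bounding disks in $V$ and in $W$; by Alexander's trick it extends to an orientation-preserving homeomorphism of $M$ preserving $V$ and $W$, i.e. an element of $\mathcal{G}$ sending $P$ to $Q$. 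The hard part will be exactly this last step in the degenerate case $p_1 = 0$: the word bookkeeping in Lemma \ref{lem:two reducing spheres and transitivity} was organized around a nonzero $p_1$, so I must check that, when $\partial D = \partial D'$ already bounds disks on both sides, the correcting twists can still be chosen to kill simultaneously the words in $\pi_1(V)$ and $\pi_1(W)$ determined by the lens-space meridians. Once this verification is in place, the connectedness of $\mathcal{H}$ from Theorem \ref{cor:contractibility of sphere complexes} completes the argument.
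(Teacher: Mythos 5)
Your reduction of the lemma to edge-transitivity (the equivalence-relation argument combined with connectedness from Theorem \ref{cor:contractibility of sphere complexes}), and your treatment of the case where $M_1$ is a lens space by quoting Lemma \ref{lem:two reducing spheres and transitivity}, coincide with the paper's proof of that case. The genuine gap is the case $M_1 = S^2 \times S^1$: there your proposal does not prove edge-transitivity, it only sketches a plan and then explicitly postpones what you yourself call ``the hard part,'' namely the verification that when $p_1 = 0$ the correcting composition $\tau_{\partial E'} \circ \tau_{\partial D} \circ \tau_{\nu}$ can still be chosen so that the images of the meridians bound disks on both sides. Since that verification \emph{is} the content of edge-transitivity in this case, the proposal stops short of a proof. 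There is also a smaller unaddressed point: your disjointness of the arcs $\alpha_P, \alpha_Q$ is obtained in the $2$-holed torus $\Sigma_D$, whereas the twisting construction of Lemma \ref{lem:two reducing spheres and transitivity} takes place in the $4$-holed sphere obtained by cutting $\Sigma$ along $\partial D \cup \partial E'$; to transplant it you would first need to show $Q$ can be isotoped off $E'$ as well, i.e.\ an analogue of the Claim in that lemma.

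The paper's own proof of this case avoids all of this. Since $D$ is the unique reducing disk (Lemma \ref{lem:the uniqueness of the reducing disk}) and $\partial D = \partial D'$ bounds disks in \emph{both} handlebodies, any two Haken spheres $P$ and $Q$ determine arcs $\alpha_P$ and $\alpha_Q$ in $\Sigma_D$ joining $d^+$ to $d^-$, and the homeomorphism of $\Sigma_D$ obtained by pushing $d^+$ along the guide arc carries $\alpha_P$ to $\alpha_Q$. This pushing map extends to a slide of a foot of a handle of each of $V$ and $W$ (precisely because $\partial D$ bounds meridian disks on both sides), hence to an element of the Goeritz group taking $P$ to $Q$. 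No word bookkeeping is needed, no adjacency hypothesis $|P \cap \Sigma \cap Q| = 4$, and no appeal to connectedness of $\mathcal{H}$: transitivity is proved directly on the full set of Haken spheres. Your degenerate-twist plan could likely be completed---when $\partial D = \partial D'$ the twist $\tau_{\nu}$ already extends to $W$ since it fixes both $\partial D'$ and $\partial E'$, and a single correcting twist along $\partial E'$ kills the nontrivial power that $\tau_{\nu}(\partial E)$ spells in $\pi_1(V)$---but as written the decisive step is missing, and the paper shows it is unnecessary.
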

\begin{proof}
The case where $M_1$ is a lens space follows from
Theorem \ref{cor:contractibility of sphere complexes} and
Lemma \ref{lem:two reducing spheres and transitivity}.
Assume $M_1 = S^2 \times S^1$.
Let $P$ be a Haken sphere of $(V, W; \Sigma)$.
Then $P$ cuts $V$ into two solid tori $V_1$ and $V_2$, and
$W$ into $W_1$ and $W_2$.
We may assume that
$V_1 \cup W_1$ is a punctured $S^2 \times S^1$.
Let $D$ and $E$ be the meridian disks of $V_1$ and $V_2$, respectively,
disjoint from $P$.
Similarly, let $D'$ and $E'$ be the meridian disks of $W_1$ and $W_2$, respectively,
disjoint from $P$. 	
In this case, we may assume that $\partial D = \partial D'$.
As we have seen in Subsection
\ref{subsec:Semi-primitive disks and a genus two Heegaard splitting for
the connected sum of a lens space and S2 times S1},
$D$ is the unique reducing disk in $V$.
Let $\Sigma_D$ be a 2-holed torus
obtained by cutting $\Sigma$ along $\partial D$.
We denote the boundary circles of $\Sigma_D$ by $d^+$ and $d^-$.
Then there exists a simple arc $\alpha_P$ in $\Sigma_D$
connecting $d~+$ and $d^-$ such that
$P \cap \Sigma_D$ is the frontier of a regular neighborhood of
$d^+ \cup \alpha_P \cup d^-$.
Let $Q$ be another Haken sphere of $(V, W; \Sigma)$.
By the same argument as above,
there exists a simple arc $\alpha_Q$ in $\Sigma_D$
connecting $d~+$ and $d^-$ such that
$Q \cap \Sigma_D$ is the frontier of a regular neighborhood of
$d^+ \cup \alpha_Q \cup d^-$.
Then there exists a hoeomorphism $\varphi$ of $\Sigma_D$
defined by pushing $d^+$ in such a way that $\varphi$ maps $\alpha_P$ to $\alpha_Q$.
See Figure \ref{fig:twice-punctured_torus}.
\begin{figure}[htbp]
\includegraphics[width=7cm,clip]{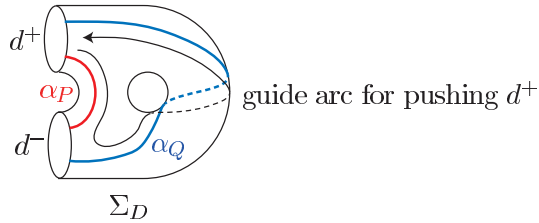}
\caption{Pushing $d^+$ along the guide arc maps $\alpha_P$ to $\alpha_Q$.}
\label{fig:twice-punctured_torus}
\end{figure}
The homeomorphism $\varphi$ extends to a slide of a foot of a handle of each of $V$ and $W$, and so
$\varphi$ extends to a homeomorphism of $M_1 \# M_2$ 
that preserves $V$ and $W$, and takes $P$ to $Q$ up to isotopy. 
\end{proof}

\section{Classification of genus two Heegaard splittings}
\label{sec:Classification of genus two Heegaard splittings}

For  $i \in \{1, 2\}$, let $M_i$ be a lens space $L(p_i,q_i)$ or $S^2 \times S^1$, and let $(V_i, W_i; \Sigma_i)$ be a
genus one Heegaard splitting for $M_i$.
By \cite{BO83},
$\Sigma_i$ is the unique genus one Heegaard surface for $M_i$ up to isotopy, and
there exists an orientation-preserving homeomorphism of $M_i$ that interchanges
$V_i$ and $W_i$ if and only if ${q_i}^2 \equiv 1 \pmod {p_i}$ or $M_i = S^2 \times S^1$.
Let $B_i$ be a 3-ball embedded in $M_i$
so that $B_i \cap \Sigma_i$ is a single disk properly embedded in $B_i$.
A genus two Heegaard splitting $(V, W, \Sigma)$ for $M_1 \# M_2$ is created
by gluing $V_1$ and $V_2$ to obtain $V$, and $W_1$ and $W_2$ to obtain $W$,
by an appropriate orientation-reversing map $\partial B_1 \to \partial B_2$
after removing the interiors of $B_1$ and $B_2$ from $M_1$ and $M_2$, respectively.
Also, another genus two Heegaard splitting $(V', W'; \Sigma')$ for $M_1 \# M_2$ is created
by gluing $V_1$ and $W_2$ to obtain $V'$, and $W_1$ and $V_2$ to obtain $W'$
in the same way.
From \cite{Hak68}, it is known that each genus two Heegaard surface for $M_1 \# M_2$ is
one of the above two Heegaard surfaces $\Sigma$ and $\Sigma'$
modulo the homeomorphisms of $M_1 \# M_2$.
However, it is shown in \cite{Bir75} that $\Sigma$ and $\Sigma'$ do not always
coincide modulo homeomorphisms of $M$.
In \cite{MS88}, genus two Heegaard surfaces for $L(p_1,q_1) \# L(p_2, q_2)$ modulo the
homeomorphisms of $L(p_1,q_1) \# L(p_2, q_2)$ are classified when $p_1 = p_2$ as follows.
\begin{theorem}[\cite{MS88}]
\label{thm:Montesinos and Safont}
Let $M$ be the connected sum of two lens spaces $L(p,q_1)$ and $L(p, q_2)$.
Then there exists a unique genus two Heegaard surface for $M$ modulo homeomorphisms of $M$
if and only if ${q_1}^2 \equiv 1$ or ${q_2}^2 \equiv 1 \pmod p$, and two Heegard surfaces otherwise.
\end{theorem}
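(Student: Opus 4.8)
The plan is to reduce Montesinos--Safont's theorem to the transitivity result I have already established for the Goeritz group action on Haken spheres. Recall from the discussion preceding the statement that every genus two Heegaard surface for $M = L(p,q_1) \# L(p,q_2)$ is, modulo homeomorphisms of $M$, one of the two surfaces $\Sigma$ and $\Sigma'$ obtained by the two possible gluings: $\Sigma$ glues $V_1$ to $V_2$ (and $W_1$ to $W_2$), while $\Sigma'$ glues $V_1$ to $W_2$ (and $W_1$ to $V_2$). So the entire content of the theorem is the question of when $\Sigma$ and $\Sigma'$ are interchanged by a homeomorphism of $M$, and my first step is to make this dichotomy precise in terms of a fixed Haken sphere.

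First I would fix a standard Haken sphere $P$ for $\Sigma$, cutting $V$ into solid tori $V_1, V_2$ and $W$ into $W_1, W_2$ with $V_i \cup W_i$ a punctured $L(p,q_i)$. A homeomorphism of $M$ carrying $\Sigma$ to $\Sigma'$ exists precisely when one can reglue so that the roles of a $V$-summand and a $W$-summand are exchanged; concretely, by \cite{BO83} the genus one splitting of $L(p,q_i)$ admits an orientation-preserving homeomorphism interchanging its two solid tori if and only if ${q_i}^2 \equiv 1 \pmod p$. The key observation is that such an ``interchanging'' homeomorphism of one summand, performed inside the 3-ball $B_i$ and extended by the identity, produces a homeomorphism of $M$ carrying $\Sigma$ to $\Sigma'$. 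Thus if either ${q_1}^2 \equiv 1$ or ${q_2}^2 \equiv 1 \pmod p$, the two surfaces coincide modulo homeomorphism, giving a unique genus two Heegaard surface. This handles the ``if'' direction.

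For the converse I would argue contrapositively: suppose ${q_1}^2 \not\equiv 1$ and ${q_2}^2 \not\equiv 1 \pmod p$, and suppose for contradiction that some homeomorphism $f$ of $M$ carries $\Sigma$ to $\Sigma'$. After adjusting $f$ by an ambient isotopy, $f$ carries the Haken sphere $P$ of $\Sigma$ to a Haken sphere $P'$ of $\Sigma'$, and reading off the summands shows that $f$ must realize an interchange of a lens space summand with the ``other side'' of itself, i.e. it forces an orientation-preserving self-homeomorphism of some $L(p,q_i)$ interchanging its two Heegaard solid tori, which by \cite{BO83} would require ${q_i}^2 \equiv 1 \pmod p$, a contradiction. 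Here is where Lemma~\ref{lem:transitivity on reducing spheres} enters: since the Goeritz group of $(V,W;\Sigma)$ acts transitively on Haken spheres, the precise combinatorial data associated to a Haken sphere (the unordered pair $\{q_1, q_2\}$ of summand parameters, recorded via the meridians $D', E'$ of $W_1, W_2$) is a genuine invariant of the Heegaard surface independent of the chosen Haken sphere, so no choice of $P$ can evade the obstruction.

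The main obstacle I expect is the bookkeeping in the converse direction: one must show that the homeomorphism $f$, after normalizing it to respect a Haken sphere, genuinely induces the forbidden interchange on a genus one summand rather than merely permuting summands in an orientation-respecting way that is already available. The transitivity lemma is what lets me normalize $f$ so that it preserves a fixed Haken sphere, reducing the global statement about $M$ to the local statement about the individual summands $L(p,q_i)$, where \cite{BO83} applies directly. The case analysis distinguishing whether $f$ preserves or swaps the two summands, and tracking the effect on the pair $\{q_1,q_2\}$ modulo the symmetries $q \mapsto q^{-1}$ inherent in the genus one splitting, is the delicate point; once it is organized correctly, the equivalence ${q_i}^2 \equiv 1 \pmod p \Leftrightarrow$ existence of the interchanging homeomorphism closes the argument.
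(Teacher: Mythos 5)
Your overall strategy is the same as the one the paper actually uses: the statement itself is cited to \cite{MS88}, but the paper's own proof of its generalization, Theorem \ref{thm:Generalization of Montesinos-Safont theorem}, covers this case, and it proceeds exactly as you propose --- reduce via Haken's theorem to the two model surfaces $\Sigma$ and $\Sigma'$, get the ``if'' direction from the Bonahon--Otal criterion \cite{BO83}, and for the ``only if'' direction use Lemma \ref{lem:transitivity on reducing spheres} to normalize a hypothetical homeomorphism $f$ with $f(\Sigma')=\Sigma$ so that it carries a chosen Haken sphere of $\Sigma'$ to a chosen Haken sphere of $\Sigma$, hence induces homeomorphisms of the genus one summands.

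There is, however, a genuine gap in your ``only if'' argument. You assert that the normalized homeomorphism ``forces an \emph{orientation-preserving} self-homeomorphism of some $L(p,q_i)$ interchanging its two Heegaard solid tori,'' but nothing in your sketch justifies the word ``orientation-preserving,'' and the theorem quantifies over \emph{all} homeomorphisms of $M$, including orientation-reversing ones. This is not bookkeeping: an orientation-reversing homeomorphism of $L(p,q)$ that interchanges the two solid tori of its genus one splitting exists precisely when $q^2 \equiv -1 \pmod p$ (for instance $L(5,2)$), a condition perfectly compatible with $q^2 \not\equiv 1 \pmod p$, so quoting \cite{BO83} for orientation-preserving interchanges does not by itself produce a contradiction. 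Closing this gap is exactly where the paper's proof does its real work, and it is where the hypotheses on the two summands enter asymmetrically: after orienting the Haken spheres $P$ and $P'$ and arranging that $g\circ f$ preserves $V_1$ and $W_1$, the condition ${q_1}^2 \not\equiv 1 \pmod{p_1}$ is used to conclude that $g\circ f$ is orientation-\emph{preserving} (an orientation-reversing homeomorphism of a lens space preserving each solid torus of its genus one splitting can only exist for $p\le 2$), and only then does \cite{BO83}, applied to the second summand whose solid tori are interchanged, contradict ${q_2}^2 \not\equiv 1 \pmod{p_2}$. In other words, one congruence hypothesis rules out orientation reversal and the other yields the contradiction; your plan, which folds everything into a case analysis of summand swaps and the symmetry $q \mapsto q^{-1}$, is missing this necessary step.
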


The following is a generalization of Theorem \ref{thm:Montesinos and Safont} to the case of all non-prime $3$-manifolds which admit genus two Heegaard splittings.

\begin{theorem}
\label{thm:Generalization of Montesinos-Safont theorem}
Let $M$ be a connected sum of $M_1$ and $M_2$, where $M_i$ is a lens space $L(p_i,q_i)$ or $S^2 \times S^1$ for $i \in \{1, 2\}$. 
Then there exists a unique genus two Heegaard surface for $M$ modulo homeomorphisms of $M$
if and only if one of $M_i$ is $S^2 \times S^1$ or a lens space $L(p_i, q_i)$ with ${q_i}^2 \equiv 1 \pmod {p_i}$,
and two Heegard surfaces otherwise.
\end{theorem}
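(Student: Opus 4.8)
The plan is to reduce the classification to the two-splitting construction already described before the statement. Recall that by \cite{Hak68} every genus two Heegaard surface for $M = M_1 \# M_2$ is, modulo homeomorphisms of $M$, one of the two surfaces $\Sigma$ and $\Sigma'$ obtained by the doubling-and-gluing construction: $\Sigma$ glues $V_1$ to $V_2$ and $W_1$ to $W_2$, while $\Sigma'$ glues $V_1$ to $W_2$ and $W_1$ to $V_2$. Thus the whole theorem amounts to deciding exactly when $\Sigma$ and $\Sigma'$ coincide modulo homeomorphisms of $M$. The natural strategy is to show that $\Sigma$ and $\Sigma'$ are homeomorphic precisely when there is an orientation-preserving self-homeomorphism of one of the summands $M_i$ interchanging its two genus-one handlebodies $V_i$ and $W_i$, and then invoke the recalled fact from \cite{BO83} that such a handlebody-swapping homeomorphism exists if and only if ${q_i}^2 \equiv 1 \pmod{p_i}$ or $M_i = S^2 \times S^1$.

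First I would prove the easy direction: if some $M_i$ admits an orientation-preserving homeomorphism $h_i$ swapping $V_i$ and $W_i$, then $\Sigma$ and $\Sigma'$ are equivalent. One may take $h_i$ to fix the separating ball $B_i$ used in the connected-sum construction (or isotope it to do so), so that $h_i$ induces a homeomorphism of $M$ carrying the gluing pattern of $\Sigma$ to that of $\Sigma'$; explicitly, swapping $V_i \leftrightarrow W_i$ in the $i$th summand turns the handlebody pairing $(V_1 \cup V_2,\, W_1 \cup W_2)$ into $(V_1 \cup W_2,\, W_1 \cup V_2)$ (for $i=2$), which is exactly the splitting $(V', W'; \Sigma')$. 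This gives a homeomorphism of $M$ taking $\Sigma$ to $\Sigma'$, so there is a unique Heegaard surface up to homeomorphism in this case.

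The substantive direction, and the main obstacle, is the converse: if neither $M_i$ is $S^2 \times S^1$ and both have ${q_i}^2 \not\equiv 1 \pmod{p_i}$, I must show $\Sigma$ and $\Sigma'$ are genuinely distinct modulo homeomorphisms of $M$. For this I would use the structural results on Haken spheres established earlier, together with the transitivity of the Goeritz-group action on Haken spheres (Lemma \ref{lem:transitivity on reducing spheres}). The key observation is that a homeomorphism of $M$ carrying $\Sigma$ to $\Sigma'$ would carry the unique (up to the Goeritz action) reducing sphere structure of one splitting to that of the other, and hence induce a homeomorphism between the corresponding pairs of punctured summands that respects the gluing. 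Concretely, any Haken sphere decomposes the splitting into two punctured lens spaces with prescribed meridian data $(p_i, q_i)$; an equivalence $\Sigma \cong \Sigma'$ would have to match these decompositions, and by transitivity we may assume it fixes a chosen Haken sphere. Restricting the equivalence to a summand then yields an orientation-preserving self-map of $M_i$ that interchanges $V_i$ and $W_i$, forcing ${q_i}^2 \equiv 1 \pmod{p_i}$ by \cite{BO83} and contradicting our hypothesis.

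The delicate bookkeeping will be in tracking orientations and in verifying that an equivalence $\Sigma \to \Sigma'$, after normalizing by the Goeritz group to fix a common Haken sphere, really does restrict to a summand-swapping homeomorphism rather than merely permuting the two summands when $p_1 = p_2$; the case $p_1 = p_2$ is exactly the setting of Theorem \ref{thm:Montesinos and Safont}, so consistency with Montesinos--Safont provides both a check and a template. I would handle the $S^2 \times S^1$ case separately but in parallel: there the relevant summand always admits an orientation-preserving handlebody swap, which is why the presence of an $S^2 \times S^1$ summand always yields uniqueness, matching the statement. Assembling the two directions then gives the stated if-and-only-if, completing the generalization of Theorem \ref{thm:Montesinos and Safont} to all non-prime $3$-manifolds admitting genus two splittings.
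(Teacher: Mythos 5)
Your proposal follows essentially the same route as the paper's proof: the ``if'' direction via a handlebody-swapping homeomorphism of a summand (guaranteed by \cite{BO83}) applied to the connected-sum construction, and the ``only if'' direction by normalizing a hypothetical equivalence $\Sigma \to \Sigma'$ with Lemma \ref{lem:transitivity on reducing spheres} so that it fixes a Haken sphere, then restricting to the summands to produce an orientation-preserving homeomorphism interchanging $V_i$ and $W_i$, contradicting ${q_i}^2 \not\equiv 1 \pmod{p_i}$. The orientation and summand-labelling bookkeeping you flag as delicate is exactly what the paper's proof handles by orienting $P$ and $P'$ so the $L(p_1,q_1)$-summand lies on the negative side.
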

\begin{proof}
The ``if" part follows trivially from the descriptive comments at the beginning of this section. 

Now we prove  the ``only if" part. 
Suppose that both of $M_i$ are lens spaces $L(p_i, q_i)$ with ${q_i}^2 \not\equiv 1 \pmod {p_i}$.
Let $(V, W; \Sigma)$ and $(V', W'; \Sigma')$ be the two Heegaard splittings
of $M = L(p_1, q_1) \# L(p_2, q_2)$ obtained from the genus one Heegaard splittings
$(V_i, W_i; \Sigma_i)$ of $L(p_i, q_i)$, $i=1,2$,
as described in the beginning of this section.
The construction provides the Haken spheres $P$ and $P'$ for
the splittings $(V, W; \Sigma)$ and $(V', W'; \Sigma')$, respectively.
We give an orientation of $P$ and $P'$ so that
the $L(p_1, q_1)$-summand lies in the negative side.
Suppose that there exists a homeomorphism $f$ of $M$ that maps $\Sigma'$ to $\Sigma$.
Then, by Lemma \ref{lem:transitivity on reducing spheres}, there exists
a homeomorphism $g$ of $M$ that preserves $\Sigma$ and that maps
$f(P')$ to $P$.
We may assume that $P$ and $g \circ f(P')$ have the same orientation.
Moreover, we may assume that $g \circ f$ induces a homeomorphism of $L(p_1, q_1)$ that preserves
$V_1$ and $W_1$.
Since ${q_1}^2 \not\equiv 1 \pmod {p_1}$, $g \circ f$ is orientation-preserving.
Now $g \circ f$ induces an orientation-preserving
homeomorphism of $L(p_2, q_2)$ that interchanges
$V_2$ and $W_2$, contradicting ${q_2}^2 \equiv 1 \pmod {p_2}$.
\end{proof}

\section{Genus two Goeritz groups}
\label{sec:Genus two Goeritz groups}

Let $(V, W; \Sigma)$ be a genus two Heegaard splitting for the
connected sum of two lens spaces.
A Haken sphere $P$ of $(V, W; \Sigma)$ is said to be {\it reversible}
if there exists an element $g$ of $\mathcal{G}$ fixing $P$ setwise
such that $g$ restricted to $P$ is an orientation-reversing
homeomorphism on $P$.
We say that the splitting $(V, W; \Sigma)$
is {\it symmetric} if it admits a reversible
Haken sphere.
By Lemma \ref{lem:transitivity on reducing spheres},
if the splitting $(V, W; \Sigma)$ admits a reversible Haken sphere,
then every Haken sphere of $(V, W; \Sigma)$ is reversible.

For a genus two Heegaard splitting $(V, W; \Sigma)$ for
the connected sum of two lens spaces, we fix the following notations throughout the section.
\begin{itemize}
\item
Disjoint, non-parallel semi-primitive disks $D$ and $E$ in $V$,
\item
the disjoint semi-primitive disks $D'$ and $E'$ in $W$ such that $D \cap E' = E \cap D' = \emptyset$
(such $D'$ and $E'$ are determined uniquely by Lemma \ref{lem:uniqueness of a disjoint semi-primitive disk}),
\item
the Haken sphere $P$ of $(V, W; \Sigma)$ disjoint from $D \cup E$
(the existence and uniqueness of
$P$ follows from Lemma \ref{lem:disjoint semi-primitive disks and reducing spheres}), and
\item
a Haken sphere $Q_1$ ($Q_2$, respectively)  of $(V, W; \Sigma)$ disjoint from
$D \cup E'$ ($E \cup D'$, respectively) such that
$|P \cap \Sigma \cap Q_1| = 4$ ($|P \cap \Sigma \cap Q_2| = 4$, respectively)
(the existence of $Q_1$ and $Q_2$ follows from the
proof of Lemma \ref{lem:sufficiency and necessity condition to be semi-primitive}).
\end{itemize}
\begin{figure}[htbp]
\begin{center}
\includegraphics[width=14cm,clip]{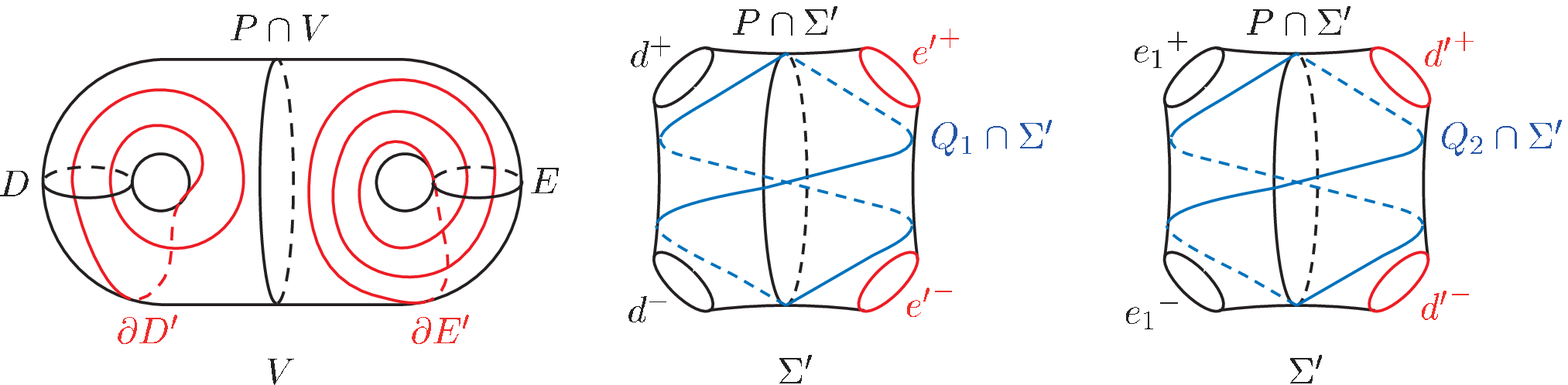}
\caption{}
\label{fig:fixed_disks_for_two_lens_spaces}
\end{center}
\end{figure}
See Figure \ref{fig:fixed_disks_for_two_lens_spaces}.
In the figure, the 4-holed sphere $\Sigma'$ is
obtained by cutting $\Sigma$ along $\partial D \cup \partial E'$ and
the boundary circles $d^+$ and $d^-$ (${e'}^+$ and ${e'}^-$, respectively)
come from $\partial D$ ($\partial E'$, respectively).
By $\alpha \in \mathcal{G}$, we denote the hyperelliptic involution
of both $V$ and $W$.
By $\beta \in \mathcal{G}$, we denote the extension of a half-Dehn twist about the disk $P \cap V$.
By $\gamma_1 \in \mathcal{G}$ ($\gamma_2 \in \mathcal{G}$, respectively), we denote an element of order 2
that preserves $D \cup E'$ ($E \cup D'$, respectively) and that interchanges $P$ and $Q_1$ ($P$ and $Q_2$, respectively)
(the existence of this element
will be proved in Lemma \ref{lem:stabilizer of a pair of reducing spheres}).
When $P$ is reversible, we denote by $\delta \in \mathcal{G}$ an element of order 2
that reverses $P$.

Also, for a genus two Heegaard splitting $(V, W; \Sigma)$ for
the connected sum of $S^2 \times S^1$ and a lens space,
we fix the following notations throughout the section.
\begin{itemize}
\item
The reducing disk $D$ in $V$ and the disk $D'$ in $W$ bounded by $\partial D$
($D$ is unique by Lemma \ref{lem:the uniqueness of the reducing disk}),
\item
disjoint, non-isotopic, semi-primitive disks $E_1$ and $E_2$ in $V$,
\item
a semi-primtive disk $E'$ in $W$ such that $\partial E'$ has the same type
with respect to $E_1$ and $E_2$ (the existence of $E'$ follows from the proof
of Lemma \ref{lem:semi-primitive disks are interchangible}), and
\item
Haken spheres $P$ and $Q$ of $(V, W; \Sigma)$ disjoint from $D \cup E_1$ such that
$|P \cap \Sigma \cap Q| = 4$
(the existence of $P$ and $Q$ follows
from the proof of
Lemma \ref{lem:the uniqueness of the reducing disk} (1)).
\end{itemize}
\begin{figure}[htbp]
\begin{center}
\includegraphics[width=9cm,clip]{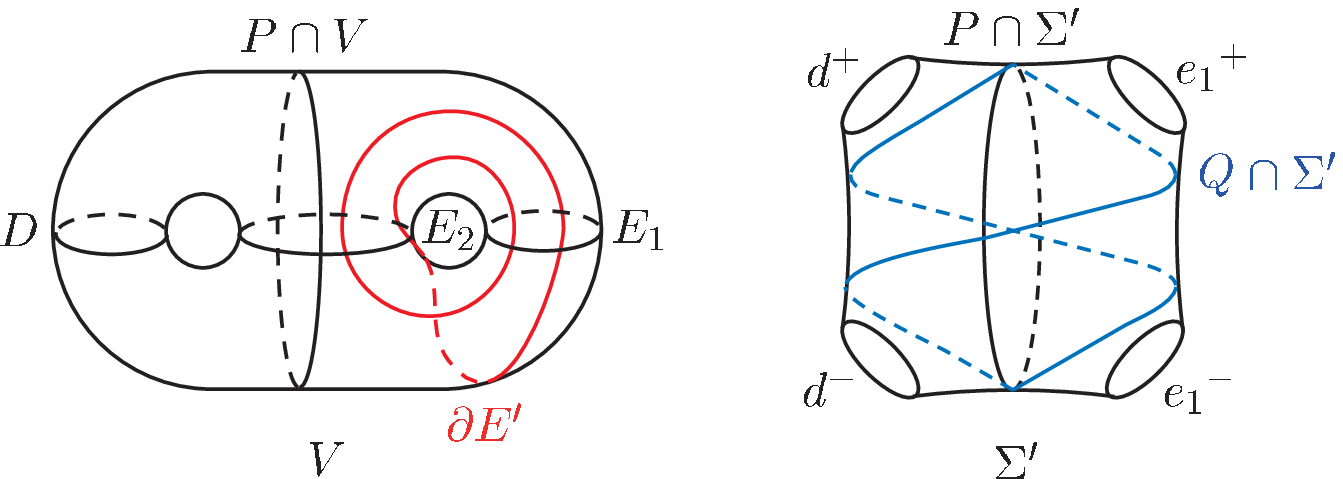}
\caption{}
\label{fig:fixed_disks_for_a_lens_space_and_sphere_bundle}
\end{center}
\end{figure}
See Figure \ref{fig:fixed_disks_for_a_lens_space_and_sphere_bundle}.
In the figure, the 4-holed sphere $\Sigma'$ is
obtained by cutting $\Sigma$ along $\partial D \cup \partial E_1$ and
the boundary circles $d^+$ and $d^-$ (${e_1}^+$ and ${e_1}^-$, respectively)
come from $\partial D$ ($\partial E_1$, respectively).
By $\alpha \in \mathcal{G}$, we denote the hyperelliptic involution
of both $V$ and $W$.
By $\beta \in \mathcal{G}$ ($\tau \in \mathcal{G}$, respectively),
we denote the extension of a half-Dehn twist (Dehn twist, respectively)
about the disk $P \cap V$ ($D$, respectively).
By $\gamma \in \mathcal{G}$, we denote an element of order 2
that interchanges $P$ and $Q$ (the existence of this element
is proved in Lemma \ref{lem:stabilizer of a pair of reducing spheres}).
By $\sigma \in \mathcal{G}$, we denote an element of order 2
that preserve $E'$ and that interchanges $E_1$ and $E_2$
(the existence of this element
will be proved in Lemma \ref{lem:semi-primitive disks are interchangible}).

Now we are ready to state the main theorem, which provides presentations of
genus two Goeritz groups of all non-prime 3-manifolds.
(Recall that the genus two Goeritz group of $(S^2 \times S^1) \# (S^2 \times S^1)$ is
the mapping class group of the genus two handlebody and its presentation
is already known.)

\begin{theorem}
\label{thm:presentations of the Goeritz groups}
Let $M_1$ be a lens space or $S^2 \times S^1$, and let $M_2$ be a lens space.
Let $(V, W; \Sigma)$ be a genus two Heegaard splitting for $M_1 \# M_2$.
Then the Goeritz group $\mathcal{G}$ of $(V, W; \Sigma)$
has the following presentation:
\begin{enumerate}
\item
If $M_1$ is a lens space,
\begin{enumerate}
\item
\label{thm:lens and lens not reversible}
$\langle \alpha \mid \alpha^2 \rangle \oplus
\langle \beta, \gamma_1, \gamma_2
\mid {\gamma_1}^2, {\gamma_2}^2 \rangle$ if $(V, W; \Sigma)$ is not symmetric;
\item
\label{thm:lens and lens reversible}
$\langle \alpha \mid \alpha^2 \rangle \oplus
\langle \beta, \gamma_1, \delta
\mid {\gamma_1}^2, \delta^2, \delta \beta \delta = \alpha \beta \rangle$ if
$(V, W; \Sigma)$ is symmetric;
\end{enumerate}
\item
\label{thm:lens and S2 S1}
If $M_1 = S^2 \times S^1$,
$\langle \alpha \mid \alpha^2 \rangle \oplus \langle \beta, \gamma, \sigma \mid \gamma^2, \sigma^2 \rangle
\oplus \langle \tau  \rangle$.
\end{enumerate}
\end{theorem}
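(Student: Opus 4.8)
The plan is to compute the Goeritz group $\mathcal{G}$ as the fundamental group (more precisely, via Bass--Serre theory, as the automorphism group) of the action on the tree of semi-primitive disks constructed in Theorem \ref{thm:contractibility for lens and lens} and the associated complex of Haken spheres from Theorem \ref{cor:contractibility of sphere complexes}. Since $\mathcal{G}$ acts simplicially on a tree (or a cone over a tree), the standard machinery expresses $\mathcal{G}$ as the fundamental group of a graph of groups, whose vertex and edge groups are the stabilizers of the cells. Thus the first step is to identify the vertex stabilizers and edge stabilizers of the action, and the transitivity result of Lemma \ref{lem:transitivity on reducing spheres} guarantees that the quotient graph is small (a single edge or a single vertex with a loop), so that the presentation of $\mathcal{G}$ assembles from just a few stabilizer subgroups via amalgamation or HNN extension.

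First I would establish, in a preliminary lemma (referenced above as Lemma \ref{lem:stabilizer of a pair of reducing spheres}), the structure of the stabilizer of a single Haken sphere $P$, together with the existence of the generators $\beta$, $\gamma_1$, $\gamma_2$ (respectively $\gamma$, $\sigma$) that reverse or permute the adjacent spheres and disks. The stabilizer of $P$ decomposes according to how $P$ splits $M$ into two punctured lens spaces (or a punctured $S^2 \times S^1$ and a punctured lens space): each factor contributes its genus-one Goeritz data, and by the work of \cite{BO83} together with the earlier sections the mapping classes fixing $P$ are generated by the half-twist $\beta$ about $P \cap V$, the hyperelliptic involution $\alpha$, and the involutions $\gamma_i$. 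The case distinctions in the theorem then correspond exactly to whether an orientation-reversing symmetry of $P$ exists (the symmetric case, supplying $\delta$) and to which summand is $S^2 \times S^1$ (supplying the central Dehn twist $\tau$). The relations ${\gamma_i}^2$, $\delta^2$, and $\delta \beta \delta = \alpha \beta$ should drop out of the geometry of how these symmetries conjugate one another on the four-holed sphere $\Sigma'$.

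Next I would feed this into the Bass--Serre decomposition. For case (1), the tree $\mathcal{SP}(V)$ (after barycentric subdivision, as in the proof of Theorem \ref{cor:contractibility of sphere complexes}) has the Haken spheres as its white vertices, and $\mathcal{G}$ acts with a single orbit of such vertices by Lemma \ref{lem:transitivity on reducing spheres}; the edge and vertex stabilizers then amalgamate. The appearance of $\alpha$ as a direct summand $\langle \alpha \mid \alpha^2 \rangle$ reflects that the hyperelliptic involution acts trivially on the tree and is central, so it splits off; the remaining factor is the quotient acting effectively. For case (2), the reducing disk $D$ is unique and fixed by all of $\mathcal{G}$, so $\tau$ (the Dehn twist about $D$) is central and splits off as a free $\mathbb{Z}$ factor $\langle \tau \rangle$, while the contractible complex $\mathcal{A}(\Sigma_D) \cong \mathcal{H}$ governs the remaining generators $\beta, \gamma, \sigma$.

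The main obstacle I anticipate is the verification that the listed generators actually generate the full Goeritz group and that the listed relations are a complete set of defining relations rather than merely valid ones. Existence of the involutions is geometric and routine once the stabilizer lemma is in place, but completeness requires carefully running the action on the tree and checking that no additional relations arise at the vertices of infinite valence, using that the link of each black vertex is isomorphic to the Farey subtree $\mathcal{F}_{\mathrm{odd}}$. In particular, proving the single nontrivial relation $\delta \beta \delta = \alpha \beta$ in the symmetric case, and showing that in the non-symmetric case the three generators $\beta, \gamma_1, \gamma_2$ satisfy \emph{only} the relations ${\gamma_1}^2, {\gamma_2}^2$ (so that the group is the free product $\mathbb{Z} * (\mathbb{Z}/2) * (\mathbb{Z}/2)$ on them), will be the crux, and I expect to treat it by exhibiting an equivariant retraction onto a fundamental domain and reading off the presentation of the automorphism group of the graph of groups directly.
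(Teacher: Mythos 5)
Your skeleton is the same as the paper's: Bass--Serre theory for the $\mathcal{G}$-action on the semi-primitive disk tree of Theorem \ref{thm:contractibility for lens and lens}, transitivity (Lemma \ref{lem:transitivity on reducing spheres}) forcing a small quotient, and assembly of the presentation from vertex and edge stabilizers. But the proposal misidentifies the stabilizers at exactly the point where the content of the proof lies. You claim that the mapping classes fixing a Haken sphere $P$ are generated by $\alpha$, $\beta$ and the involutions $\gamma_i$; this is false, and inconsistent with your own description of the $\gamma_i$ as permuting adjacent spheres: $\gamma_i$ interchanges $P$ with $Q_i$, so it does not stabilize $P$ at all. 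The setwise stabilizer of $P$ coincides with $\mathcal{G}_{\{D \cup E\}}$, since the sphere $P$ and the pair $\{D,E\}$ of disjoint semi-primitive disks determine one another (Lemmas \ref{lem:uniqueness of a disjoint semi-primitive disk} and \ref{lem:disjoint semi-primitive disks and reducing spheres}), and by Lemma \ref{lem:stabilizers of semi-primitive pairs} this group is only $\langle \alpha \mid \alpha^2 \rangle \oplus \langle \beta \rangle$ in the non-symmetric case, with $\delta$ added in the symmetric case. The $\gamma_i$ live instead in the \emph{disk} stabilizers $\mathcal{G}_{\{D\}}$ and $\mathcal{G}_{\{E\}}$ (the infinite-valence vertices), and computing those groups is not a deferred ``checking'' step but the main computation: it is a second, nested Bass--Serre decomposition, in which $\mathcal{G}_{\{D\}}$ acts on the tree $\mathcal{H}_D \cong \mathcal{F}_{\mathrm{odd}}$ of Haken spheres disjoint from $D$ with edge quotient, giving $\mathcal{G}_{\{D\}} = \mathcal{G}_{\{D,P\}} *_{\mathcal{G}_{\{D,P,Q_1\}}} \mathcal{G}_{\{D, P \cup Q_1\}} = \langle \alpha \mid \alpha^2 \rangle \oplus \langle \beta, \gamma_1 \mid {\gamma_1}^2 \rangle$ (the paper's Lemma \ref{lem:stabilizer of a semi-primitive disk}). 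Only after this does amalgamation over $\mathcal{SP}(V)$ (or over its barycentric subdivision in the symmetric case---this, not an HNN extension, is how edge inversions are handled) yield the stated presentations. Amalgamating the vertex group you describe would not produce $\langle \beta, \gamma_1, \gamma_2 \mid {\gamma_1}^2, {\gamma_2}^2 \rangle$.

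There is a second, related gap in case (2): you propose to read $\beta, \gamma, \sigma$ off the contractible complex $\mathcal{A}(\Sigma_D) \cong \mathcal{H}$. That complex is $3$-dimensional, and Bass--Serre theory yields graph-of-groups decompositions only for actions on trees; an action on a higher-dimensional simply connected complex requires different machinery and control over relations coming from $2$-cells. What the paper uses---and what your own phrase ``cone of a tree'' points to---is that the unique reducing disk $D$ is the cone point, fixed by all of $\mathcal{G}$, so $\mathcal{G} = \mathcal{G}_{\{D\}}$ acts on the base tree $\mathcal{SP}_D(V)$; by Lemma \ref{lem:semi-primitive disks are interchangible} the quotient is an edge, giving $\mathcal{G} = \mathcal{G}_{\{D,E_1\}} *_{\mathcal{G}_{\{D,E_1,E_2\}}} \mathcal{G}_{\{D, E_1 \cup E_2\}}$, where again the vertex group $\mathcal{G}_{\{D,E_1\}}$ is itself computed by a nested tree action (Lemma \ref{lem:stabilizer of a semi-primitive disk}(2)), and $\alpha$, $\tau$ split off as central factors as you predict.
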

We remark that, from Section \ref{sec:Classification of genus two Heegaard splittings},
once a genus two Heegaard splitting for $M = L(p_1, q_1) \# L(p_2, q_2)$ is given,
we may easily determine whether the splitting is symmetric or not.
If $L(p_1, q_1) \not\cong L(p_2, q_2)$ (as oriented manifolds), no genus two Heegaard splitting
of $M$ is symmetric.
If $L(p_1, q_1) \cong L(p_2, q_2)$, exactly one genus two Heegaard splitting
of $M$ is symmetric and the other, if any, is not.

Throughout the section,
for suitable subsets $A_1$, $A_2, \ldots , $ $A_k$ of $M_1 \# M_2$,
we denote by $\mathcal{G}_{\{ A_1, A_2, \ldots , A_k \}}$ the subgroup of
its Goeritz group $\mathcal{G}$ consisting of elements that preserve each of
$A_1$, $A_2, \ldots , $ $A_k$ setwise.


\begin{lemma}
\label{lem:stabilizer of a reducing sphere}
Let $M_1$ be a lens space or $S^2 \times S^1$, and let $M_2$ be a lens space.
Let $(V, W; \Sigma)$ be a genus two Heegaard splitting for $M_1 \# M_2$.
\begin{enumerate}
\item
If $M_1$ is a lens space, then
$\mathcal{G}_{\{D, P\}} =
\langle \alpha \mid \alpha^2 \rangle \oplus \langle \beta \rangle $.
\item
If $M_1$ is $S^2 \times S^1$, then
$\mathcal{G}_{\{D, P\}} =
\langle \alpha \mid \alpha^2 \rangle \oplus \langle \beta \rangle
\oplus \langle \tau \rangle$.
\end{enumerate}
\end{lemma}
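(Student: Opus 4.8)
The plan is to realize $\mathcal{G}_{\{D,P\}}$ as a subgroup of the mapping class group $\mathrm{MCG}(\Sigma)$ of the genus two surface and to compute it by cutting $\Sigma$ along the single curve $c = P \cap \Sigma$. First I would record the reduction: since the Goeritz group injects into $\mathrm{MCG}(\Sigma)$ (a homeomorphism preserving the splitting is determined up to splitting-isotopy by its restriction to $\Sigma$, as $V$ and $W$ are handlebodies), an element $g \in \mathcal{G}_{\{D,P\}}$ may be treated as a mapping class of $\Sigma$ fixing the isotopy classes of $c$ and $\partial D$. Because $D$ lies in the solid torus $V_1$ cut off from $V$ by $P$, fixing $D$ forbids interchanging the two sides of $P$, so $g$ preserves $V_1, V_2, W_1, W_2$ individually. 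By uniqueness of the meridian of a solid torus together with Lemma \ref{lem:uniqueness of a disjoint semi-primitive disk}, $g$ then also fixes the isotopy classes of $E$, $D'$ and $E'$. Thus $g$ lies in the stabilizer of the full configuration; in particular, on the one-holed torus $T_i$ obtained by cutting $\Sigma$ along $c$, it preserves the relevant meridian slopes: on $T_1$ the slopes $\partial D, \partial D'$ and on $T_2$ the slopes $\partial E, \partial E'$.

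Next I would check the ``$\supseteq$'' containment and the abstract isomorphism type. One verifies directly from the fixed notation (see Figures \ref{fig:fixed_disks_for_two_lens_spaces} and \ref{fig:fixed_disks_for_a_lens_space_and_sphere_bundle}) that $\alpha$ and $\beta$ preserve $D$ and $P$, that $\alpha^2 = \id$ while $\beta$ has infinite order with $\beta^2$ equal to the Dehn twist $t_c$ along $c$, and that $\alpha$ and $\beta$ commute (the support of $\beta$ is a neighborhood of $c$, disjoint from the curves moved by the central involution $\alpha$). This exhibits $\langle \alpha \mid \alpha^2\rangle \oplus \langle\beta\rangle \cong \Integer/2 \oplus \Integer$ inside $\mathcal{G}_{\{D,P\}}$. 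In case (2), where $D$ is the reducing disk, the curve $\partial D = \partial D'$ bounds disks in both $V$ and $W$, so the Dehn twist $\tau = t_{\partial D}$ lies in $\mathcal{G}$; it fixes $D$ and $P$, has infinite order, and commutes with $\alpha$ and $\beta$ (again by disjointness of supports), giving the extra free factor $\langle\tau\rangle$. In case (1) the analogous twist $t_{\partial D}$ does not extend over $W$, since $\partial D$ bounds no disk in $W$, consistent with the absence of this factor.

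The main work, and the expected obstacle, is the reverse containment, i.e. showing these elements generate everything. Here I would exploit the slope constraints from the first paragraph. When $M_2$ is a genuine lens space the slopes $\partial E, \partial E'$ fill $T_2$ with geometric intersection $p_2 \geq 2$, so the subgroup of $\mathrm{MCG}(T_2)$ fixing both is, modulo twisting along $\partial T_2 = c$, generated by the elliptic involution $\iota_2$ (an element of $SL(2,\Integer)$ fixing two distinct slopes must be $\pm I$). In case (1) the same holds on $T_1$ via $\partial D, \partial D'$, whereas in case (2) the two slopes on $T_1$ coincide ($\partial D = \partial D'$), so fixing this single slope additionally permits the Dehn twist $\tau = t_{\partial D}$, exactly accounting for the third factor. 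Reassembling $g$ from its restrictions to $T_1$ and $T_2$, the only freedom is the choice of elliptic involutions and the twisting along $c$; the requirement that the two restrictions agree on $c$ (matching both the $\pm I$ data and the boundary rotation) forces the elliptic involutions to be applied simultaneously, producing precisely the hyperelliptic involution $\alpha$, while the shared twisting along $c$ is a power of $\beta$ with $\beta^2 = t_c$. The delicate point, which I expect to require the most care, is this gluing step: controlling the interaction between the boundary-twisting $\Integer$'s and the boundary rotations of the elliptic involutions along $c$, so as to conclude that the finite part collapses to exactly $\langle\alpha\rangle \cong \Integer/2$ and that no further generators or relations appear.
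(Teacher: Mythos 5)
Your strategy differs from the paper's: you cut $\Sigma$ along the single curve $c=P\cap\Sigma$ into two one-holed tori and analyze each restriction through the $SL(2,\Integer)$-action on slopes, whereas the paper first normalizes $g$ by compositions with $\alpha$ and $\beta$ (and $\tau$, and an auxiliary curve $l$ in case (2)) so that it fixes all of $\partial D,\partial D',\partial E,\partial E'$ with orientations, then cuts $\Sigma$ along \emph{all} of these curves into several disks and a single annulus and finishes with Alexander's trick, thereby avoiding any gluing analysis. Your first two paragraphs (reduction to $\mathrm{MCG}(\Sigma)$, identification of the fixed slopes, and the containment $\supseteq$) are essentially sound. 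The problem is the regluing step, which you yourself flag as the delicate point: as stated it is wrong, and carried out literally it would prove the wrong group.

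Concretely, you claim that matching along $c$ ``forces the elliptic involutions to be applied simultaneously.'' There is no such constraint: every orientation-preserving homeomorphism of the circle is isotopic to the identity, so ``elliptic involution on $T_2$, identity on $T_1$'' glues with no obstruction, the $\pi$-rotation of the boundary being absorbed in a collar of $c$ --- and the resulting element is precisely $\beta$ (up to powers of $t_c$). Relatedly, $\beta$ is \emph{not} supported in a neighborhood of $c$ as you assert: the group of mapping classes supported in an annulus is the infinite cyclic group generated by the core twist, which contains no square root of $t_c$; rather, $\beta|_\Sigma$ is the identity on one holed torus, the elliptic involution on the other, interpolated by a half rotation of the collar, with $\beta^2=t_c$. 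Hence all four elements of $\{1,\iota_1\}\times\{1,\iota_2\}$ arise as restrictions of elements of $\mathcal{G}_{\{D,P\}}$ --- realized by $1$, $\alpha\beta$, $\beta$, $\alpha$ --- and the correct structure is the extension $1\to\langle t_c\rangle\to\mathcal{G}_{\{D,P\}}\to \{1,\iota_1\}\times\{1,\iota_2\}\to 1$, which is $\langle\alpha\rangle\oplus\langle\beta\rangle$. Your version, with only the diagonal image allowed, yields $\langle\alpha\rangle\oplus\langle t_c\rangle=\langle\alpha\rangle\oplus\langle\beta^2\rangle$, a proper index-two subgroup --- contradicting your own second paragraph, where you showed $\beta\in\mathcal{G}_{\{D,P\}}$. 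The argument can be repaired by replacing the simultaneity claim with this exact sequence (restriction to the two sides has kernel $\langle t_c\rangle$, and you must \emph{compute} its image rather than assume it is diagonal), checking that $\alpha,\beta$ give coset representatives; the same correction is needed in case (2), where the single-slope side contributes the extra factor $\langle\tau\rangle$.
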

\begin{proof}
Let $g$ be an element of $\mathcal{G}_{\{D, P\}}$.

\noindent (1)
Since $g$ preserves $D$, $g$ is orientation-preserving on
$P$.
We may assume that $g$ maps each of the disks
$D$, $D'$, $E$ and $E'$ to itself.
Moreover if $g$ is orientation-preserving on
$D$ ($E$, respectively), then so is on $D'$ ($E'$, respectively).
Hence by taking a composition with $\alpha$ and $\beta$,
if necessary, we may assume that
$g$ fixes $D \cup D' \cup E \cup E'$.
Now, $\Sigma$ cut off by $D \cup D' \cup E \cup E'$ consists of
several disks and a single annulus.
By Alexander's trick, boundary-preserving homeomorphisms
on a disk is unique up to isotopy.
Also, boundary-preserving homeomorphisms
on an annulus are determined by Dehn twist
about its core circle up to isotopy.
This implies that $g$ is a power of $\beta$.

\noindent (2)
Let $l$ be a simple closed curve in $\Sigma$ disjoint from $P$
that intersects $\partial D$ in a single point.
Let $g$ be an element of $\mathcal{G}_{\{D, P\}}$.
Since $g$ preserves $D$, $g$ is orientation-preserving on
$P$.
We may assume that $g$ maps each of the disks
$D$, $D'$, $E$ and $E'$ to itself.
Moreover if $g$ is orientation-preserving on
$D$ ($E$, respectively), then so is on $D'$ and $l$ ($E'$ and $l$, respectively).
Hence modulo the action of $\alpha$ and $\tau$,
$g$ fixes $D \cup D' \cup l \cup E \cup E'$.
The remaining argument is exactly the same as (1).
\end{proof}

\begin{lemma}
\label{lem:transitivity on vertices connected to a vertex}
Let $M_1$ be a lens space or $S^2 \times S^1$, and let $M_2$ be a lens space.
Let $(V, W; \Sigma)$ be a genus two Heegaard splitting for $M_1 \# M_2$.
\begin{enumerate}
\item
Suppose that $M_1$ is a lens space.
Let
$Q'_1$ be a Haken sphere of $(V,W;\Sigma)$ disjoint from
$D \cup E'$ such that
$|P  \cap \Sigma \cap Q'_1| = 4$.
Then a  power of $\beta$ maps $Q_1$ to $Q'_1$.
\item
Suppose that $M_1$ is $S^2 \times S^1$.
Let $Q'$ be a Haken sphere of $(V,W;\Sigma)$ disjoint from
$D \cup E$ such that
$|P  \cap \Sigma \cap Q'| = 4$.
Then a power of $\beta$ maps $Q$ to $Q'$.
\end{enumerate}
\end{lemma}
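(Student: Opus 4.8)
The plan is to reduce both statements to the combinatorics of the Farey tree $\mathcal{F}_{\mathrm{odd}}$ that appears in the proof of Theorem \ref{cor:contractibility of sphere complexes}, and then to pin down the action of $\beta$ on it. In case (1), by Lemma \ref{lem:uniqueness of a disjoint semi-primitive disk} the disk $E'$ is the unique non-separating disk of $W$ disjoint from $D$, and $P$, $Q_1$, $Q'_1$ are all disjoint from $D \cup E'$. Cutting $\Sigma$ along $\partial D \cup \partial E'$ produces the $4$-holed sphere $\Sigma'$, and exactly as in the proofs of Lemma \ref{lem:sufficiency and necessity condition to be semi-primitive} and Theorem \ref{cor:contractibility of sphere complexes}, the Haken spheres disjoint from $D \cup E'$ correspond bijectively to the arcs in $\Sigma'$ joining $d^+$ and $d^-$, and hence, via the slope assignment, to the vertices of $\mathcal{F}_{\mathrm{odd}}$.

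Under this identification I would observe that the condition $|P \cap \Sigma \cap Q| = 4$ translates into disjointness of the corresponding arcs, that is, into Farey adjacency; thus $Q_1$ and $Q'_1$ are precisely two of the vertices of $\mathcal{F}_{\mathrm{odd}}$ adjacent to the vertex $P$. Normalizing $P$ to the vertex $1/0$, its neighbors in $\mathcal{F}_{\mathrm{odd}}$ form the $\mathbb{Z}$-indexed family of odd integers. I would next record that $\beta$ acts on this tree fixing $P$: since $\partial D$ and $\partial E'$ lie on opposite sides of $\mu = P \cap \Sigma$, and are in particular disjoint from $\mu$, the half-Dehn twist $\beta$ about the disk $P \cap V$ may be taken to fix both $D$ and $E'$. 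Hence $\beta$ preserves $\Sigma'$, fixes the vertex $P$ of $\mathcal{H}_D \cong \mathcal{F}_{\mathrm{odd}}$, and permutes the neighbors of $P$, i.e. acts on the family of odd integers.

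The heart of the argument, and the step I expect to be the main obstacle, is to show that this action is by a single unit translation of the $\mathbb{Z}$-indexed neighbors, equivalently that $\beta$ carries the neighbor of slope $n$ to that of slope $n+2$. A priori a simplicial automorphism of $\mathcal{F}_{\mathrm{odd}}$ fixing $P$ could act on this line either as a translation by an even amount or as a reflection, and the lemma requires the former with generator step, so merely knowing that $\beta$ preserves $\mathcal{F}_{\mathrm{odd}}$ is not enough. I would settle this by lifting to the covering space $\tilde{\Sigma'}$ of the proof of Theorem \ref{cor:contractibility of sphere complexes}, in which $\mu$ lifts to the vertical lines and the lifts of $d^{\pm}$ to the integer lattice: there $\beta$ is realized as the shear fixing the vertical direction that moves each lift of $d^{+}$ to the next, so that the arc of slope $n$ is sent to the arc of slope $n+2$, the even step being forced by the odd-numerator condition defining $\mathcal{F}_{\mathrm{odd}}$. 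Granting this computation, $\langle \beta \rangle$ acts transitively on the odd integers, so some power of $\beta$ carries $Q_1$ to $Q'_1$, proving (1). For (2) the scheme is identical: cutting $\Sigma$ along $\partial D \cup \partial E_1$ yields the analogous $4$-holed sphere and Farey tree of Haken spheres disjoint from $D \cup E_1$, with $P$ a vertex and $Q$, $Q'$ two of its neighbors, and the same half-twist $\beta$ translates transitively through the neighbors of $P$, so again a power of $\beta$ sends $Q$ to $Q'$.
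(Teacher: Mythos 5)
Your proposal is correct, and its skeleton is the same as the paper's: the paper too replaces $P$, $Q_1$, $Q_1'$ by arcs $\alpha_P$, $\alpha_{Q_1}$, $\alpha_{Q_1'}$ in $\Sigma'$ joining $d^+$ to $d^-$, and uses the hypothesis $|P\cap\Sigma\cap Q_1|=|P\cap\Sigma\cap Q_1'|=4$ to make the latter two disjoint from $\alpha_P$. Where you differ is at the crux: the paper merely notes that $\alpha_P$ cuts $\Sigma'$ into a pair of pants and asserts that some power of $\beta$ carries $\alpha_{Q_1}$ to $\alpha_{Q_1'}$, leaving implicit both the $\Integer$-parametrization of the arcs disjoint from $\alpha_P$ and the fact that $\beta$ translates this family by one step; you make this explicit by identifying those arcs with the link of $P$ in $\mathcal{F}_{\mathrm{odd}}$ (the odd integers) and computing $\beta$ as a shear in the cover $\tilde{\Sigma'}$, acting by $n \mapsto n+2$. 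Your route uses heavier machinery (legitimately --- Theorem \ref{cor:contractibility of sphere complexes} precedes this lemma, so there is no circularity), but it supplies exactly what the paper's one-line assertion omits: it excludes a reflection, fixes the step size, and shows the half-twist is genuinely needed, since the full twist $\beta^2$ about $\mu$ acts by $n \mapsto n+4$ and hence has two orbits on the neighbors of $P$. Two refinements you should make. First, what forces the action on the link of $P$ to be a translation or reflection is not that $\beta$ is a simplicial automorphism of the tree (those may permute the link arbitrarily), but that it is induced by an orientation-preserving homeomorphism respecting the slope structure; your covering-space computation provides this, so phrase the dichotomy accordingly. Second, in part (2) you cannot quote Theorem \ref{cor:contractibility of sphere complexes} literally, because for $M_1 = S^2 \times S^1$ its proof works with the $2$-holed torus $\Sigma_D$ rather than a $4$-holed sphere; you must rerun the slope construction on $\Sigma$ cut along $\partial D \cup \partial E_1$, where the bijection between arcs joining $d^+$, $d^-$ and Haken spheres holds because $D$ is reducing (Lemma \ref{lem:the uniqueness of the reducing disk}(1)). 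With these adjustments your argument is complete; note that the paper's own treatment of (2) (``exactly the same as (1)'') glosses over the same point.
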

\begin{proof}
(1) Let $\Sigma'$ be the 4-holed sphere
obtained by cutting $\Sigma$ along $\partial D \cup \partial E'$.
Let $d^+$ and $d^-$ (${e'}^+$ and ${e'}^-$, respectively)
be the two boundary circles of $\Sigma'$ coming from
$\partial D$ ($\partial E'$, respectively).
Let $\alpha_P$, $\alpha_{Q_1}$ and $\alpha_{Q_1'}$ be simple arcs in $\Sigma'$ connecting $d^+$ and $d^-$
such that the frontiers of regular neighborhoods of
$d^+ \cup \alpha_P \cup d^-$, $d^+ \cup \alpha_{Q_1} \cup d^-$ and $d^+ \cup \alpha_{Q_1'} \cup d^-$
are $P \cap \Sigma$, $Q_1 \cap \Sigma$ and $Q'_1 \cap \Sigma$,
respectively.
We may assume that $\alpha_P \cap \alpha_{Q_1} = \alpha_P \cap \alpha_{Q_1'} = \emptyset$ since
$|P  \cap \Sigma \cap Q_1| =  |P  \cap \Sigma \cap Q'_1| = 4$.
Since $\alpha_P$ cuts $\Sigma'$ into a pair of pants,
a certain power of $\beta$ carries $\alpha_{Q_1}$ to $\alpha_{Q_1'}$.
The proof of (2) is exactly the same as (1).
\end{proof}

\begin{lemma}
\label{lem:stabilizer of a pair of reducing spheres}
Let $M_1$ be a lens space or $S^2 \times S^1$, and let $M_2$ be a lens space.
Let $(V, W; \Sigma)$ be a genus two Heegaard splitting for $M_1 \# M_2$.
\begin{enumerate}
\item
If $M_1$ is a lens space, then
$\mathcal{G}_{\{D, P, Q\}} = \langle \alpha \mid \alpha^2 \rangle$,
and $\mathcal{G}_{\{D, P \cup Q\}} = \langle \alpha \mid \alpha^2 \rangle \oplus
\langle \gamma_1 \mid {\gamma_1}^2 \rangle$.
\item
If $M_1$ is $S^2 \times S^1$, then
$\mathcal{G}_{\{D, P, Q\}} = \langle \alpha \mid \alpha^2 \rangle \oplus \langle \tau \rangle$,
and $\mathcal{G}_{\{D, P \cup Q\}} = \langle \alpha \mid \alpha^2 \rangle \oplus
\langle \gamma \mid \gamma^2 \rangle  \oplus \langle \tau \rangle$.
\end{enumerate}
\end{lemma}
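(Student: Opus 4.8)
The goal is to compute two stabilizer subgroups of the Goeritz group $\mathcal{G}$: the pointwise stabilizer $\mathcal{G}_{\{D, P, Q\}}$ of the reducing disk $D$ together with the two Haken spheres $P$ and $Q$, and the setwise stabilizer $\mathcal{G}_{\{D, P \cup Q\}}$ of $D$ together with the pair $\{P, Q\}$. The overall strategy is to build the second group out of the first by analyzing a short exact sequence. Namely, the elements of $\mathcal{G}_{\{D, P \cup Q\}}$ either fix both $P$ and $Q$ (these form the normal subgroup $\mathcal{G}_{\{D, P, Q\}}$) or interchange them, and the interchanging elements constitute a single coset. So the heart of the argument is first to pin down $\mathcal{G}_{\{D, P, Q\}}$ exactly, and then to exhibit one explicit involution swapping $P$ and $Q$ — which is precisely the element $\gamma_1$ (respectively $\gamma$) whose existence this lemma is asserting.

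\textbf{Computing $\mathcal{G}_{\{D,P,Q\}}$.}
First I would fix the picture: $P$ and $Q$ are disjoint from $D \cup E'$ and $E \cup D'$ respectively (in the lens-lens case), meeting $P$ in four points, and by the setup $D, E, D', E'$ are the semi-primitive meridian disks cut off by $P$. An element $g$ fixing all of $D$, $P$, $Q$ already lies in $\mathcal{G}_{\{D,P\}}$, which by Lemma \ref{lem:stabilizer of a reducing sphere} is $\langle \alpha \mid \alpha^2 \rangle \oplus \langle \beta \rangle$ when $M_1$ is a lens space (with an extra $\langle \tau \rangle$ factor when $M_1 = S^2 \times S^1$, coming from the Dehn twist along the reducing disk). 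The task is therefore to determine which powers of $\beta$ can also fix $Q$. Since $\beta$ is the extension of a half-twist about the disk $P \cap V$, and this half-twist acts on the arcs of $\Sigma'$ connecting $d^+$ and $d^-$ as a shift (this is exactly the action used in Lemma \ref{lem:transitivity on vertices connected to a vertex}), a nontrivial power of $\beta$ moves the arc $\alpha_Q$ off itself. Hence no nonzero power of $\beta$ fixes $Q$, and $\mathcal{G}_{\{D,P,Q\}} = \langle \alpha \mid \alpha^2 \rangle$ (plus $\langle \tau \rangle$ in the $S^2 \times S^1$ case, since $\tau$ is supported near $D$ and fixes $P$ and $Q$ simultaneously).

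\textbf{Exhibiting the swap and assembling the extension.}
Next I would produce an order-two element interchanging $P$ and $Q$. Working in the four-holed sphere $\Sigma'$ with the arcs $\alpha_P$ and $\alpha_Q$ disjoint, there is an evident involution of $\Sigma'$ — a rotation that swaps the two holes coming from $\partial D$ and swaps $\alpha_P$ with $\alpha_Q$ while preserving $\partial E'$ — and the main point is that this involution extends over both handlebodies. The extension criterion is the word-theoretic one used in Lemma \ref{lem:two reducing spheres and transitivity}: one checks that the image of $\partial D'$ (and of $\partial E$) under the candidate map still bounds a disk, by reading off the word it determines on the appropriate free generators and applying the Loop Theorem; the involutive symmetry of the configuration forces these words to represent trivial or primitive-power elements. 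Calling this involution $\gamma_1$ (resp. $\gamma$), it commutes with $\alpha$ and squares to the identity, so $\mathcal{G}_{\{D, P \cup Q\}}$ is generated by $\mathcal{G}_{\{D,P,Q\}}$ together with $\gamma_1$, giving the claimed direct sum $\langle \alpha \mid \alpha^2 \rangle \oplus \langle \gamma_1 \mid \gamma_1^2 \rangle$ (with the extra $\langle \tau \rangle$ summand in the $S^2 \times S^1$ case, after verifying $\gamma$ commutes with $\tau$).

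\textbf{The main obstacle.}
I expect the hard part to be the construction of $\gamma_1$ (and $\gamma$): showing that the symmetric involution of $\Sigma'$ genuinely extends to a homeomorphism of $M_1 \# M_2$ preserving the splitting, rather than merely to a mapping class of the surface. This is where the extension-by-disks argument and the careful word computation enter, and it is also the reason the existence of $\gamma_1$ was deferred to this lemma in the earlier list of generators. Establishing that $\gamma_1$ can be taken to be an involution (not just a finite-order or infinite-order swap) may require composing with a suitable power of $\beta$ to normalize it, exactly as half-twists were used to adjust maps in Lemma \ref{lem:two reducing spheres and transitivity}.
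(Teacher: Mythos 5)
Your overall skeleton agrees with the paper's: both arguments rest on the computation of $\mathcal{G}_{\{D,P\}}$ (Lemma \ref{lem:stabilizer of a reducing sphere}), the observation that no nonzero power of $\beta$ fixes $Q$ (so that $\mathcal{G}_{\{D,P,Q\}}=\langle \alpha \mid \alpha^2\rangle$, with the extra $\langle\tau\rangle$ factor in case (2)), and the production of an order-two element interchanging $P$ and $Q$, which then generates the index-two extension $\mathcal{G}_{\{D,P\cup Q\}}$. The gap lies in the one step that is the real content of the lemma: the existence of $\gamma_1$ (resp.\ $\gamma$). You propose to take an ``evident'' rotation of the four-holed sphere $\Sigma'$ swapping $\alpha_P$ and $\alpha_Q$ and to verify that it extends over both handlebodies, but the verification is only asserted, and its stated target is wrong: for the map to extend, the images of $\partial E$ and $\partial D'$ must bound disks in $V$ and $W$, i.e.\ must represent the \emph{trivial} elements of $\pi_1(V)$ and $\pi_1(W)$ (then the Loop Theorem applies); ``trivial or a power of a primitive element'' is the criterion for semi-primitivity (Lemma \ref{lem:semi-primitive disks and words}) and a power of a primitive element bounds no disk. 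Moreover, the cautionary example is already in the paper: in the proof of Lemma \ref{lem:two reducing spheres and transitivity} the analogous naive map $\tau_\nu$ does \emph{not} extend and must be corrected to $\tau_{\partial E'}\circ\tau_{\partial D}\circ\tau_\nu$, using twists that are not individually elements of $\mathcal{G}$; your rotation may likewise fail to extend, and after such a correction it need not be an involution any more. Your proposed repair---composing with a power of $\beta$---cannot affect extendability at all, since $\beta\in\mathcal{G}$ already extends, so $\beta^n\circ h$ extends if and only if $h$ does.

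The paper sidesteps this computation entirely. It takes the element $g\in\mathcal{G}$ carrying $P$ to $Q_1$ furnished by Lemma \ref{lem:transitivity on reducing spheres} (whose extendability was established there by exactly the word computation you would otherwise have to redo), arranges that $g$ preserves $D$ and $E'$, and composes with a power of the half-twist $\beta_1'$ about $Q_1$ so that the result interchanges $P$ and $Q_1$; it then proves this composition has order two by noting that its square fixes $D\cup E'\cup P\cup Q_1$ and, by the arc argument of Lemma \ref{lem:transitivity on vertices connected to a vertex}, restricts on $\Sigma$ to a power of the Dehn twist along $\partial E'$, hence is isotopic to the identity. To complete your route you would have to actually carry out the word computation (in the spirit of Figure \ref{fig:transitivity2}) showing that the images of $\partial E$ and $\partial D'$ under your candidate involution are null-homotopic in $V$ and $W$; the cleaner fix is to replace your construction of the swap by the paper's transitivity-plus-normalization argument.
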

\begin{proof}
(1) We first show the existence of the element $\gamma_1 \in \mathcal{G}$.
Let $\beta'_1$ denote a half-Dehn twist about the sphere $Q_1$.
By Lemma \ref{lem:transitivity on reducing spheres},
there exists an element $g \in \mathcal{G}$ that carries $P$ to $Q_1$.
We may assume without loss of generality that $g$ maps $D$ to $D$ and $E'$ to $E'$.
By Lemma \ref{lem:transitivity on vertices connected to a vertex},
a certain power ${\beta'_1}^n$ of $\beta'_1$ carries $g(Q_1)$ to $P$.
We remark that ${\beta'_1}^n \circ g$ interchanges $P$ and $Q_1$ and
this map carries $D$ to $D$ and $E'$ to $E'$.
Up to isotopy, we may assume that $({\beta'_1}^n \circ g)^2$ fixes $D \cup E' \cup P \cup Q_1$.
Then by cutting $\Sigma$ along $\partial D \cup \partial E'$ and considering
simple arcs connecting the two holes coming from $\partial D$ as in the proof of
Lemma \ref{lem:transitivity on vertices connected to a vertex},
we can easily check that $({\beta'_1}^n \circ g)^2$ restricted to $\Sigma$
is a power of Dehn twist along $\partial E'$.
Hence $({\beta'_1}^n \circ g)^2$ is isotopic to the identity.
This implies that ${\beta'_1}^n \circ g$ is the required element $\gamma_1$.
Since $\tau$ is commutative with any element of $\mathcal{G}$ that preserves
$D$, (2) follows from the same argument as (1).
\end{proof}

\begin{lemma}
\label{lem:stabilizer of a semi-primitive disk}
Let $M_1$ be a lens space or $S^2 \times S^1$, and let $M_2$ be a lens space.
Let $(V, W; \Sigma)$ be a genus two Heegaard splitting for $M_1 \# M_2$.
Let $D$ be a semi-primitive disk in $V$.
\begin{enumerate}
\item
If $M_1$ is a lens space,
then $\mathcal{G}_{\{D\}} =
\langle \alpha \mid \alpha^2 \rangle \oplus \langle \beta, \gamma \mid \gamma^2 \rangle$.
\item
If $M_1$ is $S^2 \times S^1$,
then $\mathcal{G}_{\{ D , E_1 \}} =
\langle \alpha \mid \alpha^2 \rangle \oplus \langle \beta, \gamma \mid \gamma^2 \rangle
\oplus \langle \tau \rangle$.
\end{enumerate}
\end{lemma}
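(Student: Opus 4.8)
The plan is to read off the group from its action on a tree of Haken spheres and then apply Bass--Serre theory. All the vertex and edge stabilizers that will occur have already been determined in Lemmas \ref{lem:stabilizer of a reducing sphere} and \ref{lem:stabilizer of a pair of reducing spheres}, so the only genuinely new ingredient is the transitivity of the action. In case (1), let $T$ be the full subcomplex $\mathcal{H}_{D}$ of $\mathcal{H}$ spanned by the Haken spheres disjoint from $D$; as seen in the proof of Theorem \ref{cor:contractibility of sphere complexes}, $T$ is a tree, every vertex of which is disjoint from $D \cup E'$ for the unique disk $E'$ of $W$ disjoint from $D$ (Lemma \ref{lem:uniqueness of a disjoint semi-primitive disk}), and $\mathcal{G}_{\{D\}}$ acts on $T$ simplicially.

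First I would show that this action is transitive on vertices and on edges. Fix the base vertex $P$ and an adjacent vertex $Q$. The order-two element $\gamma$ of Lemma \ref{lem:stabilizer of a pair of reducing spheres} inverts the edge $\{P, Q\}$, so that $\gamma(P) = Q$, while by Lemma \ref{lem:transitivity on vertices connected to a vertex} a suitable power of $\beta$ fixes $P$ and carries $Q$ to any prescribed neighbor $R$ of $P$. Hence $\beta^{n}\gamma$ carries $P$ to $R$, and since $T$ is connected, induction on the distance from $P$ gives transitivity on vertices. Transitivity on edges then follows at once, because $\beta$ already permutes the edges incident to $P$ transitively.

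Next, the stabilizer in $\mathcal{G}_{\{D\}}$ of the vertex $P$ is $\mathcal{G}_{\{D, P\}}$, that of the edge $\{P, Q\}$ is $\mathcal{G}_{\{D, P \cup Q\}}$, and that of a half-edge of $\{P,Q\}$ (fixing $P$ and $Q$) is $\mathcal{G}_{\{D, P, Q\}}$. Passing to the barycentric subdivision, on which the action is without inversions, the quotient graph is the single edge joining the orbit of $P$ to the orbit of the midpoint of $\{P, Q\}$, so Bass--Serre theory yields
$$\mathcal{G}_{\{D\}} \;=\; \mathcal{G}_{\{D, P\}} \ast_{\mathcal{G}_{\{D, P, Q\}}} \mathcal{G}_{\{D, P \cup Q\}}.$$
Inserting $\mathcal{G}_{\{D,P\}} = \langle\alpha\rangle \oplus \langle\beta\rangle$, $\mathcal{G}_{\{D,P,Q\}} = \langle\alpha\rangle$ and $\mathcal{G}_{\{D, P \cup Q\}} = \langle\alpha\rangle \oplus \langle\gamma\rangle$ from Lemmas \ref{lem:stabilizer of a reducing sphere}(1) and \ref{lem:stabilizer of a pair of reducing spheres}(1), and using that $\alpha$ is a common central direct factor of both vertex groups, the amalgam splits as $\langle\alpha \mid \alpha^2\rangle \oplus (\langle\beta\rangle \ast \langle\gamma \mid \gamma^2\rangle) = \langle\alpha \mid \alpha^2\rangle \oplus \langle\beta, \gamma \mid \gamma^2\rangle$, as claimed.

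For case (2), since $D$ is the unique reducing disk (Lemma \ref{lem:the uniqueness of the reducing disk}) every homeomorphism preserves it, so $\mathcal{G}_{\{D, E_1\}} = \mathcal{G}_{\{E_1\}}$, and I would run the identical argument on the tree of Haken spheres disjoint from $E_1$, which is again a tree by the arc model of the proof of Theorem \ref{cor:contractibility of sphere complexes}. Here one must check that the relevant stabilizers agree with those of Lemmas \ref{lem:stabilizer of a reducing sphere}(2) and \ref{lem:stabilizer of a pair of reducing spheres}(2): as $E_1$ is the meridian of the lens-space side of $P$, any homeomorphism fixing $D$ and $P$ preserves that side and therefore fixes $E_1$, so indeed $\mathcal{G}_{\{E_1, P\}} = \mathcal{G}_{\{D, P\}}$, and similarly for the edge. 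Since the central factor $\langle\tau\rangle$ occurs in every stabilizer it splits off, and the same computation gives $\langle\alpha \mid \alpha^2\rangle \oplus \langle\beta, \gamma \mid \gamma^2\rangle \oplus \langle\tau\rangle$. I expect the transitivity of the action to be the main obstacle, together with this last verification in case (2) that the sphere data already pins down $E_1$; granting these, the Bass--Serre assembly and the splitting of the amalgam are routine.
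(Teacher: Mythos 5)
Your proposal is correct and takes essentially the same route as the paper: both let the stabilizer act on the tree of Haken spheres disjoint from $D$ (resp.\ from $D\cup E_1$ in case (2)), pass to the barycentric subdivision so that the quotient is a single edge, and apply Bass--Serre theory with the vertex and edge stabilizers computed in Lemmas \ref{lem:stabilizer of a reducing sphere} and \ref{lem:stabilizer of a pair of reducing spheres}. The only difference is one of detail, in your favor: you actually verify the vertex/edge transitivity (via $\gamma$, powers of $\beta$, and induction on distance), the identification $\mathcal{G}_{\{D,P\}}=\mathcal{G}_{\{D,E_1,P\}}$ in case (2), and the splitting off of the central factor $\langle\alpha\mid\alpha^2\rangle$ from the amalgam, all of which the paper asserts without proof.
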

\begin{proof}
(1) By Lemma \ref{lem:uniqueness of a disjoint semi-primitive disk},
$E'$ is the unique non-separating disk in $W$ disjoint from
$D$.
This implies that each element of $\mathcal{G}_D$ preserves $E'$.
Let $\Sigma'$ be the 4-holed sphere
obtained by cutting $\Sigma$ along $\partial D \cup \partial E'$.
Let $d^+$ and $d^-$ (${e'}^+$ and ${e'}^-$, respectively)
be the two boundary circles of $\Sigma'$ coming from
$\partial D$ ($\partial E'$, respectively).
As in the proof of Theorem \ref{cor:contractibility of sphere complexes},
let $\mathcal{H}_{D}$ be the full subcomplex of the complex $\mathcal{H}$
of Haken spheres of $(V, W; \Sigma)$ spanned by the vertices corresponding to Haken spheres disjoint from $D$.
Then $\mathcal{H}_{D}$ is a tree as we have seen in
Lemma \ref{cor:contractibility of sphere complexes}.
Let $\mathcal{H}'_{D}(V)$ be the first barycentric subdivision of
$\mathcal{G}_{D}$.
The group $\mathcal{G}_{D}$ acts on $\mathcal{H}'_{D}(V)$ simplicially.
Moreover, the quotient of $\mathcal{H}'_{D}(V)$ by the action of
$\mathcal{G}_{D}$ is a single edge.
Then by the Bass-Serre theory on groups acting on trees \cite{Ser77}, we have
$\mathcal{G}_{\{D\}} =
\mathcal{G}_{\{D, P \}} *_{\mathcal{G}_{\{D, P, Q_1 \}}}
\mathcal{G}_{\{D, P \cup Q_1 \}} $.
Now, (1) follows from Lemmas \ref{lem:stabilizer of a reducing sphere} and
\ref{lem:stabilizer of a pair of reducing spheres}.

\noindent (2)
Cutting $\Sigma$ along $D \cup E_1$ instead of $D \cup E'$,
we get the presentation by almost the same argument as (1).
\end{proof}

\begin{lemma}
\label{lem:semi-primitive disks are interchangible}
Let $(V, W; \Sigma)$ be the genus two Heegaard splitting for the
connected sum of $S^2 \times S^1$ and a lens space.
Let $E_1$ and $E_2$ be disjoint, non-isotopic, semi-primitive and non-reducing disks in $V$.
Then there exists an element of the Goeritz group
$\mathcal{G}$ of the Heegaard splitting $(V, W; \Sigma)$ that interchanges $E_1$ and $E_2$.
\end{lemma}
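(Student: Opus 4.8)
The plan is to realize the required element as an orientation-preserving involution that reverses the handle of $V$ captured by the reducing disk, and to check that the \emph{same} involution reverses the corresponding handle of $W$. First I recall from Lemmas \ref{lem:the uniqueness of the reducing disk} and \ref{lem:disjoint} that $D$ is the unique reducing disk of $V$ and that $E_1, E_2$, being non-reducing semi-primitive disks, are both disjoint from $D$; I may of course also take $E_1$ and $E_2$ disjoint from each other. Cutting $V$ along $D$ and $W$ along the disk $D'$ bounded by $\partial D$ produces solid tori $V_D$ and $W_{D'}$, and the pair $(V_D, W_{D'})$ is a genus one Heegaard splitting of the lens space summand $M_2 = L(p_2, q_2)$. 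Let $\Sigma_D$ be the $2$-holed torus obtained by cutting $\Sigma$ along $\partial D$, with boundary circles $d^+$ and $d^-$, and let $T = \partial V_D$ be the torus obtained by capping $d^+$ and $d^-$. In this picture $E_1$ and $E_2$ are meridian disks of $V_D$.

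Next I would fix the combinatorial position of $E_1$ and $E_2$. Since meridian disks of a solid torus are isotopic, $\partial E_1$ and $\partial E_2$ are parallel curves on $T$; if they were to cobound an annulus in $\Sigma_D$ meeting neither $d^+$ nor $d^-$, the isotopy between them could be kept disjoint from $D$ and $E_1, E_2$ would be isotopic in $V$, contrary to hypothesis. Hence $\partial E_1$ and $\partial E_2$ separate $d^+$ from $d^-$, cutting $T$ into two annuli, one containing each boundary circle. Choosing coordinates $T = \Real^2 / \Integer^2$ with the $V_D$-meridian horizontal, I may take representatives $\partial E_1 = \{ y = 0 \}$ and $\partial E_2 = \{ y = 1/2 \}$, and, normalizing the representative of $\partial D$, arrange the circles $d^+, d^-$ symmetrically so that the half-rotation $\rho(x,y) = (x, y + 1/2)$ interchanges them; then $\rho$ preserves $\Sigma_D$ and interchanges $E_1$ and $E_2$. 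The central point is that $\rho$ extends simultaneously over $V$ and $W$. Being a translation, $\rho$ is isotopic to the identity on $T$ and therefore sends every essential curve to a parallel one; in particular it preserves the meridian slope of $V_D$ \emph{and} that of $W_{D'}$, so it extends to orientation-preserving homeomorphisms of both solid tori that agree on $T$ and glue to a homeomorphism of $M_2$ preserving the splitting $(V_D, W_{D'})$. This is exactly where a naive reflection swapping $E_1$ and $E_2$ could fail: a reflection acts non-trivially on $H_1(T)$ and need not carry the meridian of $W_{D'}$ to a meridian, whereas $\rho$ is subject to no such obstruction. Since $\rho$ interchanges $d^+$ and $d^-$ (and, on the $W$ side, the two boundary circles coming from $\partial D'$), reattaching the handles of $V$ and of $W$ by a flip extends $\rho$ to an orientation-preserving homeomorphism $\sigma$ of $M$ with $\sigma(V) = V$ and $\sigma(W) = W$, that is, to an element of $\mathcal{G}$ with $\sigma(E_1) = E_2$ and $\sigma^2$ isotopic to the identity. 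Finally, taking $E'$ to be a meridian disk of $W_{D'}$ placed symmetrically with respect to $\rho$ gives a non-separating disk of $W$ disjoint from $D'$, hence semi-primitive by Lemma \ref{lem:the uniqueness of the reducing disk}(1), whose boundary has the same type with respect to $E_1$ and $E_2$; this is the disk used in the fixed notation.

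The step I expect to demand the most care is this simultaneous extension over $V$ and $W$ together with the requirement that $\sigma$ preserve $\Sigma$. The latter forces the symmetric normalization of the boundary circles $d^+, d^-$ (so that $\rho$ preserves $\Sigma_D$), while the former is what dictates the use of the orientation-preserving rotation $\rho$ rather than a reflection, since only a map isotopic to the identity on $T$ is automatically equivariant with respect to both Heegaard solid tori of $M_2$. Everything else reduces to the standard facts that meridian disks of a solid torus are unique up to isotopy and that, by Alexander's trick, a homeomorphism of a handlebody is determined by its effect on a complete system of meridian disks.
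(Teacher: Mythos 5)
Your proof of the lemma as stated is essentially correct, but it follows a genuinely different route from the paper's. The paper argues in two steps: first it asserts (as ``easy to see'') that \emph{some} non-reducing semi-primitive disk $\widehat{E}_2$ can be interchanged with $E_1$ by an element of $\mathcal{G}$; then it observes that both $E_2$ and $\widehat{E}_2$ are meridian disks of the solid torus $V$ cut along $D$, so a pushing map of the foot $d^+$ in the $2$-holed torus $\Sigma_D$ carries $\partial E_2$ to $\partial \widehat{E}_2$ while preserving $\partial E_1$, and --- by the argument in the proof of Lemma \ref{lem:transitivity on reducing spheres} --- every such pushing map extends to a slide of a foot of a handle of each of $V$ and $W$, hence lies in $\mathcal{G}$; conjugating the first element by the second finishes the proof. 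You instead construct the interchanging element directly for the \emph{given} pair: you normalize the configuration on the Heegaard torus $T$ of the lens-space summand so that it is symmetric under the half-rotation $\rho$, and use the key observation that $\rho$, being isotopic to the identity, preserves the meridian slopes of \emph{both} solid tori (this is exactly the point where a reflection would fail, as you correctly note), so it extends over both sides and, after the flip across the $S^2\times S^1$-handle, gives an element of $\mathcal{G}$. Your approach buys self-containedness and makes explicit the symmetry that the paper leaves to the reader, at the cost of the regluing/flip step, which you rightly flag as the delicate point (one must check the two end maps on the splitting sphere lie in the same component of the space of piece-preserving homeomorphisms --- an orientation bookkeeping exercise that does work out). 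The paper's approach buys brevity by reusing the pushing-map machinery already established.

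One genuine error, though it lies outside the statement being proved: your closing claim that a meridian disk $E'$ of $W_{D'}$ can be ``placed symmetrically with respect to $\rho$'' and is then preserved by $\sigma$ is false for the translation $\rho$. Since $\rho$ interchanges the two annuli of $T$ cut off by $\partial E'\cup\rho(\partial E')$ and also interchanges the two feet $d^\pm$, the feet end up one in each of those annuli; consequently $\rho(\partial E')$ is never isotopic to $\partial E'$ in $\Sigma_D$, and $\sigma(E')$ is not isotopic to $E'$ in $W$ --- your $\sigma$ moves every such $E'$ to a neighboring, non-isotopic semi-primitive disk. This matters because the paper's fixed notation demands an order-two element that interchanges $E_1, E_2$ \emph{and preserves} $E'$, and cites this lemma's proof for its existence; your construction as written does not supply that element. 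The fix is to replace the translation by an elliptic-involution-type map such as $(x,y)\mapsto(-x,\,1/2-y)$: it acts by $-I$ on $H_1(T)$, hence also preserves every slope and extends over both solid tori, it can be arranged to fix a curve of the $W_{D'}$-meridian slope setwise while swapping $\partial E_1\leftrightarrow\partial E_2$ and $d^+\leftrightarrow d^-$, and the rest of your argument goes through verbatim.
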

\begin{proof}
It is easy to see that
there exists a non-reducing semi-primitive disk $\widehat{E}_2$ in $V$ such that
$E_1$ and $\widehat{E}_2$ can be interchanged by an element of $\mathcal{G}$.
Thus it suffices to show that there exists an element of $\mathcal{G}$
that preserves $E_1$ and that maps $E_2$ to $\widehat{E}_2$.
Let $\Sigma_D$ be a 2-holed torus obtained by cutting $\Sigma$
along $\partial D$. We denote the boundary circles of $\Sigma_D$
by $d^+$ and $d^-$. 	
Since both $E_2$ and $\widehat{E}_2$ are meridian disks of
the solid torus obtained by cutting
$V$ along $D$,
there exists a pushing of $d^+$ in $\Sigma_D$ that preserve $\partial E_1$,
and that maps $\partial E_2$ to $\partial \widehat{E}_2$.
As we have seen in Lemma \ref{lem:transitivity on reducing spheres},
every pushing map of $d^+$ extends to a slide of a foot of a handle of each of $V$ and $W$,
thus it extends to a homeomorphism of $(S^2 \times S^1) \# L(p, q)$ that preserves $V$.
\end{proof}

Finally, the following two lemmas follow from
Lemmas \ref{lem:disjoint semi-primitive disks and reducing spheres} and
\ref{lem:stabilizer of a semi-primitive disk}.

\begin{lemma}
\label{lem:stabilizers of semi-primitive pairs}
Let $(V, W; \Sigma)$ be a genus two Heegaard splitting for the
connected sum of two lens spaces.
\begin{enumerate}
\item
If $(V, W; \Sigma)$ is not symmetric, then
$\mathcal{G}_{\{ D, E \}} = \mathcal{G}_{\{ D \cup E \}} =
\langle \alpha \mid \alpha^2 \rangle \oplus \langle \beta \rangle$ .
\item
If $(V, W; \Sigma)$ is symmetric, then
$\mathcal{G}_{\{ D, E \}} =
\langle \alpha \mid \alpha^2 \rangle \oplus \langle \beta \rangle$
and
$\mathcal{G}_{\{ D \cup E \}} =
\langle \alpha \mid \alpha^2 \rangle \oplus \langle \beta , \delta \mid \delta^2,
\delta \beta \delta = \alpha \beta \rangle$.
\end{enumerate}
\end{lemma}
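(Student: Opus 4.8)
The plan is to reduce both $\mathcal{G}_{\{D,E\}}$ and $\mathcal{G}_{\{D\cup E\}}$ to stabilizers of the Haken sphere $P$, and then to read off the group structure from Lemmas \ref{lem:disjoint semi-primitive disks and reducing spheres} and \ref{lem:stabilizer of a semi-primitive disk}. First I would observe that, by Lemma \ref{lem:disjoint semi-primitive disks and reducing spheres}, $P$ is the \emph{unique} Haken sphere disjoint from $D\cup E$, so every element of $\mathcal{G}_{\{D\cup E\}}$, a fortiori of $\mathcal{G}_{\{D,E\}}$, preserves $P$. Writing $V_1,V_2$ for the two solid tori into which $P\cap V$ cuts $V$, the disks $E$ and $D$ are the meridian disks of $V_1$ and $V_2$, each unique up to isotopy in its own solid torus. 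Hence an element preserving $P$ automatically preserves the unordered pair $\{D,E\}$, while one preserving $P$ together with $D$ must fix the side $V_2$, hence $V_1$, hence $E$. This gives the two identifications $\mathcal{G}_{\{D,E\}}=\mathcal{G}_{\{D,P\}}$ and $\mathcal{G}_{\{D\cup E\}}=\mathcal{G}_{\{P\}}$, together with an exact sequence
\[
1 \longrightarrow \mathcal{G}_{\{D,E\}} \longrightarrow \mathcal{G}_{\{D\cup E\}} \longrightarrow \mathbb{Z}/2
\]
whose last arrow records whether an element swaps the two sides of $P$, equivalently whether it restricts to an orientation-reversing map of $P$.

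For $\mathcal{G}_{\{D,E\}}$ I would then invoke Lemma \ref{lem:stabilizer of a semi-primitive disk}: the group $\mathcal{G}_{\{D,P\}}$ is exactly the vertex group of $P$ in the amalgam $\mathcal{G}_{\{D\}}=\mathcal{G}_{\{D,P\}}*_{\mathcal{G}_{\{D,P,Q_1\}}}\mathcal{G}_{\{D,P\cup Q_1\}}$ appearing in its proof, namely $\langle\alpha\mid\alpha^2\rangle\oplus\langle\beta\rangle$ (one may equally cite Lemma \ref{lem:stabilizer of a reducing sphere}(1) directly). This computes $\mathcal{G}_{\{D,E\}}=\langle\alpha\mid\alpha^2\rangle\oplus\langle\beta\rangle$ in both cases, and in particular settles the first equality of (1) and the first assertion of (2); note that $\gamma_1$ is excluded because it interchanges $P$ and $Q_1$ and so moves $E$.

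It remains to analyze the image of the displayed $\mathbb{Z}/2$. By the definition of symmetry together with the transitivity in Lemma \ref{lem:transitivity on reducing spheres}, a side-swapping element exists if and only if $(V,W;\Sigma)$ is symmetric. In the non-symmetric case the image is trivial, so $\mathcal{G}_{\{D\cup E\}}=\mathcal{G}_{\{D,E\}}=\langle\alpha\mid\alpha^2\rangle\oplus\langle\beta\rangle$, which is (1). In the symmetric case the reversing element $\delta$ of order two swaps $V_1$ and $V_2$, hence swaps $D$ and $E$, so $\delta$ splits the sequence and $\mathcal{G}_{\{D\cup E\}}=\langle\alpha,\beta,\delta\rangle$. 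Since $\alpha$ is the hyperelliptic involution it is central, and a counting argument (the subgroup $\langle\alpha,\beta\rangle\cong\mathbb{Z}/2\times\mathbb{Z}$ has index two both in the abstract group of the claimed presentation and in $\mathcal{G}_{\{D\cup E\}}$) reduces everything to identifying the single conjugate $\delta\beta\delta$.

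The main obstacle is precisely this last relation. I would first note that $\delta\beta\delta$ fixes each of $D$ and $E$ (as $\delta$ swaps them and $\beta$ preserves each), so $\delta\beta\delta\in\mathcal{G}_{\{D,E\}}=\langle\alpha\rangle\oplus\langle\beta\rangle$ and thus equals $\alpha^{i}\beta^{j}$ for some $i\in\{0,1\}$ and $j\in\mathbb{Z}$. Because $\delta$ preserves $V$ and the orientation of $M$, its restriction to $\Sigma$ preserves orientation and fixes the curve $\partial(P\cap V)$; hence $\delta$ commutes with the full twist $\beta^{2}=\tau_{\partial(P\cap V)}$, and squaring $\delta\beta\delta=\alpha^{i}\beta^{j}$ forces $\beta^{2j}=\beta^{2}$, i.e. $j=1$. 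To pin down $i$ I would track the action of $\delta$ and $\beta$ on the four-holed sphere $\Sigma'$ obtained by cutting $\Sigma$ along $\partial D\cup\partial E'$, and on suitable arcs therein, exactly as in the computations of Lemmas \ref{lem:two reducing spheres and transitivity} and \ref{lem:stabilizer of a pair of reducing spheres}; the decisive point is that $\beta$ is only a \emph{half} twist, so that its two square roots of $\beta^{2}$ in $\langle\alpha\rangle\oplus\langle\beta\rangle$ are $\beta$ and $\alpha\beta$, and the side-reversal $\delta$ carries $\beta$ to the \emph{other} one, the discrepancy being exactly the involution $\alpha$. Exhibiting an arc on which $\delta\beta\delta$ and $\beta$ act differently then gives $\delta\beta\delta=\alpha\beta$, and hence the presentation $\langle\alpha\mid\alpha^2\rangle\oplus\langle\beta,\delta\mid\delta^2,\delta\beta\delta=\alpha\beta\rangle$ asserted in (2).
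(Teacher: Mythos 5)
Your proposal is correct and is essentially the paper's own argument: the paper proves this lemma with a one-line citation of Lemma \ref{lem:disjoint semi-primitive disks and reducing spheres} and Lemma \ref{lem:stabilizer of a semi-primitive disk}, which is exactly your reduction --- uniqueness of the Haken sphere $P$ disjoint from $D \cup E$ identifies $\mathcal{G}_{\{D,E\}}$ with $\mathcal{G}_{\{D,P\}}$ and $\mathcal{G}_{\{D\cup E\}}$ with $\mathcal{G}_{\{P\}}$, after which Lemma \ref{lem:stabilizer of a reducing sphere}(1) and the symmetric/non-symmetric dichotomy (via the definition of reversibility and Lemma \ref{lem:transitivity on reducing spheres}) finish the computation. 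In fact you supply more than the paper at the only delicate point, the relation $\delta\beta\delta=\alpha\beta$, which the paper asserts with no explicit verification: your argument that $\delta\beta\delta$ lies in $\{\beta,\alpha\beta\}$ (it fixes both $D$ and $E$, $\alpha$ is central, $\delta$ commutes with the full twist $\beta^{2}$, and $\beta$ has infinite order) is correct and is a genuine addition. The one step you leave unexecuted --- exhibiting an arc on which $\delta\beta\delta$ and $\beta$ act differently --- is really needed to rule out $\delta\beta\delta=\beta$, but it can be finished more cheaply than by arc-tracking: use the action on $H_{1}(\Sigma)$. The separating curve $P\cap\Sigma$ splits $H_{1}(\Sigma)$ into the two rank-two summands carried by the two sides of $P$; since the twisting annulus is a neighborhood of a null-homologous curve and the extension of the half twist over one side of $P$ is the elliptic involution of that side, $\beta$ acts as $\mathrm{id}$ on one summand and as $-\mathrm{id}$ on the other, while $\alpha$ acts as $-\mathrm{id}$ on both and $\delta$ interchanges the two summands. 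Hence $(\delta\beta\delta)_{*}=(\alpha\beta)_{*}\neq\beta_{*}$ on $H_{1}(\Sigma)$, and combined with your dichotomy this forces $\delta\beta\delta=\alpha\beta$, completing part (2).
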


\begin{lemma}
\label{lem:stabilizers of non-separating pairs}
Let $(V, W; \Sigma)$ be the genus two Heegaard splitting for the
connected sum of $S^2 \times S^1$ and a lens space.
Then $\mathcal{G}_{\{ D, E_1, E_2 \}} =
\langle \alpha \mid \alpha^2 \rangle \oplus \langle \tau \rangle$
and
$\mathcal{G}_{\{ D, E_1 \cup E_2 \}} =
\langle \alpha \mid \alpha^2 \rangle \oplus \langle \sigma \mid \sigma^2 \rangle
\oplus \langle \tau \rangle$.
\end{lemma}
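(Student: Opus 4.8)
The plan is to read both equalities off the vertex stabilizer already determined in Lemma~\ref{lem:stabilizer of a semi-primitive disk}(2), interpreting $\mathcal{G}_{\{D,E_1,E_2\}}$ as the pointwise stabilizer and $\mathcal{G}_{\{D,E_1\cup E_2\}}$ as the setwise stabilizer of the edge $\{E_1,E_2\}$ of the tree $T=\mathcal{SP}_D(V)$ from Theorem~\ref{thm:contractibility for lens and lens}(2). Since preserving $D$, $E_1$ and $E_2$ is the same as preserving $E_2$ together with the pair $\{D,E_1\}$, I would first write $\mathcal{G}_{\{D,E_1,E_2\}}=\mathcal{G}_{\{D,E_1\}}\cap\mathcal{G}_{\{E_2\}}$ and record the inclusion $\langle\alpha\mid\alpha^2\rangle\oplus\langle\tau\rangle\subseteq\mathcal{G}_{\{D,E_1,E_2\}}$: the hyperelliptic involution $\alpha$ fixes every disk, and $\tau$, being supported near $D$, fixes every disk disjoint from $D$, hence fixes $E_1$ and $E_2$ by Lemma~\ref{lem:disjoint}; moreover $\alpha$ is central and $\tau$ commutes with every element preserving $D$ (as observed in the proof of Lemma~\ref{lem:stabilizer of a pair of reducing spheres}), so these two elements generate the asserted $\langle\alpha\mid\alpha^2\rangle\oplus\langle\tau\rangle$.

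For the reverse inclusion I would take $g\in\mathcal{G}_{\{D,E_1,E_2\}}$ and use Lemma~\ref{lem:stabilizer of a semi-primitive disk}(2) to write $g=\alpha^{a}\tau^{b}w$ with $w\in\langle\beta,\gamma\mid\gamma^{2}\rangle$; as $\alpha^{a}\tau^{b}$ already fixes $E_2$, it remains to show that no nontrivial $w$ fixes $E_2$. The elements $\alpha$ and $\tau$ act trivially on $T$, so $\langle\beta,\gamma\rangle$ maps onto the action of $\mathcal{G}_{\{D,E_1\}}$ on the link of the vertex $E_1$, and $E_2$ is one of its neighbours; the claim is exactly that the stabilizer of $E_2$ under this action is trivial. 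I would verify this by the explicit model underlying Theorem~\ref{cor:contractibility of sphere complexes}: cutting $\Sigma$ along $\partial D\cup\partial E_1$ presents the semi-primitive disks disjoint from $E_1$ through their slopes in the resulting four-holed sphere, with $\beta$ acting as a half-twist and $\gamma$ as the order-two interchange of two Haken spheres, and one checks that every nontrivial word in $\beta$ and $\gamma$ moves the slope recording $E_2$. This is the main obstacle, being the one point that uses the concrete geometry of $\beta$ and $\gamma$ rather than their abstract relations; granting it, $w=1$ and $\mathcal{G}_{\{D,E_1,E_2\}}=\langle\alpha\mid\alpha^2\rangle\oplus\langle\tau\rangle$. (Equivalently one may run a direct cut-and-Alexander argument after noting, as in Lemma~\ref{lem:uniqueness of a disjoint semi-primitive disk}, that $E_1$ being non-reducing forces $E'$ to be the unique non-separating disk in $W$ disjoint from $E_1$, hence also from $E_2$, so that $g$ preserves the whole system $D,D',E_1,E_2,E'$, and only the twist $\tau$ survives the Alexander reduction.)

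Finally, for $\mathcal{G}_{\{D,E_1\cup E_2\}}$ I would note that each of its elements either fixes both $E_1$ and $E_2$ or interchanges them, so $\mathcal{G}_{\{D,E_1,E_2\}}$ sits inside it with index at most two. The element $\sigma$ of Lemma~\ref{lem:semi-primitive disks are interchangible}, which has order two, preserves $E'$ and interchanges $E_1$ and $E_2$, realizes the nontrivial coset; hence the index is exactly two and $\mathcal{G}_{\{D,E_1\cup E_2\}}$ is generated by $\mathcal{G}_{\{D,E_1,E_2\}}$ together with $\sigma$. Since $\sigma$ preserves $D$ it commutes with $\tau$, and it commutes with the central element $\alpha$; combined with $\sigma^{2}=1$ this makes the extension split as a direct sum, giving $\mathcal{G}_{\{D,E_1\cup E_2\}}=\langle\alpha\mid\alpha^2\rangle\oplus\langle\sigma\mid\sigma^2\rangle\oplus\langle\tau\rangle$, as required.
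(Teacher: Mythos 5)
Your proposal has a genuine gap, and it sits exactly where you flagged it. The paper gives no real argument for this lemma (it is asserted to follow from Lemmas \ref{lem:disjoint semi-primitive disks and reducing spheres} and \ref{lem:stabilizer of a semi-primitive disk}), so a blind proof must supply the hard step, namely that no nontrivial element of $\langle\beta,\gamma\rangle$ fixes $E_2$. Your reduction $g=\alpha^{a}\tau^{b}w$ is correct, your easy inclusion is correct, and the index-two argument with $\sigma$ for $\mathcal{G}_{\{D,E_1\cup E_2\}}$ is routine; but the one claim carrying all the content (``every nontrivial word in $\beta$ and $\gamma$ moves the slope recording $E_2$'') is only asserted, with ``granting it'' doing the work of a proof.

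Worse, that claim is not something one can ``check'': there is a concrete element it must rule out, and it cannot. Since $E_2$ is disjoint from $D\cup E_1$, the curves $\partial D$, $\partial E_1$, $\partial E_2$ bound a pair of pants $\Pi$ in $\Sigma$. Cap the boundary component $\partial D$ of $\Pi$ with the disk $D'\subset W$ (here it is essential that $D$ is reducing, the feature special to the $S^2\times S^1$ summand and absent in the two-lens-space case of Lemma \ref{lem:stabilizers of semi-primitive pairs}), and push the interior of $\Pi\cup D'$ into the interior of $W$: this is a properly embedded annulus in $W$ with boundary $\partial E_1\cup\partial E_2$. The twist along this annulus restricts to $\Sigma$ as $\tau_{\partial E_2}\circ\tau_{\partial E_1}^{-1}$, so this mapping class extends to $W$; it also extends to $V$, being a product of twists along the disks $E_2$ and $E_1$. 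Hence $\tau_{\partial E_2}\circ\tau_{\partial E_1}^{-1}$ lies in $\mathcal{G}$ and preserves each of $D$, $E_1$, $E_2$; yet it moves $P$ (the curve $P\cap\Sigma$ is disjoint from $\partial E_1$ but meets $\partial E_2$ essentially), so it is not in $\langle\alpha\rangle\oplus\langle\tau\rangle$, and by Lemma \ref{lem:stabilizer of a semi-primitive disk}(2) it equals, modulo $\alpha$ and $\tau$, a nontrivial word in $\beta,\gamma$ fixing $E_2$. This defeats your central claim, and it defeats the parenthetical alternative at the same point: after the Alexander reduction what remains is precisely the group of multitwists $\tau_{\partial D}^{a}\tau_{\partial E_1}^{b}\tau_{\partial E_2}^{c}$ that extend to $W$, and the annulus twist shows this is strictly larger than $\langle\tau\rangle$. (That alternative also misidentifies the relevant disk: the unique non-separating disk of $W$ disjoint from $E_1$ is $D'$, not $E'$; in the paper's notation $E'$ meets $E_1$ and $E_2$.) So your proof cannot be completed as written; any correct treatment of this lemma --- something the paper's one-line citation does not provide either --- must dispose of this annulus twist.
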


\begin{proof}[Proof of Theorem $\ref{thm:presentations of the Goeritz groups}$]
(\ref{thm:lens and lens not reversible})
By Theorem \ref{thm:contractibility for lens and lens},
$\mathcal{SP}(V)$ is a tree.
By Lemmas \ref{lem:transitivity on reducing spheres}, 
the vertices modulo the action of $\mathcal{G}$ consists of
two classes, one contains $D$ and the other contains $E$.
Also, any edge of $\mathcal{SP}(V)$ is equal to
the edge $\{ D , E \}$ modulo the action of $\mathcal{G}$.
Therefore the quotient of $\mathcal{SP}(V)$ by the action
of $\mathcal{G}$ is an edge.
Now by the Bass-Serre theory and
Lemmas \ref{lem:stabilizer of a semi-primitive disk}
and \ref{lem:stabilizers of semi-primitive pairs}, we have
\begin{align*}
\mathcal{G} &=
\mathcal{G}_{\{D\}} *_{\mathcal{G}_{\{D,E\}}} \mathcal{G}_{\{E\}} \\
&= (\mathcal{G}_{\{ D, P \}} *_{\mathcal{G}_{\{D, P, Q_1 \}}}
\mathcal{G}_{\{D, P \cup Q_1\}} )
*_{\mathcal{G}_{\{D, P\}}}
(\mathcal{G}_{\{ E, P \}} *_{\mathcal{G}_{\{D, E, Q_2 \}}}
\mathcal{G}_{\{E, P \cup Q_2\}} )\\
&= (\langle \alpha \mid \alpha^2 \rangle \oplus
\langle \beta, \gamma_1 \mid {\gamma_1}^2 \rangle )
*_{\langle \alpha \mid \alpha^2 \rangle \oplus
\langle \beta \rangle}
(\langle \alpha \mid \alpha^2 \rangle \oplus
\langle \beta, \gamma_2 \mid {\gamma_2}^2 \rangle ) \\
&= \langle \alpha \mid \alpha^2 \rangle \oplus
\langle \beta, \gamma_1, \gamma_2
\mid {\gamma_1}^2, {\gamma_2}^2 \rangle .
\end{align*}

\noindent (\ref{thm:lens and lens reversible})
Again by Theorem \ref{thm:contractibility for lens and lens},
$\mathcal{SP}(V)$ is a tree.
Let $\mathcal{SP}'(V)$ be the first barycentric subdivision of
$\mathcal{SP}(V)$.
We note that the vertices of $\mathcal{SP}'(V)$ consists of
the vertices of $\mathcal{SP}(V)$ and
the barycenters of the edges of $\mathcal{SP}(V)$, each of which
corresponds to an unordered pair of vertices.
By Lemmas \ref{lem:transitivity on reducing spheres}, 
every vertex of $\mathcal{SP}'(V)$ is equal to the vertex
$D$ or the barycenter $\{D, E\}$, and
any edge of $\mathcal{SP}'(V)$ is equal to
the edge $\{D, \{ D , E \}\}$ modulo the action of $\mathcal{G}$.
Therefore the quotient of $\mathcal{SP}'(V)$ by the action
of $\mathcal{G}$ is an edge.
By the Bass-Serre theory and
Lemmas \ref{lem:stabilizer of a semi-primitive disk}
and \ref{lem:stabilizers of semi-primitive pairs}, we have
\begin{align*}
\mathcal{G} &=
\mathcal{G}_{\{D\}} *_{\mathcal{G}_{\{D,E\}}} \mathcal{G}_{\{D \cup E\}} \\
&= (\mathcal{G}_{\{ D, P \}} *_{\mathcal{G}_{\{D, P, Q_1 \}}}
\mathcal{G}_{\{D, P \cup Q_1\}} )
*_{\mathcal{G}_{\{D, P\}}}
(\mathcal{G}_{\{ D \cup E \}}  )\\
&= (\langle \alpha \mid \alpha^2 \rangle \oplus
\langle \beta, \gamma_1 \mid {\gamma_1}^2 \rangle )
*_{\langle \alpha \mid \alpha^2 \rangle \oplus
\langle \beta \rangle}
(\langle \alpha \mid \alpha^2 \rangle \oplus \langle \beta , \delta \mid \delta^2,
\delta \beta \delta = \alpha \beta \rangle) \\
&= \langle \alpha \mid \alpha^2 \rangle \oplus
\langle \beta, \gamma_1, \delta
\mid {\gamma_1}^2, \delta^2, \delta \beta \delta = \alpha \beta \rangle .
\end{align*}

\noindent (\ref{thm:lens and S2 S1})
We note that $\mathcal{G} = \mathcal{G}_{\{D\}}$.
Let $\mathcal{SP}_D'(V)$ be the first barycentric subdivision of
$\mathcal{SP}_D(V)$.
By Lemma \ref{lem:semi-primitive disks are interchangible},
the quotient of $\mathcal{SP}_D'(V)$ by the action
of $\mathcal{G}$ consists of an edge.
By the Bass-Serre theory and
Lemmas \ref{lem:stabilizer of a semi-primitive disk}
and \ref{lem:stabilizers of semi-primitive pairs}, we have
\begin{align*}
\mathcal{G}_{\{D\}}
&=
\mathcal{G}_{\{D, E_1\}} *_{\mathcal{G}_{\{D, E_1, E_2\}}}
\mathcal{G}_{\{D, E_1 \cup E_2\}} \\
&= (\langle \alpha \mid \alpha^2 \rangle \oplus \langle \beta, \gamma \mid \gamma^2 \rangle
\oplus \langle \tau \rangle)
*_{\langle \alpha \mid \alpha^2 \rangle \oplus \langle \tau \rangle}
(\langle \alpha \mid \alpha^2 \rangle \oplus \langle \sigma \mid \sigma^2 \rangle
\oplus \langle \tau \rangle) \\
&= \langle \alpha \mid \alpha^2 \rangle \oplus \langle \beta, \gamma, \sigma \mid \gamma^2, \sigma^2 \rangle
\oplus \langle \tau \rangle.
\end{align*}
This completes the proof.
\end{proof}

\noindent {\bf Acknowledgments.}
This work was carried out while the second-named author was visiting
Universit\`a di Pisa as a
JSPS Postdoctoral Fellow for Reserch Abroad.
He is grateful to the university and its staffs for
the warm hospitality.
The authors are grateful to the anonymous referee
for his or her very helpful comments that improved the presentation.

\end{document}